\newtheorem{Theorem}{Theorem}[section]
\newtheorem{Definition}[Theorem]{Definition}
\newtheorem{Proposition}[Theorem]{Proposition}
\newtheorem{Lemma}[Theorem]{Lemma}
\newtheorem{Corollary}[Theorem]{Corollary}
\newtheorem{Remark}[Theorem]{Remark}
\newtheorem{Example}[Theorem]{Example}
\begin{document}

\title{Deforming reducible representations of surface and 2-orbifold groups}

\author{Joan Porti\footnote{Partially supported by 
FEDER-AEI (grant numbers PID2021-125625NB-100 and
Mar\'\i a de Maeztu Program CEX2020-001084-M)
}} 

\date{\today}
\maketitle

\begin{abstract}
For a compact 2-orbifold with negative Euler characteristic $\mathcal O^2$, 
the variety of characters of $\pi_1(\mathcal O^2)$ in $\mathrm{SL}_{n}(\mathbb R)$
is a non-singular manifold at  $\mathbb C$-irreducible representations. In this paper we prove that when 
a $\mathbb C$-irreducible representation of $\pi_1(\mathcal O^2)$ in $\mathrm{SL}_{n}(\mathbb R)$ is viewed
in  $\mathrm{SL}_{n+1}(\mathbb R)$, then the variety of characters is singular,
and we describe the singularity. 
\end{abstract}

\tableofcontents
 
\section{Introduction}
\label{Section:introduction}

Let $\mathcal O^2$ be a compact 2-orbifold with $\chi(\mathcal O^2)<0$, 
possibly with boundary.
The group $
\mathrm{PSL}_n(\mathbb R) $ acts by conjugation 
algebraically on the variety of representations
of $\pi_1(\mathcal O^2)$ in $\mathrm{SL}_{n}(\mathbb R)$, and its quotient in 
real invariant theory is called the variety of characters
$$X( \mathcal O^2, \mathrm{SL}_{n}(\mathbb R))
=\hom( \pi_1(\mathcal O^2) , \mathrm{SL}_{n}(\mathbb R) ) /\!/  \mathrm{PSL}_n(\mathbb R). 
$$
Along the paper we shall assume $n\geq 2$.

A representation $\pi_1(\mathcal O^2)\to \mathrm{SL}_{n}(\mathbb R)$ is called 
$\mathbb C$-irreducible if it has no proper invariant subspace 
 in $\mathbb C^n$. 
By \cite{Goldman}
the character of a $\mathbb C$-irreducible representation 
has a smooth (non-singular) neighborhood in the variety of characters 
$X( \mathcal O^2, \mathrm{SL}_{n}(\mathbb R))
$.
In this paper we show that its composition with the
standard inclusion in $\mathrm{SL}_{n+1}(\mathbb R)$ 
has a character topologically singular in
$
X( \mathcal O^2, \mathrm{SL}_{n+1}(\mathbb R))
$, and we describe the singularity.

For a  
 $\mathbb C$-irreducible 
representation $\rho\colon\pi_1(\mathcal O^2)\to \mathrm{SL}_{n}(\mathbb R)$, let $ \mathbb R^n_\rho$ denote 
$ \mathbb R^n$ as 
 $\pi_1(\mathcal O^2)$-module via $\rho$ and  
  set  $d=\dim(H^1(\mathcal O^2,\mathbb R^n_\rho))$.
View also $\rho$ as a representation in  $ \mathrm{SL}_{n+1}(\mathbb R)$, by composing it with the standard inclusion 
$ \mathrm{SL}_{n}(\mathbb R)\subset \mathrm{SL}_{n+1}(\mathbb R)$ and so that it is no longer 
$\mathbb C$-irreducible.

\begin{Theorem}
 \label{Theorem:orientable}
Let $\mathcal O^2$ be a compact and orientable 2-orbifold, satisfying $\chi(\mathcal O^2)<0$. 
Let  $\chi_\rho\in X( \mathcal O^2, \mathrm{SL}_{n}(\mathbb R) )$  
be the character of a $\mathbb C$-irreducible representation  $ \rho$,
$d=  \dim(H^1(\mathcal O^2,\mathbb R^n_\rho))$, and $b= \dim(H^1(\mathcal O^2,\mathbb R))$ the 
first Betti number. 
A neighborhood of  $\chi_\rho$ in  
 $ X(\mathcal O^2, \mathrm{SL}_{n+1}(\mathbb R))$ is
 homeomorphic to
 $$
  \mathbb{R}^p\times \mathbb R^b\times  \mathrm{Cone}(X),
 $$
 where $X$ is as in Table~\ref{Table:Table1}
and
$\mathbb R^p\times \{0\}\times\{0\}$ corresponds to
a smooth neighborhood in the variety of characters
 $X(\mathcal O^2, \mathrm{SL}_{n}(\mathbb R) ) $.
 \end{Theorem}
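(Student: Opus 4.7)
The plan is to identify the germ of $X(\mathcal{O}^2,\mathrm{SL}_{n+1}(\mathbb{R}))$ at $\chi_\rho$ with the GIT quotient of a quadratic cone in the Zariski tangent space, using a Luna slice for the $\mathrm{PSL}_{n+1}(\mathbb{R})$-action together with obstruction theory in the spirit of Goldman--Millson.

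First I decompose the adjoint module. Under the block inclusion $\mathrm{SL}_n \subset \mathrm{SL}_{n+1}$,
$$\mathfrak{sl}_{n+1} \;\cong\; \mathfrak{sl}_n \oplus \mathbb{R} \oplus \mathbb{R}^n_\rho \oplus (\mathbb{R}^n_\rho)^*,$$
where the trivial $\mathbb{R}$ is spanned by $\mathrm{diag}(1,\ldots,1,-n)$ and the two off-diagonal summands occupy the last column and last row. Applying $H^1(\pi_1(\mathcal{O}^2),\cdot)$ gives
$$H^1(\mathfrak{sl}_n) \oplus H^1(\mathbb{R}) \oplus H^1(\mathbb{R}^n_\rho) \oplus H^1((\mathbb{R}^n_\rho)^*),$$
of dimensions $p$, $b$, $d$, $d$ respectively, the last equality via Poincar\'e--Lefschetz duality for the orientable $\mathcal{O}^2$. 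By the result of Goldman cited in the introduction, the first summand integrates to the smooth neighborhood of $\chi_\rho$ in $X(\mathcal{O}^2,\mathrm{SL}_n(\mathbb{R}))$, yielding the factor $\mathbb{R}^p$. The second summand integrates explicitly via
$$\rho_\varphi(\gamma) = \exp\!\bigl(\varphi(\gamma)\,\mathrm{diag}(1,\ldots,1,-n)\bigr)\rho(\gamma),\qquad \varphi \in \mathrm{Hom}(\pi_1(\mathcal{O}^2),\mathbb{R}),$$
and gives the smooth factor $\mathbb{R}^b$ transverse to $\mathbb{R}^p$.

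The remaining cohomology $H^1(\mathbb{R}^n_\rho) \oplus H^1((\mathbb{R}^n_\rho)^*) \cong \mathbb{R}^d \oplus \mathbb{R}^d$ carries all the obstructions. To quadratic order the Kuranishi map is the self-bracket $\xi \mapsto [\xi,\xi] \in H^2$; among the graded brackets of the four summands, the only one that cannot be absorbed into the smooth $\mathbb{R}^p$-direction by a coordinate change is the cup product
$$\smile\colon H^1(\mathbb{R}^n_\rho) \otimes H^1((\mathbb{R}^n_\rho)^*) \longrightarrow H^2(\mathfrak{sl}_n \oplus \mathbb{R}),$$
induced by the matrix product $v \otimes w \mapsto vw^\top$. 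A Luna slice through $\rho$ for the conjugation action then reduces the germ to the quotient of the cone $\{\xi_+ \smile \xi_- = 0\}$ by the stabilizer of $\rho$, which is the one-parameter subgroup $\{\mathrm{diag}(t,\ldots,t,t^{-n})\}$ acting with weights $(t^{n+1},t^{-(n+1)})$ on $(\xi_+,\xi_-)$. Perfectness of the pairing $\smile$ (again Poincar\'e--Lefschetz duality) identifies this quotient with a cone over the space $X$ of Table~\ref{Table:Table1}.

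The main obstacle I foresee is upgrading this formal quadratic model to the topological germ, that is, verifying that higher-order terms in the Kuranishi map do not alter the local structure up to homeomorphism. For 2-orbifold groups with $\chi<0$ the controlling DGLA has cohomology concentrated in degrees $\leq 2$, so $H^2$ is the sole home of obstructions; combined with $\mathbb{C}$-irreducibility of $\rho$ on the $\mathrm{SL}_n$ side, which forces the vanishing of the mixed cup products involving $H^1(\mathfrak{sl}_n)$, a convergence argument \emph{\`a la} Goldman--Millson should yield a real-analytic, hence topological, identification of the germ with the stated product and in particular pin down $\mathbb{R}^p \times \{0\} \times \{0\}$ as the smooth $\mathrm{SL}_n$-stratum.
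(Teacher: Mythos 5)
Your overall strategy mirrors the paper's: decompose $\mathfrak{sl}_{n+1}$ under $\mathrm{SL}_n$, compute $H^1$ blockwise, recognize the cup product into $H^2$ as the quadratic obstruction, and divide by the stabilizer of $\rho$ via a slice. However, there are two concrete gaps in the proposal that would prevent it from actually yielding Table~\ref{Table:Table1}.

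First, you treat the cup-product obstruction as uniformly present, but the theorem's two rows differ precisely because the obstruction behaves differently in the closed and boundary cases. When $\partial\mathcal{O}^2 \neq \emptyset$, the orbifold has the homotopy type (virtually) of a graph, so $H^2(\mathcal{O}^2,\mathfrak{g}) = 0$ and there is \emph{no} quadratic cone at all: $\hom(\pi_1\mathcal{O}^2, \mathrm{SL}_{n+1}(\mathbb{R}))$ is smooth at $\rho$ by Proposition~\ref{Proposition:Goldman}, and the entire singularity of $X$ comes from the GIT quotient of $\mathbb{R}^d\oplus\mathbb{R}^d$ by the $\mathbb{R}^*$-action with weights $(\pm(n+1))$, which is the cone $\{|x|^2 = |y|^2\}$, i.e.\ $\mathrm{Cone}(S^{d-1}\times S^{d-1})$. ``Perfectness of the pairing via Poincar\'e--Lefschetz duality'' fails here — the pairing is trivially zero — so your argument as stated only covers the closed case, where the extra condition $x\cdot y = 0$ from the cup product cuts the link down to $\mathrm{UT}(S^{d-1})$.

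Second, the stabilizer $G_\rho = \{\mathrm{diag}(t,\ldots,t,t^{-n}) : t\in\mathbb{R}^*\}$ is disconnected, and the homeomorphism type of the quotient depends on the parity of $n+1$: when $n+1$ is even, $\mathrm{diag}(-1,\ldots,-1)$ is central and acts trivially on $\mathbf{m}_{\mathsf c}\oplus\mathbf{m}_{\mathsf r}$, whereas when $n+1$ is odd, $\mathrm{diag}(-1,\ldots,-1,+1)$ acts as the antipodal map on each factor, producing the $\mathbb{RP}^{d-1}$ and $(S^{d-1}\times S^{d-1})/\!\sim$ entries in the table. Your proposal only invokes the identity component (``the one-parameter subgroup'') and the weights $(t^{n+1}, t^{-(n+1)})$, so it cannot produce the parity dichotomy that the statement asserts. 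Finally, your acknowledged worry about upgrading the formal quadratic model to a topological germ is exactly what the paper's Theorem~\ref{Theorem:Quadratic} (Goldman, Goldman--Millson, Simpson) resolves — the analytic germ of the representation variety at a semisimple point is \emph{analytically} equivalent to the quadratic cone, so no convergence issue remains; but you should cite this as a theorem rather than leave it as a hoped-for ``convergence argument.''
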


 \begin{table}[h!]
\centering \renewcommand{\arraystretch}{1.5}
 \begin{tabular}{|c|cc|}
 \hline $X$ & $n+1$ even & $n+1$ odd
  \\ \hline   $\mathcal O^2$ is closed &  $\mathrm{UT}( S^{d-1})$  &
  $\mathrm{UT}( \mathbb{RP}^{d-1})$ \\
   $\mathcal O^2$  has boundary &  $S^{d-1}\times S^{d-1}$ &
  $  ( S^{d-1}\times S^{d-1})/\!\sim\!$
   \\
    \hline
 \end{tabular}
\caption{The link $X$ of the singularity in Theorem~\ref{Theorem:orientable}.}
\label{Table:Table1}
 \end{table}

 Here $S^{d-1}\subset \mathbb R^d$ denotes the $(d-1)$-dimensional unit sphere; 
 $ \mathrm{UT} (S^{d-1})$, its unit tangent bundle
 $$
 \mathrm{UT} (S^{d-1})=\{(u, v)\in S^{d-1}\times S^{d-1}\subset \mathbb R^{2d}\mid u\cdot v=0 \};
 $$  
$  \mathrm{UT}( \mathbb{RP}^{d-1})$, the unit tangent bundle
of projective space; and $\sim$,  the involution of $S^{d-1}\times S^{d-1}$
acting as the antipodal   on each factor.
 The cone on a topological space $X$ is denoted by
$\mathrm{Cone}(X)$.
When $d=0$, we use the convention that 
$S^{-1}=\emptyset$ and 
$ \mathrm{Cone}( \emptyset)$ is a point.
%
 The factor $\mathbb R^b$ is realized by representations in the stabilizer of $\rho$ 
 in $\mathrm{SL}_{n+1}(\mathbb R) $, namely
 $\mathrm{diag}( \theta,\ldots,\theta, \theta^{-n})$ for a
 homomorphism $\theta\colon\Gamma\to \mathbb R^*$.
 The difference according to the parity of $n+1$
 may be explained by the fact that we take into account the action of
 stabilizer of $\rho$:
when $n+1$ is odd, the matrix
$\mathrm{diag}( -1,\ldots,-1, 1)$ stabilizes $\rho$ but it is
not central, though when $n+1$ is even,
$\mathrm{diag}( -1,\ldots,-1)$
  is central.

 \medskip

As motivation, when $n=3$ we consider projective structures on $\mathcal O^2$ modeled in $\mathbb P^2$,
and we view them as projective structures on $\mathcal O^2\times\mathbb R$ modeled in $\mathbb P^3$.
When $\mathcal O^2$ is closed and orientable,
the space of convex projective structures on $\mathcal O^2$ is a 
component of 
$X( \mathcal O^2, \mathrm{SL}_3(\mathbb R))$, homeomorphic to a 
cell, by Choi-Goldman \cite{ChoiGoldman}. When embedding $\mathrm{SL}_3(\mathbb R)$ in
$\mathrm{SL}_4(\mathbb R)$ by  Theorem~\ref{Theorem:orientable} we get a singularity
(with few exceptions for small orbifolds). 
More precisely (Corollary~\ref{Corollary:projective})
 a neighborhood of the character of its projective holonomy  in
 $ X(\mathcal O^2, \mathrm{SL}_4(\mathbb R))$ is
 homeomorphic to
 $$
 \begin{cases}
        \mathbb R^{p+b}\times  \mathrm{Cone}( \mathrm{UT}( S^{t-1})) & \textrm{if }\mathcal O^2 \textrm{ is closed}\\
         \mathbb R^{p+b}\times
         \mathrm{Cone}( S^{t-1}\times S^{t-1}) & \textrm{if }\mathcal O^2 \textrm{ has boundary}
        \end{cases}
 $$ where $t$ is the dimension of its Teichm\"uller space  and
 $p$ is the dimension of its Choi-Goldman's space of convex projective structures.
 In particular (Corollary~\ref{Corollary:turnover}) for a turnover $t=b=0$, so every deformation in
  $\mathrm{SL}_4(\mathbb R)$ of its projective holonomy is
  conjugate to a deformation in  $\mathrm{SL}_3(\mathbb R)$, as the cone of the empty set is a point.



The theorem applies also to the Barbot component \cite{Barbot}.
Given a discrete and faithful representation
  of the fundamental group of a surface $F_g$ of genus $g\geq 2$ in $
  \mathrm{SL}_2(\mathbb R)$, Barbot proves in \cite{Barbot} that
  the composition
  with the standard embedding in $\mathrm{SL}_3(\mathbb R)$ yields
    Anosov representations that do not lie in the  Hitchin
  component. The
  corresponding component in the character variety $X(F_g,\mathrm{SL}_3(\mathbb R))$ is singular by Theorem~\ref{Theorem:orientable}, and a neighborhood of singular points is homeomorphic to
  $
  \mathbb R^{8g-6}\times  \mathrm{Cone}( \mathrm{UT}( \mathbb{RP}^{4g-5}))
  $.

\medskip

At the end of the  paper we also discuss
the deformation space of projective structures on a hyperbolic 3-orbifold $\mathcal O^3$ of finite type.
For this purpose, the holonomy representation $\rho\colon\pi_1(\mathcal O^3)\to \mathrm{SO}(3,1)$
of the hyperbolic structure
is composed
with the standard inclusion $\mathrm{SO}(3,1)\to\mathrm{SL}_4(\mathbb R)$, yielding the
holonomy of the corresponding projective structure.
Thus a neighborhood of the character of $\rho$ in
the variety of characters
$X(\mathcal O^3, \mathrm{SL}_4(\mathbb R))$ yields the deformation space of this projective structure.

To put our result in context, we recall a theorem of
Ballas, Danciger, and Lee \cite[Theorem~3.2]{BDL}. According to their theorem, if a finite volume orbifold
$\mathcal O^3$ is infinitesimally rigid with respect to the boundary, then the character of its hyperbolic
holonomy is a smooth point of $X(\mathcal O^3, \mathrm{SL}_4(\mathbb R))$. Here
``infinitesimally rigid with respect to the boundary'' is a technical hypothesis on
cohomology on the Lie algebra (Definition~\ref{Definition:infinitesimal}),  it essentially means
that all infinitesimal deformations
of $\mathcal O^3$ come from its ends. We  prove the following theorem:

\begin{Theorem}
\label{Theorem:3dimIntro}
Let $\mathcal O^3$ be a compact orientable orbifold, with $\partial \mathcal O^3\neq \emptyset$ and hyperbolic interior
$\mathrm{int}( \mathcal O^3 )$ so  that it is not elementary, nor Fuchsian.
Assume that $\mathcal O^3$ is infinitesimally projectively rigid
with respect to the boundary.
Then
 the character of the
hyperbolic holonomy is a smooth point of
$X(\mathcal O^3, \mathrm{SL}_4(\mathbb R))$
if,  and only if,
all ends of
$\mathrm{int}( \mathcal O^3 )$ are either non-Fuchsian or turnovers. Furthermore, the singularity is quadratic.
 \end{Theorem}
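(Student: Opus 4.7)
The plan is to reduce the local structure of $X(\mathcal O^3, \mathrm{SL}_4(\mathbb R))$ at $\chi_\rho$ to that of the boundary character variety, and then apply Theorem~\ref{Theorem:orientable} componentwise. This separates the contribution of each end according to whether it is Fuchsian or not, and within the Fuchsian case according to whether the boundary 2-orbifold is a turnover.

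First I would unpack the infinitesimal projective rigidity hypothesis as injectivity of the restriction $H^1(\pi_1(\mathcal O^3), \mathfrak{sl}_4) \hookrightarrow H^1(\pi_1(\partial \mathcal O^3), \mathfrak{sl}_4)$ coming from the long exact sequence of the pair; by the hypothesis every infinitesimal deformation of $\rho$ is detected on the boundary. Invoking a Goldman--Millson / Kuranishi style analyticity (the germ of the character variety at $\chi_\rho$ is analytically isomorphic to the quadratic cone in $H^1$ cut out by the cup-product obstruction with values in $H^2$, modulo gauge), this infinitesimal injection promotes to a local homeomorphism between a neighborhood of $\chi_\rho$ in $X(\mathcal O^3, \mathrm{SL}_4(\mathbb R))$ and its image in $\prod_i X(\partial_i \mathcal O^3, \mathrm{SL}_4(\mathbb R))$, the image being cut out by the linear subspace $H^1(\mathcal O^3, \mathfrak{sl}_4) \subset H^1(\partial\mathcal O^3, \mathfrak{sl}_4)$ (half-dimensional by Poincar\'e--Lefschetz duality).

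Next I would analyze each closed orientable boundary 2-orbifold $\partial_i$. For a non-Fuchsian end, the restriction $\rho|_{\pi_1(\partial_i)}$ has no invariant proper subspace in $\mathbb R^4$ (an invariant point, line, or 3-plane would force either elementarity or Fuchsianness), so it is $\mathbb C$-irreducible and its character is smooth in $X(\partial_i, \mathrm{SL}_4(\mathbb R))$ by Goldman's theorem. For a Fuchsian end, $\rho|_{\pi_1(\partial_i)}$ preserves a totally geodesic plane $\mathbb H^2 \subset \mathbb H^3$, factoring through $\mathrm{SO}(2,1)\subset \mathrm{SO}(3,1)$; the standard 4-dimensional representation of $\mathrm{SO}(3,1)$ restricts on $\mathrm{SO}(2,1)$ to the 3-dimensional adjoint irrep plus a trivial line, so $\rho|_{\pi_1(\partial_i)}$ is a $\mathbb C$-irreducible representation in $\mathrm{SL}_3(\mathbb R)$ composed with the standard inclusion $\mathrm{SL}_3(\mathbb R) \hookrightarrow \mathrm{SL}_4(\mathbb R)$. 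This is exactly the setting of Theorem~\ref{Theorem:orientable} with $n=3$ and $n+1=4$ even, and the closed-orbifold row of Table~\ref{Table:Table1} gives the local model $\mathbb R^{p_i}\times \mathbb R^{b_i}\times \mathrm{Cone}(\mathrm{UT}(S^{d_i-1}))$ with $d_i=\dim H^1(\partial_i, \mathbb R^3_\rho)$.

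Finally, in the Fuchsian case $d_i$ equals the dimension of the Teichm\"uller space of $\partial_i$, which vanishes precisely when $\partial_i$ is a turnover: for a turnover $\mathrm{Cone}(\mathrm{UT}(S^{-1}))=\mathrm{Cone}(\emptyset)$ is a single point and the local model is smooth, while for any other Fuchsian end $d_i\geq 1$ and $\mathrm{Cone}(\mathrm{UT}(S^{d_i-1}))$ is the genuine quadratic singularity cut out in $\mathbb R^{2d_i}$ by $\|u\|^2=\|v\|^2$ and $u\cdot v=0$. Under the reduction of step one, the germ of $X(\mathcal O^3, \mathrm{SL}_4(\mathbb R))$ at $\chi_\rho$ is the intersection of a smooth linear factor with the product, over Fuchsian non-turnover ends, of these quadratic cones; a linear slice of a (product of) quadrics is still quadratic, so the singularity is quadratic, and it is trivial (smooth) if and only if every end is non-Fuchsian or a turnover. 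The main obstacle is step one: promoting the infinitesimal injection on $H^1$ to a genuine local homeomorphism of germs of character varieties, which requires identifying the Kuranishi cup-product obstruction on $\mathcal O^3$ with its restriction to the boundary (via naturality of the bracket in $\mathfrak{sl}_4$ under the long exact sequence), together with verifying that the linear slice by $H^1(\mathcal O^3, \mathfrak{sl}_4)$ does not kill the quadratic singularity of any Fuchsian non-turnover factor—this should follow from the Lagrangian/half-dimensional nature of the image combined with the explicit description of the quadric cut out on $\mathrm{UT}(S^{d_i-1})$.
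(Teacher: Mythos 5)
Your strategy—reduce the germ of $X(\mathcal O^3, \mathrm{SL}_4(\mathbb R))$ at $\chi_\rho$ to a linear slice of $\prod_i X(\partial_i\mathcal O^3, \mathrm{SL}_4(\mathbb R))$ and then read off the singularity componentwise from Theorem~\ref{Theorem:orientable}—is close in spirit to how the paper handles only the \emph{quadratic} part of the statement (Proposition~\ref{Proposition:Quadratic}), but it cannot carry all three conclusions as written. There are two gaps.

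The serious one is the ``only if'' direction (a Fuchsian non-turnover end forces a singularity). You reduce this to the claim that a Lagrangian (half-dimensional isotropic) linear slice of a product containing a quadric-cone factor is still singular, and say this ``should follow'' from the Lagrangian structure. It does not follow: a Lagrangian slice of a singular cone can be smooth. For instance, in $\mathbb R^2_{x,y}\oplus\mathbb R^2_{z,w}$ with the product symplectic form, the Lagrangian $L=\{(x,0,z,0)\}$ meets the singular hypersurface $\{x^2=y^2\}\times\mathbb R^2_{z,w}$ in the smooth line $\{(0,0,z,0)\}$. So the Lagrangian condition alone gives no control, and the conclusion you need is not established. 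The paper handles this direction in a completely different way (Propositions~\ref{Proposition:NFD} and \ref{Proposition:FuchsianSingular}, via Lemma~\ref{Lemma:dimZTF}): one perturbs the holonomy inside $\mathrm{Isom}^+(\mathbb H^3)$ so that the peripheral subgroups of non-turnover ends leave $\mathrm{SO}(2,1)$, and then uses the identity
$\dim T^{\mathrm{Zar}}_{[\rho]} X(\mathcal O^3,\mathrm{SL}_4(\mathbb R))=\tfrac12\sum_i \dim X(\partial_i\mathcal O^3,\mathrm{SL}_4(\mathbb R))+\mathsf f$
(where $\mathsf f$ counts Fuchsian, non-turnover ends) to show the dimension of the Zariski tangent space strictly drops under the perturbation; a point where it is locally maximal and strictly larger than nearby is singular. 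This dimension jump is exactly the $+\mathsf f$, which comes from the nontrivial stabilizer at the Fuchsian boundary pieces, and it is invisible in a purely linear-slice picture.

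The secondary gap is in your step one. You want to identify a neighborhood of $\chi_\rho$ in $X(\mathcal O^3,\mathrm{SL}_4(\mathbb R))$ with the intersection of the linear subspace $H^1(\mathcal O^3,\mathfrak{sl}_4)\subset H^1(\partial\mathcal O^3,\mathfrak{sl}_4)$ with $\prod_i X(\partial_i\mathcal O^3,\mathrm{SL}_4(\mathbb R))$. But these two sides do not live in the same ambient: $\rho$ is $\mathbb C$-irreducible on $\pi_1(\mathcal O^3)$ (Lemma~\ref{Lemma:03irr}), so its stabilizer is central and the slice maps bijectively to the character variety; whereas at a Fuchsian boundary component $\rho_i$ is reducible with a one-parameter stabilizer, so $X(\partial_i)$ near $\chi_{\rho_i}$ is a genuine GIT quotient of a subvariety of $H^1(\partial_i,\mathfrak{sl}_4)$—not a subset of it. Your restriction map of characters therefore factors through a further quotient on the target, and injectivity of $H^1(\mathcal O^3)\to H^1(\partial\mathcal O^3)$ does not immediately give a homeomorphism onto an image ``cut out by a linear subspace.'' The paper sidesteps this by performing the boundary comparison at the level of \emph{representation} varieties: Proposition~\ref{Proposition:Quadratic} builds, from the slice at $\rho$, an analytic immersion into $\prod_i \hom(\pi_1(\partial_i\mathcal O^3),\mathrm{SL}_4(\mathbb C))$ and shows its image equals the intersection of a smooth ambient with the boundary representation varieties; quadraticity then follows from Goldman--Millson on each $\partial_i$, with no GIT quotient on the target to worry about. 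If you want to salvage your approach, you would need to carry out the reduction at the representation-variety level and then, separately, supply an argument for singularity that actually uses the structure of the Lagrangian—e.g.\ by reproving something equivalent to Lemma~\ref{Lemma:dimZTF}.
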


We say that one end is \emph{Fuchsian} if its holonomy is a Fuchsian group: namely the end corresponding to a totally geodesic
boundary component.


 \medskip

Let us comment on the proof of Theorem~\ref{Theorem:orientable}.
Under the hypothesis of this theorem,
when $\partial\mathcal O^2\neq\emptyset $
the variety of representations (not of characters)
$\hom(\pi_1(\mathcal O^2), \mathrm{SL}(n+ 1,\mathbb R)) $ is smooth at~$\rho$.
The singularity occurs
in the variety of characters, when considering the real GIT quotient.
This singularity appears because $\rho$ is not irreducible, and
there is a one-parameter subgroup of $\mathrm{SL}_{n+1}(\mathbb R)$
that commutes with $\mathrm{SL}_{n}(\mathbb R)$. 
%
To understand this singularity  when $\partial\mathcal O^2\neq\emptyset $, notice that
$\mathbb R^n_\rho$ and $\mathbb R^n_{\rho^*}$  appear in the complement of
$\mathfrak{sl}(n,\mathbb R)$ in $\mathfrak{sl}(n+1,\mathbb R)$,
where 
$\rho^*(\gamma)=\rho(\gamma^{-1})^t$ is the contragredient representation. 
When $n+1$ is even the action of the commutator of $\mathrm{SL}_{n}(\mathbb R)$ on
$H^1(\mathcal O^2,\mathbb R^n_{\rho^*})\oplus H^1(\mathcal O^2,\mathbb R^n_\rho)$
is equivalent to the action of 
$\mathbb R-\{0\}$ on  $\mathbb R^d\times\mathbb R^d$
defined by 
$t\cdot (x,y)\mapsto (t^{n+1} x, t^{-n-1} y)$, for $t\in \mathbb R-\{0\}$ 
and $x, y\in \mathbb R^d$.
The quotient   $(\mathbb R^d\times\mathbb R^d)/\!/ \mathbb R_{>0} $ is homeomorphic to 
the cone $|x|^2=|y|^2$, 
$(x,y)\in \mathbb R^d\times\mathbb R^d$. When $n+1$ is odd, in addition  we 
take into account of the antipodal
on
$\mathbb R^n_{\rho^*}\oplus \mathbb R^n_\rho$.

When $ \mathcal O^2  $ is closed,  the variety of representations of 
$\hom(\pi_1(\mathcal O^2), \mathrm{SL}(n+ 1,\mathbb R)) $ has a quadratic singularity, by 
a theorem of Goldman \cite{GoldmanMaryland}
(see also Goldman and Millson  \cite{GoldmanMillson}  and Simpson \cite{Simpson}). Then both singularities
have to be combined, the quadratic singularity and the singularity from passing to the quotient.
Thus for $x, y\in \mathbb R^d$, we combine the equality $|x|^2=|y|^2$ with $x\cdot y=0$,
that yields the cone in the unit tangent bundle of the sphere  when $n+1$
is even, or its
quotient by the antipodal when $n+1$ is odd.

The paper also discusses non-orientable orbifolds, 
so we consider  the group
$$\mathrm{SL}^{\pm}_n(\mathbb R ) 
 =\{A\in\textrm{GL}_n(\mathbb R)\mid \det (A)=\pm 1\}.
 $$
In this case there are two natural extensions  of 
$ \mathrm{SL}^{\pm}_n(\mathbb R )$.  One extension is in  $\mathrm{SL}_{n+1}(\mathbb R)$, so that
every transformation of $\mathbb P^{n-1}$  extends to an
orientation preserving transformation of $\mathbb P^n$.
Namely, every matrix $A\in \mathrm{SL}^{\pm}_n(\mathbb R )$ is mapped to a matrix
$ \mathrm{SL}_{n+1}(\mathbb R )$ by adding a last raw and last column of zeros, except for the
$(n+1,n+1)$ entry that equals $\det(A)$.
The other extension   $\mathrm{SL}^{\pm}_{n}(\mathbb R)\to \mathrm{SL}^{\pm}_{n+1}(\mathbb R)$
is just the trivial extension, so that the $(n+1,n+1)$ entry equals $1$.
Theorem~\ref{Theorem:orientable} is easily adapted to the non-orientable case, just 
taking into account these different extensions, 
see~Theorem~\ref{Theorem:nonorientable} for details.

In this paper we describe the topological singularities.
The variety of characters is homeomorphic to a real semi-algebraic set 
(cf.~Theorem~\ref{Thm:semialgebraic}), in particular every character has a neighborhood homeomorphic to a semi-analytic set. We shall not discuss this semi-analytic structure.

\medskip

\paragraph{Organization of the paper}
Section~\ref{Section:prelim} is devoted to preliminaries on
varieties of (real) representations and characters, as well as cohomology and orbifolds. In particular we review Goldman's results.
In Section~\ref{Section:DefSpace} we prove the main theorem, 
as well as its generalization to non-orientable orbifolds.
Section~\ref{Section:proj2} applies the main theorem to deformation spaces of
projective 2-orbifolds viewed in the space of three-dimensional projective
structures, including some examples.
 Finally we apply these results in
Section~\ref{Section:projdef3} to determine when the space of projective structures on hyperbolic 3-orbifolds is singular (assuming that
all infinitesimal projective deformations come from the ends of the hyperbolic three-orbifold).
 \paragraph{Acknowledgements} I am indebted to  useful suggestions
of the referee, as well as the hospitality of the
 Max Plank Institute for Mathematics in the Sciences in Leipzig.

\section{Preliminaries on varieties of characters}
\label{Section:prelim}

This first section is devoted to preliminaries on varieties of (real) representations and characters.

\subsection{Variety of characters for real reductive groups}
\label{Section:real characters}
 
We recall the basic results on varieties of representations and characters in 
algebraic real reductive groups. 
Let $\Gamma$ be a finitely generated discrete group (eg the fundamental group of a compact 
two-orbifold) and  
let $G$ be either
$$
G=\mathrm{SL}_n(\mathbb R ) \qquad \textrm{ or }
\qquad
G=\mathrm{SL}^{\pm}_n(\mathbb R )
 =\{A\in\textrm{GL}_n(\mathbb R)\mid \det (A)=\pm 1\},
$$
for $n\geq 2$.
The results here apply to more general
semi-simple real algebraic groups (and some definitions need to be adapted), but we consider only those groups.

The \emph{variety of representations} $\hom( \Gamma, G )$ is a real algebraic set.
The group $G$ acts by conjugation 
on  $\hom( \Gamma, G )$
and to define the  real GIT quotient we follow the results
of   Luna \cite{Luna}, Richardson \cite{RichardsonDuke},
and 
Richardson and Slodowy \cite{RichardsonSlodowy}.
See also a modern treatment by 
B\"ohm and Lafuente \cite{BohmLafuente}, as well as 
an interesting approach using symmetric spaces by Parreau
\cite{Parreau}. 


\begin{Theorem}[\cite{Luna,  RichardsonSlodowy}]
\label{Thm:semialgebraic}
 The closure of each orbit by conjugation 
 of $G$ on $\hom( \Gamma, G )$ 
 has precisely one closed orbit. 
 Furthermore, the space  of closed orbits is homeomorphic to a real semi-algebraic set.
\end{Theorem}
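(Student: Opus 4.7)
The plan is to realize $\hom(\Gamma,G)$ as a closed $G$-invariant real algebraic subset of a finite-dimensional real vector space and then apply the machinery of real reductive GIT developed by Luna and Richardson--Slodowy that the theorem cites. Picking generators $\gamma_1,\dots,\gamma_k$ of $\Gamma$, the evaluation map $\rho\mapsto(\rho(\gamma_1),\dots,\rho(\gamma_k))$ identifies $\hom(\Gamma,G)$ with the zero locus in $M_n(\mathbb R)^k$ of the polynomial equations encoding the defining relations of $\Gamma$ together with the equations cutting out $G$ (namely $\det=1$ for $\mathrm{SL}_n$, resp.\ $\det^2=1$ for $\mathrm{SL}_n^\pm$). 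Simultaneous conjugation is the restriction of a rational linear action on $M_n(\mathbb R)^k$, and $G$ is self-adjoint with respect to the transpose, hence real reductive, so the hypotheses of the Luna--Richardson--Slodowy framework are in force.

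For the first statement I would follow the real Kempf--Ness / Mostow--Richardson approach. Equip $M_n(\mathbb R)^k$ with the inner product induced by $\langle A,B\rangle=\mathrm{tr}(AB^t)$, which is invariant under the maximal compact subgroup $K=\mathrm{SO}(n)$ (or $\mathrm O(n)$). For $v\in\hom(\Gamma,G)$, the function $g\mapsto\|g\cdot v\|^2$ is $K$-invariant on the left and, via the Cartan decomposition $G=K\exp(\mathfrak p)$, descends to a function on $G/K$ that is convex along the geodesics of this symmetric space. Standard convexity arguments then show that its minimizers in $\overline{G\cdot v}$ form a single $K$-orbit, that a $G$-orbit is closed exactly when it meets this set of minimizers, and that every orbit closure contains such minimizers. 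Together this yields existence and uniqueness of a closed orbit in every orbit closure.

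For the second statement, Hilbert--Nagata for the real reductive group $G$ provides a finite set $f_1,\dots,f_N$ of generators of the invariant ring $\mathbb R[M_n(\mathbb R)^k]^G$, and the map $\Phi=(f_1,\dots,f_N)\colon\hom(\Gamma,G)\to\mathbb R^N$ has semi-algebraic image by Tarski--Seidenberg. By the previous step, two points with the same image under $\Phi$ lie in orbit closures containing a common closed orbit, so $\Phi$ descends to an injection of the topological quotient $\hom(\Gamma,G)/\!/G$ onto $\Phi(\hom(\Gamma,G))$. A properness argument based on the divergence of $\|\cdot\|^2$ at infinity along $\exp(\mathfrak p)$ upgrades this continuous bijection to a homeomorphism, realizing the space of closed orbits as a real semi-algebraic set.

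The main obstacle, and the point where real GIT genuinely diverges from the complex story, is the step asserting that real polynomial invariants separate real closed orbits: over $\mathbb C$ this is essentially automatic from Hilbert's Nullstellensatz, whereas over $\mathbb R$ two distinct real closed orbits may have intersecting complexified closures, so one genuinely needs the Kempf--Ness analysis on the Cartan decomposition to rule this out. This is precisely the content of the Luna and Richardson--Slodowy analogues of the Mostow lemma that the theorem invokes.
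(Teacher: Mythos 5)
The theorem is cited to Luna and Richardson--Slodowy; the paper does not reprove it but indicates the mechanism: the real Kempf--Ness theorem produces a set $\mathcal M\subset\hom(\Gamma,G)$ of minimal vectors that meets every closed orbit in exactly one $K$-orbit (for $K\subset G$ maximal compact), and then $X(\Gamma,G)\cong\mathcal M/K$, which is semi-algebraic because $K$ is compact. Your sketch of the first assertion (convexity of $g\mapsto\|g\cdot v\|^2$ along geodesics of $G/K$, minimal vectors form a single $K$-orbit, an orbit is closed iff it meets $\mathcal M$, every orbit closure meets $\mathcal M$) is the right argument and matches the one underlying the cited references.

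Your route to the second assertion, however, has a genuine gap. You take $\Phi=(f_1,\dots,f_N)$ built from generators of the $G$-invariant ring and assert that ``two points with the same image under $\Phi$ lie in orbit closures containing a common closed orbit,'' so that $\Phi$ descends to an injection on the space of closed orbits. That separation property is precisely what \emph{fails} over $\mathbb R$: unlike the complex case, real $G$-invariant polynomials do not separate real closed orbits. A concrete counterexample inside your setting ($\Gamma=\mathbb Z$, $G=\mathrm{SL}_2(\mathbb R)$): the rotations $R_\theta$ and $R_{-\theta}$ generate the invariant ring values (the trace is the only invariant), they both have closed conjugation orbits, yet they are not $\mathrm{SL}_2(\mathbb R)$-conjugate (the only conjugating matrices in $\mathrm{GL}_2$ have determinant $<0$). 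So $\Phi$ is genuinely non-injective on $X(\Gamma,G)$, and no properness argument can repair a map that fails to be a bijection. Your closing paragraph correctly identifies that real invariants need not separate real closed orbits, but then says the Kempf--Ness analysis ``rules this out'' --- it does not; the non-separation is real and unavoidable. The correct mechanism, and the one the paper invokes, is not to embed $X(\Gamma,G)$ via $G$-invariants at all, but to use the homeomorphism $X(\Gamma,G)\cong\mathcal M/K$ and then apply invariant theory for the \emph{compact} group $K$: for compact groups, polynomial invariants do separate orbits, so $\mathcal M/K$ embeds semi-algebraically into some $\mathbb R^N$ via $K$-invariants (this is the Procesi--Schwarz style argument), giving the semi-algebraic structure on the space of closed orbits.
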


Here the space of closed orbits is equipped with the quotient topology of
a subset of $\hom( \Gamma, G )$.
One of the ingredients of Theorem~\ref{Thm:semialgebraic}
is Kempf-Ness theorem for real coefficients, proved first in \cite{RichardsonSlodowy}
(see also \cite{BohmLafuente}). Kempf-Ness
 provides an algebraic subset $\mathcal M\subset \hom( \Gamma, G )$ that intersects all closed orbits
and only closed orbits. Furthermore the space of closed orbits is homeomorphic to $\mathcal M/K$
for $K\subset G$ compact (and therefore it is homeomorphic to a semi-algebraic
set).

Using  Theorem~\ref{Thm:semialgebraic} one defines
 the \emph{variety of characters} $X(\Gamma, G)$ as the space of closed orbits.
 This construction
 is also called the \emph{real or 
$\mathbb R-\mathrm{GIT}$ quotient}:
 $$
X(\Gamma, G)=
\hom( \Gamma, G )/\!/ G.
$$
It is the ``Hausdorff quotient'' of the  topological quotient
$ \hom( \Gamma, G )/ G $:
every continuous map from
$ \hom( \Gamma, G )/ G $
to a Hausdorff space $ Y$
factors through $
\hom( \Gamma, G )/\!/ G
$ in a unique way.

 Next we also mention a theorem that identifies the orbits that are closed.

\begin{Definition}
 A representation $\rho\in 
\hom( \Gamma, G )$ is called:
\begin{itemize}
 \item $\mathbb R$-\emph{irreducible}
or $\mathbb R$-\emph{simple}
if it has 
no proper invariant  subspace in $\mathbb R^n$;
 \item $\mathbb C$-\emph{irreducible}
or $\mathbb C$-\emph{simple}
if it has 
no proper invariant  subspace in $\mathbb C^n$;
 \item  \emph{semi-simple}
if $\mathbb R^n=E_1\oplus\cdots\oplus E_k$ so that each $E_i$ is $\rho$-invariant and the restriction of $\rho$ to $E_i$ is simple.
\end{itemize}
\end{Definition}

Notice that $\mathbb C$-irreducibility implies 
$\mathbb R$-irreducibility. However, the converse is not true,
consider
 representations in $\mathrm{SO}(2)$.
 Notice also that semi-simplicity does not depend on considering
 $\mathbb R^n$ or  $\mathbb C^n$ (an $\mathbb R$-irreducible space 
 decomposes
 as a possibly trivial direct sum of $\mathbb C$-irreducible spaces).
 The following 
 lemma is elementary:

\begin{Lemma}
\label{Lemma:semisimple}
 A representation $\rho\in\hom(\Gamma, G)$ is semi-simple
 iff every invariant subspace $V\subset\mathbb R^n$ has an invariant complement,
 eg a $\rho$-invariant subspace $W\subset\mathbb R^n$ satisfying 
 $V\oplus W= \mathbb R^n$.
\end{Lemma}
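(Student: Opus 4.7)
The plan is to prove both directions using standard arguments from the theory of semisimple modules, specialized to finite-dimensional real representations, since the statement is simply the equivalence between being a direct sum of simples and being completely reducible.

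For the forward implication ($\Rightarrow$), I would begin with a decomposition $\mathbb{R}^n = E_1 \oplus \cdots \oplus E_k$ with each $E_i$ invariant and simple, together with an arbitrary invariant subspace $V \subset \mathbb{R}^n$. The plan is to choose a subset $S \subseteq \{1,\ldots,k\}$ which is maximal for the property $V \cap \bigoplus_{i \in S} E_i = 0$, and to set $W = \bigoplus_{i \in S} E_i$. Then $V \oplus W$ is an invariant subspace. If some $E_j$ were not contained in $V \oplus W$, then $E_j \cap (V \oplus W)$ would be a proper invariant subspace of $E_j$, hence zero by simplicity of $E_j$, allowing $j$ to be added to $S$ and contradicting maximality. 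Thus every $E_j$ lies in $V \oplus W$ and we obtain $V \oplus W = \mathbb{R}^n$.

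For the reverse implication ($\Leftarrow$), I would argue by induction on $n$. If $\mathbb{R}^n$ is simple we are done. Otherwise, pick an invariant subspace $V \subset \mathbb{R}^n$ of minimal positive dimension, which is automatically simple. By hypothesis there is an invariant $W$ with $\mathbb{R}^n = V \oplus W$. The crucial step is to verify that $W$ inherits the complementation property, so that the induction hypothesis applies to $\rho|_W$. Given an invariant $V' \subset W$, the hypothesis applied in $\mathbb{R}^n$ yields an invariant $W'$ with $\mathbb{R}^n = V' \oplus W'$. A short check shows $W = V' \oplus (W \cap W')$, and $W \cap W'$ is clearly invariant, so the property passes to $W$. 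By induction $W$ splits as a direct sum of simple invariant subspaces, and combining with $V$ yields the required semisimple decomposition of $\mathbb{R}^n$.

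The only step requiring attention is the inheritance of the complementation property in the reverse direction, namely the verification that $W = V' \oplus (W \cap W')$: one takes $w \in W$, writes $w = v' + w'$ using the decomposition $\mathbb{R}^n = V' \oplus W'$, and notes that since $V' \subset W$ one has $w' = w - v' \in W \cap W'$; the sum is direct because $V' \cap W' = 0$. This is routine linear algebra, which is why the statement is labeled elementary.
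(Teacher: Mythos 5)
The paper states this lemma without proof, remarking only that it is ``elementary,'' so there is no author's argument to compare against. Your proof is correct and is the standard argument establishing the equivalence between the two usual formulations of semisimplicity (direct sum of simples versus complete reducibility) for a finite-dimensional module. The forward direction, choosing a maximal subfamily of the simple summands whose span meets $V$ trivially, is the classical Zorn-type argument specialized to the finite-dimensional case; the reverse direction, splitting off a simple invariant subspace of minimal positive dimension and inducting on dimension, is likewise standard, and you correctly identify and carry out the one point that genuinely needs checking, namely that the complementation property is inherited by the invariant direct summand $W$. The verification $W = V' \oplus (W \cap W')$ is exactly right and the induction closes. No gaps.
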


\begin{Theorem}[\cite{RichardsonDuke}]
Let $\rho\in 
\hom( \Gamma, G )$. The orbit  $G\cdot\rho$  is closed if and only if $\rho$ is semi-simple. 
\end{Theorem}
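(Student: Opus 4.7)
The plan is to prove the two implications separately, using the one-parameter-subgroup (Hilbert--Mumford) philosophy for the easier direction and then bootstrapping with Theorem~\ref{Thm:semialgebraic} for the other.

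First I would show that if $\rho$ is not semi-simple then $G\cdot \rho$ is not closed. By Lemma~\ref{Lemma:semisimple} there is a $\rho$-invariant subspace $V\subset \mathbb R^n$ with no invariant complement. Pick any vector-space complement $W$, set $k=\dim V$, and write every $\rho(\gamma)$ in block form
$$
\rho(\gamma)=\begin{pmatrix} A(\gamma) & B(\gamma) \\ 0 & D(\gamma)\end{pmatrix},
$$
where the cocycle $B$ represents a non-trivial extension class. Consider the one-parameter subgroup $g_t=\mathrm{diag}(t^{n-k}I_k,\,t^{-k}I_{n-k})$, which lies in $\mathrm{SL}_n(\mathbb R)\subset G$ for $t>0$. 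Then
$$
g_t\,\rho(\gamma)\,g_t^{-1}=\begin{pmatrix} A(\gamma) & t^{n}B(\gamma) \\ 0 & D(\gamma)\end{pmatrix},
$$
which converges as $t\to 0$ to $\rho_0=\mathrm{diag}(A,D)$. Non-splitness of the extension means $\rho_0$ is not conjugate to $\rho$, so $\rho_0\in\overline{G\cdot\rho}\setminus G\cdot\rho$ and the orbit is not closed.

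For the converse, suppose $\rho$ is semi-simple. By Theorem~\ref{Thm:semialgebraic}, $\overline{G\cdot\rho}$ contains a unique closed orbit $G\cdot\sigma$. Taking traces shows $\chi_\sigma(\gamma)=\lim \mathrm{tr}\,(g_m\rho(\gamma)g_m^{-1})=\chi_\rho(\gamma)$ for any approximating sequence, so $\sigma$ and $\rho$ have the same character. Applying the direction just proved to the closed orbit $G\cdot \sigma$ shows that $\sigma$ is itself semi-simple. Finally one uses the standard fact that two semi-simple representations $\Gamma\to G$ with the same character are conjugate in $G$: on each real isotypic component the multiplicity and isomorphism class are recovered from $\chi$, the identification of the two decompositions is unique up to an automorphism of each isotypic piece, and such an automorphism can be normalised so that the conjugator lies in $\mathrm{SL}_n(\mathbb R)$ (resp.\ $\mathrm{SL}_n^\pm(\mathbb R)$). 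Hence $\sigma\in G\cdot\rho$, so $G\cdot\rho$ coincides with the unique closed orbit in its closure.

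The main obstacle is the last rationality step: over $\mathbb R$ an irreducible representation need not be $\mathbb C$-irreducible (the paper already flags $\mathrm{SO}(2)$), so showing that equal characters force $G$-conjugacy, and not merely $\mathrm{GL}_n(\mathbb C)$-conjugacy, demands a careful treatment of the real isotypic decomposition and of the determinant normalisation when $G=\mathrm{SL}_n(\mathbb R)$ or $\mathrm{SL}_n^\pm(\mathbb R)$. Everything else is an easy application of the Hilbert--Mumford criterion and of the Hausdorffness already built into Theorem~\ref{Thm:semialgebraic}.
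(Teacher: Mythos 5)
The paper states this theorem with a bare citation to \cite{RichardsonDuke} and does not prove it, so there is no internal argument to compare against; I am therefore reviewing your sketch on its own merits. The outline — one-parameter degeneration for ``not semisimple $\Rightarrow$ not closed'', then uniqueness of the closed orbit plus a rationality statement for the converse — is the standard strategy, and the cocharacter you write down is correct (its determinant is $t^{k(n-k)-k(n-k)}=1$ and the conjugation scales $B$ by $t^n$). But there are two genuine gaps.

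First, the assertion ``non-splitness of the extension means $\rho_0$ is not conjugate to $\rho$'' is doing real work and is not obvious: it is exactly the statement that a short exact sequence $0\to V\to M\to N\to 0$ of finite-dimensional $\Gamma$-modules with $M\cong V\oplus N$ must split. That is true (a dimension count on $\operatorname{Hom}_\Gamma(N,-)$, or Miyata's theorem), but it needs to be said. The standard, and cleaner, way to avoid it is to degenerate not just one step but along a full Jordan--H\"older filtration, using a cocharacter with strictly decreasing integer weights (normalised to determinant $1$): the limit is the associated graded representation $\rho^{\mathrm{ss}}$, which is semi-simple, and since semi-simplicity is a conjugacy invariant, $\rho^{\mathrm{ss}}\in\overline{G\cdot\rho}\setminus G\cdot\rho$ whenever $\rho$ is not semi-simple.

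Second, and more seriously, the ``standard fact'' that two semi-simple representations into $G=\mathrm{SL}_n(\mathbb R)$ with the same trace function are conjugate by an element of $G$ is not established by your normalisation remark, and indeed that remark fails in general. If $\rho$ is $\mathbb R$-irreducible with $\operatorname{End}_\Gamma(\rho)=\mathbb R$ and $n$ is even, the centraliser of $\rho$ in $\mathrm{GL}_n(\mathbb R)$ is the scalars and the set of determinants of centralising elements is $(\mathbb R^*)^n=\mathbb R_{>0}$; there is no way to flip the sign of a conjugator. So the last sentence of your argument, as written, does not close the gap you yourself flag. What does work: show (via complexification and vanishing of $H^1(\operatorname{Gal}(\mathbb C/\mathbb R),\mathrm{GL}_n(\mathbb C))$) that $\sigma$ and $\rho$ are conjugate in $\mathrm{GL}_n(\mathbb R)$; this gives $\dim G\cdot\sigma=\dim G\cdot\rho$; and then invoke the fact that for a real algebraic action, $\overline{G\cdot\rho}\setminus G\cdot\rho$ is a union of orbits of strictly smaller dimension. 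Hence $G\cdot\sigma$, being in $\overline{G\cdot\rho}$ and of full dimension, must equal $G\cdot\rho$, which is therefore closed. Alternatively one can simply follow Richardson's own proof, which treats the rationality over $\mathbb R$ directly.
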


From this theorem, we can think of elements in $X(\Gamma, G)$
as conjugacy classes of semi-simple representations, that we may call
characters by abuse of notation. Furthermore we may talk about irreducible or simple characters.

The center of $G$ is $\mathcal Z(G)=\{h\in G\mid hg=gh\textrm{ for every }g\in G \}$.

\begin{Definition}
An \emph{analytic slice} at $\rho$
 is  analytic subvariety
 $S\subset   \hom(\Gamma, G)$ such that
  \begin{enumerate}[(i)]
  \item   $S\cap G\cdot\rho=\{\rho\}$,
 \item There is an bi-analytic map
 $G/\mathcal Z(G)\times S\cong G\cdot S$ and $ G\cdot S$ is a  neighborhood of $G\cdot \rho$.
 \end{enumerate}
\end{Definition}

The following is \cite[Theorem~1.2]{JohnsonMillson} by Johnson and Millson:

\begin{Theorem}[\cite{JohnsonMillson}]
\label{Theorem:slices}
Let $\rho\in \hom(\Gamma, G)$ be a $\mathbb C$-irreducible representation. Then 
the action by conjugation admits an analytic slice at $\rho$.
\end{Theorem}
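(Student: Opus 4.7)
The plan is to carry out a Luna-type slice argument in the real-analytic category, with the key simplification coming from $\mathbb C$-irreducibility. First I would observe that, by Schur's lemma applied to the complexified representation, any $g\in G$ centralizing a $\mathbb C$-irreducible $\rho$ acts on $\mathbb C^n$ as a scalar, whence $Z_G(\rho)=\mathcal Z(G)$. For $G=\mathrm{SL}_n(\mathbb R)$ or $\mathrm{SL}_n^{\pm}(\mathbb R)$ the center is finite, and it acts trivially on all of $\hom(\Gamma,G)$ (central elements satisfy $z\eta z^{-1}=\eta$). Hence the conjugation action near $\rho$ factors through $G/\mathcal Z(G)$, and the whole problem becomes the production of a local analytic product decomposition $G\cdot S\cong (G/\mathcal Z(G))\times S$.

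To construct $S$ I would fix a finite generating set $\gamma_1,\dots,\gamma_k$ of $\Gamma$ and embed $\hom(\Gamma,G)\hookrightarrow G^k$ by evaluation, so that the Zariski tangent at $\rho$ becomes $Z^1(\Gamma,\mathfrak g_{\mathrm{Ad}\rho})\subset\mathfrak g^k$. The derivative of the orbit map $g\mapsto g\rho g^{-1}$ is the coboundary operator $\mathfrak g\to B^1(\Gamma,\mathfrak g_{\mathrm{Ad}\rho})$, whose kernel $\mathfrak g^{\mathrm{Ad}\rho}=\mathrm{Lie}(\mathcal Z(G))$ vanishes; thus $G\cdot\rho$ is immersed at $\rho$ with tangent space $B^1$. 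I would then pick a linear complement $W\subset\mathfrak g^k$ of $B^1$, exponentiate it in $G^k$ (after right-translating the basepoint to $e$) to produce an analytic submanifold $\widetilde S\subset G^k$ transverse to $G\cdot\rho$, and set
\[
S:=\widetilde S\cap\hom(\Gamma,G).
\]
The Zariski tangent of $S$ at $\rho$ is then $W\cap Z^1$, a linear complement of $B^1$ in $Z^1$. The multiplication map
\[
\Psi\colon G\times S\longrightarrow\hom(\Gamma,G),\qquad (g,\sigma)\mapsto g\sigma g^{-1},
\]
has differential at $(e,\rho)$ equal to the direct-sum decomposition $\mathfrak g\oplus T_\rho S\to Z^1$, and the analytic inverse function theorem therefore gives a local bi-analytic equivalence near $(e,\rho)$.

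The main obstacle will be promoting this \emph{local} statement to the \emph{saturated} axiom (ii): that $G\cdot S$ is a full neighborhood of $G\cdot\rho$ and that $(G/\mathcal Z(G))\times S\to G\cdot S$ is a global bi-analytic homeomorphism. For this I would use two ingredients. First, $\mathbb C$-irreducibility is an open condition on $\hom(\Gamma,G)$, so after shrinking $S$ every $\sigma\in S$ is still $\mathbb C$-irreducible, has $Z_G(\sigma)=\mathcal Z(G)$ by Schur, and has closed $G$-orbit by Richardson's theorem (since it is semi-simple). Second, a tube/properness argument exploiting the closedness of nearby orbits confines any $g\in G$ with $g\sigma g^{-1}\in S$ for some $\sigma\in S$ to lie in a small neighborhood of $\mathcal Z(G)$, after which the local inversion of $\Psi$ forces $g\in\mathcal Z(G)$ and the two slice points to agree. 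Axiom (i), $S\cap G\cdot\rho=\{\rho\}$, is then automatic from transversality after a final shrink.
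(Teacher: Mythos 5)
Your outline follows essentially the same route as the paper's Theorem~\ref{Theorem:Slice}, which is the more general version of Theorem~\ref{Theorem:slices} proved in Section~\ref{Section:slice}: embed $\hom(\Gamma,G)$ into $G^k$ via a presentation, identify the Zariski tangent with $Z^1(\Gamma,\mathfrak g)$, choose a complement of $B^1$ inside $\mathfrak g^k$, realize the slice as the preimage of that complement intersected with $\hom(\Gamma,G)$, and apply the analytic inverse function theorem to the multiplication map. The only real technical difference is the linearizing chart: the paper uses $\Phi=(f,\dots,f)\circ\phi$, where $\phi(\rho')=(\rho'(\gamma_i)\rho(\gamma_i)^{-1})_i$ and $f$ is the trace-free projection, which is manifestly $G_\rho$-equivariant (needed in the paper because it also treats semisimple non-irreducible $\rho$), whereas you exponentiate a complement of $B^1$. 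In the $\mathbb C$-irreducible case $G_\rho=\mathcal Z(G)$ is finite and central (your Schur step, which is Lemma~\ref{lemma:stabirr}), so equivariance is vacuous and both constructions produce the same germ.

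There is, however, a genuine gap in the passage from the \emph{local} bi-analytic equivalence to the \emph{saturated} axiom (ii). You write that a ``tube/properness argument exploiting the closedness of nearby orbits confines any $g\in G$ with $g\sigma g^{-1}\in S$ to lie in a small neighborhood of $\mathcal Z(G)$,'' but closedness of the orbits $G\cdot\sigma$ (which you correctly get from openness of $\mathbb C$-irreducibility and Richardson's theorem) does \emph{not} by itself yield this. What is needed is that the conjugation action of $G$ is proper on a $G$-saturated neighborhood of $\rho$, equivalently that the orbit maps $G/\mathcal Z(G)\to G\cdot\sigma$ are proper \emph{uniformly} in $\sigma$ near $\rho$; this is exactly the nontrivial input of the real Luna/Kempf--Ness theory (Richardson--Slodowy) and is the actual content of the Johnson--Millson slice theorem you are reproving. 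In the $\mathbb C$-irreducible case it can be obtained directly: take a $KAK$ decomposition $g_n=k_na_nl_n$ of a hypothetical divergent sequence with $g_n\sigma_ng_n^{-1}\to\rho$, $\sigma_n\to\rho$, and observe that $a_n(l_n\sigma_nl_n^{-1})a_n^{-1}$ must blow up since the $\mathbb C$-irreducible limit $\rho$ cannot lie in a proper parabolic subgroup, giving a contradiction. Without some such argument, the step ``the local inversion of $\Psi$ forces $g\in\mathcal Z(G)$'' does not follow, because there is no a priori bound keeping $g$ in the domain where the inverse function theorem applies.
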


We discuss a more general version of this theorem in Theorem~\ref{Theorem:Slice}.
In general the slice takes into account the stabilizer of $\rho$, when the representation is irreducible, the stabilizer is just the center of $G$:

\begin{Lemma}
\label{lemma:stabirr}
Let $\rho\in \hom(\Gamma, G)$ be a $\mathbb C$-irreducible representation. Then the stabilizer by conjugation of $\rho$ is the center of $G$:
$
\mathrm{Stab}_G(\rho)=\mathcal Z(G)
$.
\end{Lemma}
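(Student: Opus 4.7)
The plan is to apply Schur's lemma over $\mathbb C$ to the commutant of $\rho(\Gamma)$, and then use the reality of elements of $G$ together with the determinant condition to pin down the stabilizer.

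First, one inclusion is immediate: every central element of $G$ commutes with $\rho(\gamma)$ for all $\gamma\in\Gamma$, so $\mathcal Z(G)\subset\mathrm{Stab}_G(\rho)$. For the nontrivial inclusion, I would take $g\in\mathrm{Stab}_G(\rho)$, meaning $g\rho(\gamma)=\rho(\gamma)g$ for all $\gamma\in\Gamma$, and regard $g$ as an element of $\mathrm{GL}_n(\mathbb C)$ acting on $\mathbb C^n$. Since $\rho$ is $\mathbb C$-irreducible, the extension of $\rho$ to $\mathbb C^n$ is an irreducible representation of $\Gamma$, and $g$ is a $\Gamma$-equivariant endomorphism of $\mathbb C^n$.

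Next I would invoke Schur's lemma: the commutant of a $\mathbb C$-irreducible representation of $\Gamma$ on $\mathbb C^n$ is $\mathbb C\cdot I$, so there exists $\lambda\in\mathbb C$ with $g=\lambda\, I$. The reality of the matrix entries of $g$ forces $\lambda\in\mathbb R$. Now the determinant constraint defining $G$ yields $\lambda^n=\det(g)\in\{\pm 1\}$, and since $\lambda$ is real this forces $\lambda\in\{\pm 1\}$. Thus $g=\pm I$, which lies in $\mathcal Z(\mathrm{SL}_n(\mathbb R))$ when $n$ is even (and reduces to $I$ when $n$ is odd, since then $-I$ has determinant $-1$), and always lies in $\mathcal Z(\mathrm{SL}^\pm_n(\mathbb R))$. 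In either case $g\in\mathcal Z(G)$, completing the proof.

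There is essentially no obstacle here: the only subtle point is remembering to complexify before applying Schur, since $\mathbb C$-irreducibility of $\rho$ is genuinely stronger than $\mathbb R$-irreducibility (as the paper has just noted with the example of $\mathrm{SO}(2)$-valued representations, whose real commutant is larger than the center).
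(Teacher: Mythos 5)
Your proof is correct and follows essentially the same route as the paper: the paper argues that a stabilizer element $g$ must commute with $\rho$, hence $\rho$ preserves the eigenspaces of $g$ in $\mathbb{C}^n$, so $g$ has no proper eigenspace and is therefore scalar — which is exactly Schur's lemma unwound by hand. You invoke Schur as a black box and then spell out why the scalar is real and of determinant $\pm 1$, a step the paper leaves implicit since a scalar matrix belonging to $G$ is by definition central in $G$; both are fine.
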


\begin{proof}
 The center is always contained in the stabilizer and we prove the other inclusion. Let $g\in G$ be an element of the stabilizer of $\rho$. 
 The relation $g\rho=\rho g$ implies that $\rho$ preserves the  eigenspaces of $g$, thus $g$ has no proper   eigenspaces. Equivalently, $g$ is an scalar multiple of the identity, so an element of the
 center.
\end{proof}

%
%

\begin{Remark}
 It follows from Theorem~\ref{Theorem:slices}
that there exists a well
defined analytic structure in a neighborhood of a $\mathbb C$-irreducible character, as 
in~\cite{JohnsonMillson}. Without assuming irreducibility,
one should consider semi-analytic structures on $X(\Gamma, G)$.
 \end{Remark}



\subsection{Group cohomology}
\label{Section:cohomology}

We follow \cite{Brown} for basics on group cohomology.
%
Fix a finitely generated group $\Gamma$, a $\mathbb R$-vector space $V$, and a representation
$\Gamma\to\mathrm{GL}(V)$, so that $V$ is called a \emph{$\Gamma$-module}.
We are mainly interested in the case where there is a representation 
$\rho\colon\Gamma\to G$ and $V=\mathfrak g$, that is a $\Gamma$-module via the adjoint of $\rho$,
but we shall also consider other $\Gamma$-modules. 

The $i$-chains of the  \emph{bar resolution} are defined as maps from 
$\Gamma\times\overset{(i)}\cdots\times \Gamma$ to $V$:
$$
C^i(\Gamma,V)= \{\theta\colon \Gamma\times\overset{(i)}\cdots\times \Gamma\to V\},
\ \textrm{ for }i>0
\qquad\textrm{ and }\qquad C^0(\Gamma,V)=V.
$$
The coboundary $\delta^i\colon C^i(\Gamma,V)\to C^{i+1}(\Gamma,V)$ is defined by
\begin{multline*}
\delta^i(\theta)(\gamma_0,\ldots,\gamma_{i})
=\gamma_0\theta(\gamma_1,\ldots,\gamma_i)
\\
+
\sum_{j=0}^{i-1}(-1)^{j+1} \theta(\gamma_0,\ldots, \gamma_j\gamma_{j+1},\ldots, \gamma_i)
\\
+(-1)^{i+1}\theta(\gamma_0,\ldots, \gamma_{i-1}) . 
\end{multline*}
The space of cycles and coboundaries are denoted respectively by
$$
Z^i(\Gamma, V)=\ker \delta^i  \qquad \textrm{and}\qquad
B^i(\Gamma, V)=\operatorname{Im} \delta^{i-1} 
$$
so that the cohomology is $$
H^i(\Gamma, V)= Z^i(\Gamma, V)/ B^i(\Gamma, V).
$$

The zero-th cohomology group is naturally isomorphic to the subspace of invariants 
\begin{equation}
 \label{eqn:H0}
H^0(\Gamma, V)\cong Z^0(\Gamma, V)\cong V^{\Gamma},
\end{equation}
because $\delta^0(v)(\gamma)= \gamma v-v$ for $\gamma\in\Gamma$ and $v\in V$.

\subsection{Products in cohomology}

 Let $V_1$, $V_2$ and $V_3$ be $\Gamma$-modules and 
let 
$$
\varphi\colon V_1\times V_2\to V_3
$$ be a bilinear map that is $\Gamma$-equivariant. 
Combined with the cup product it induces a pairing
in cohomology, that we denote by $\varphi(\cdot\cup\cdot)$.
$$ \varphi(\cdot\cup\cdot)\colon
 H^i(\Gamma, V_1)\times 
 H^{j}(\Gamma,  V_2)\overset\cup\to 
 H^{i+j}(\Gamma,  V_1\otimes V_2 )\overset\varphi\to 
 H^{i+j}(\Gamma, V_3).
 $$

We are  interested in the explicit description for  1-cocycles. 
The space of $1$-cocycles is 
$$
Z^1(\Gamma, V)= \{\theta\colon \Gamma\to V\mid \theta(\gamma_1\gamma_2)=\theta(\gamma_1)+\gamma_1\theta(\gamma_2), 
\forall \gamma_1,\gamma_2\in \Gamma\}.
$$
The space of 1-coboundaries is
$$
B^1(\Gamma, V)= \{\theta_v \in  Z^1(\Gamma, V)\mid \textrm{ there is a }v\in V\textrm{ s.t. } 
\theta_v(\gamma)= \gamma v - v,\forall\gamma\in\Gamma\}  .
$$
For 1-cocycles, we describe the cup product 
following \cite{Brown}:
if $z_i\in Z^1(\Gamma, V_i)$, then $
\varphi(z_1\cup z_2)\in Z^2(\Gamma, V_3)
$ 
is the 2-cocycle
\begin{equation}
\label{eqn:cup}
\varphi(z_1\cup z_2)(\alpha,\beta)= \varphi(z_1(\alpha), \alpha z_2(\beta)),\qquad
\forall \alpha,\beta\in\Gamma.
\end{equation}
In addition we have:

\begin{Lemma} 
\label{lemma:symmetry}
Assume $V_2=V_1$, 
then $$
\varphi(\cdot\cup \cdot)\colon H^1(\Gamma, V_1)\times H^1(\Gamma, V_1)\to H^2(\Gamma, V_3)
$$ 
is skew-symmetric when $\varphi$ is symmetric, and 
symmetric when  $\varphi$ is skew-symmetric. 
\end{Lemma}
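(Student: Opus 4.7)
The plan is to produce an explicit 1-cochain whose coboundary realizes the symmetrization (or skew-symmetrization) of the cup product, so that the identity holds already on the cochain level modulo coboundaries. This is the standard trick for proving graded-commutativity of the cup product in the bar resolution, adapted to the bilinear pairing $\varphi$.

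Concretely, given cocycles $z_1, z_2 \in Z^1(\Gamma, V_1)$, I would set
\[
\eta(\gamma) := \varphi(z_1(\gamma), z_2(\gamma)) \in V_3, \qquad \gamma \in \Gamma,
\]
and compute $\delta^1 \eta \in C^2(\Gamma, V_3)$. Using that $\varphi$ is $\Gamma$-equivariant and bilinear, together with the cocycle relation $z_i(\alpha \beta) = z_i(\alpha) + \alpha z_i(\beta)$, the expansion
\[
\delta^1 \eta(\alpha,\beta) = \alpha \cdot \varphi(z_1(\beta), z_2(\beta)) - \varphi\bigl(z_1(\alpha\beta), z_2(\alpha\beta)\bigr) + \varphi(z_1(\alpha), z_2(\alpha))
\]
collapses after cancellation to the cross-terms
\[
\delta^1 \eta(\alpha,\beta) = -\varphi\bigl(z_1(\alpha),\alpha z_2(\beta)\bigr) - \varphi\bigl(\alpha z_1(\beta),z_2(\alpha)\bigr).
\]
The first term is exactly $-\varphi(z_1 \cup z_2)(\alpha,\beta)$ by \eqref{eqn:cup}. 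To recognize the second term, swap the two arguments of $\varphi$ using its symmetry type: if $\varphi$ is symmetric, it equals $-\varphi(z_2(\alpha), \alpha z_1(\beta)) = -\varphi(z_2 \cup z_1)(\alpha,\beta)$; if $\varphi$ is skew-symmetric, it equals $+\varphi(z_2 \cup z_1)(\alpha,\beta)$.

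Therefore $\varphi(z_1 \cup z_2) + \varphi(z_2 \cup z_1) = -\delta^1 \eta$ in the symmetric case and $\varphi(z_1 \cup z_2) - \varphi(z_2 \cup z_1) = -\delta^1 \eta$ in the skew-symmetric case, so both identities hold in $H^2(\Gamma, V_3)$ as claimed. The only step that requires any care is the bookkeeping of signs in expanding $\delta^1 \eta$ via the cocycle relation; no obstacles beyond that are expected, since the argument is purely formal once $\eta$ is chosen.
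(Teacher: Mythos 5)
Your proof is correct and is essentially the same argument the paper gives: both consider the 1-cochain $\gamma\mapsto\varphi(z_1(\gamma),z_2(\gamma))$, compute its coboundary, and use the cocycle relation together with the symmetry type of $\varphi$ to identify the result with $\mp\big(\varphi(z_1\cup z_2)\pm\varphi(z_2\cup z_1)\big)$. In fact you are slightly more careful than the paper's displayed computation, which omits the module action $\alpha\cdot$ on the first term of $\delta^1 c(\alpha,\beta)$; your version includes it correctly, and the cancellation with $\varphi(\alpha z_1(\beta),\alpha z_2(\beta))$ then proceeds exactly as you indicate.
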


\begin{proof}
Consider the 1-chain $c\colon\Gamma\to V_3$ defined by $c(\gamma)=\varphi(z_1(\gamma),  z_2(\gamma))$. 
A direct application of the definition of the coboundary $\delta^1$
yields 
\begin{align*}
\delta^1(c)(\alpha, \beta)& = \varphi(z_1(\beta),  z_2(\beta)) - \varphi(z_1(\alpha\beta ),   z_2(\alpha\beta))
+ \varphi(z_1(\alpha),  z_2(\alpha)) \\
&= -\varphi(z_1(\alpha), \alpha z_2(\beta))
-\varphi(\alpha z_1(\beta),  z_2(\alpha)), \qquad \qquad \forall \alpha, \beta\in \Gamma .
\end{align*}
Hence by \eqref{eqn:cup} we deduce: 
$$-\delta^1(c)=  \varphi(z_1\cup z_2)\pm  \varphi(z_2\cup z_1) $$
where the sign in $\pm$ is $+$ for $\varphi$ symmetric, and $-$ for 
$\varphi$ skew-symmetric.
 \end{proof}

Below we describe the role of 1-cocycles in tangent spaces and infinitesimal deformations.

\subsection{Tangent space to the variety of representations}

Let $\rho\colon\Gamma\to G$ be a representation. We view $\mathfrak g$ as a $\Gamma$-module by composing $\rho$ with the
the adjoint: $\mathrm{Ad}\colon G\to \mathrm{End}(\mathfrak g) $.

We describe Weil's construction \cite{LubotzkyMagid, Weil}. Given a 1-cocycle  $d\in Z^1(\Gamma,\mathfrak g)$,
the assignment for each $\gamma\in \Gamma$:
$$
\gamma\to (1+ t \, d(\gamma)) \rho(\gamma)
$$
is an infinitesimal path in $\hom(\Gamma, G)$; namely a path of representations
up to a factor $t^2$, or a 1-jet  from $(\mathbb{R}, 0)$ to 
$( \hom(\Gamma, G), \rho) $, eg an element in $J^1_{0,\rho}(\mathbb{R}, \hom(\Gamma, G))$. 

\begin{Proposition}\cite{LubotzkyMagid, Weil}
 Weil's construction yields an isomorphism
 $$
 Z^1(\Gamma,\mathfrak g)\cong  T^{\mathrm{Zar}}_\rho
\hom(\Gamma, G)
 $$
that maps $ B^1(\Gamma,\mathfrak g)$ to the tangent space 
to the orbit by conjugation. 
\end{Proposition}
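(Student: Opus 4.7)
The plan is to prove this by identifying the Zariski tangent space at $\rho$ with the set of $\mathbb{R}[\epsilon]/(\epsilon^2)$-points of $\hom(\Gamma,G)$ lying over $\rho$. That is, a Zariski tangent vector is by definition a homomorphism $\tilde\rho\colon\Gamma\to G(\mathbb{R}[\epsilon]/(\epsilon^2))$ reducing to $\rho$ modulo $\epsilon$. Because $G$ is an affine algebraic group (with Lie algebra $\mathfrak g$), such a $\tilde\rho$ is uniquely of the form
$$
\tilde\rho(\gamma)=\bigl(I+\epsilon\, d(\gamma)\bigr)\rho(\gamma)
$$
for some function $d\colon\Gamma\to\mathfrak g$. (For $G=\mathrm{SL}_n^{\pm}(\mathbb R)$ one checks that $\det(I+\epsilon X)=1+\epsilon\,\mathrm{tr}(X)$, forcing $d(\gamma)\in\mathfrak{sl}_n$; the $\mathrm{GL}_n$ case is analogous.)

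Next I would verify that multiplicativity of $\tilde\rho$ is exactly the cocycle condition. A direct computation, using $\rho(\alpha)(I+\epsilon d(\beta))=\bigl(I+\epsilon\,\mathrm{Ad}(\rho(\alpha))d(\beta)\bigr)\rho(\alpha)$ and $\epsilon^2=0$, gives
$$
\tilde\rho(\alpha)\tilde\rho(\beta)=\bigl(I+\epsilon\bigl(d(\alpha)+\alpha\cdot d(\beta)\bigr)\bigr)\rho(\alpha\beta),
$$
so $\tilde\rho(\alpha\beta)=\tilde\rho(\alpha)\tilde\rho(\beta)$ holds for all $\alpha,\beta\in\Gamma$ if and only if $d(\alpha\beta)=d(\alpha)+\alpha\cdot d(\beta)$, which is precisely the definition of $Z^1(\Gamma,\mathfrak g)$. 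The correspondence $d\leftrightarrow\tilde\rho$ is clearly $\mathbb R$-linear, so this gives an isomorphism $Z^1(\Gamma,\mathfrak g)\cong T^{\mathrm{Zar}}_\rho\hom(\Gamma,G)$ matching Weil's formula.

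For the second statement, I would examine the orbit map $\mu\colon G\to\hom(\Gamma,G)$, $g\mapsto g\rho g^{-1}$, at the identity. Taking a tangent direction $X\in\mathfrak g$ and expanding $\exp(-\epsilon X)\rho(\gamma)\exp(\epsilon X)$ to first order in $\epsilon$ yields
$$
\exp(-\epsilon X)\rho(\gamma)\exp(\epsilon X)=\bigl(I+\epsilon(\mathrm{Ad}(\rho(\gamma))X-X)\bigr)\rho(\gamma),
$$
so the associated cocycle is $\gamma\mapsto\gamma\cdot X-X=\delta^0(X)(\gamma)$, i.e.\ the coboundary $\theta_X$. Hence $d\mu_{e}(\mathfrak g)$ maps onto $B^1(\Gamma,\mathfrak g)$ under the isomorphism, and conversely every coboundary arises this way. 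This identifies the tangent space to the conjugation orbit with $B^1(\Gamma,\mathfrak g)$, completing the statement.

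No step should be a serious obstacle: the only delicate point is the interpretation of the Zariski tangent space when $\Gamma$ is not assumed finitely presented, but the $\mathbb R[\epsilon]/(\epsilon^2)$-point description bypasses this entirely since it only uses the functor-of-points definition of the affine scheme $\hom(\Gamma,G)$. If one preferred to work with a presentation $\Gamma=\langle s_1,\dots,s_k\mid r_j\rangle$, one would realize $\hom(\Gamma,G)\subset G^k$ as the zero set of the words $r_j$ and linearize, recovering exactly the cocycle condition via the Fox calculus; this gives the same answer.
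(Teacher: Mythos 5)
Your proof is correct and is essentially the standard argument from the cited sources (Weil, Lubotzky--Magid); the paper itself does not give a proof of this proposition, it only cites it and records Weil's formula $\gamma\mapsto(1+t\,d(\gamma))\rho(\gamma)$ as a 1-jet. Your functor-of-points description via $\mathbb{R}[\epsilon]/(\epsilon^2)$-points, the check that multiplicativity is the cocycle identity, the $\det(I+\epsilon X)=1+\epsilon\,\operatorname{tr}(X)$ reduction to $\mathfrak{sl}_n$, and the linearization of the orbit map recovering $\delta^0(X)(\gamma)=\gamma X - X$ are all correct and match the classical treatment; the observation that the tangent-space description works for any finitely generated (not necessarily finitely presented) $\Gamma$ is also accurate since $\hom(\Gamma,G)$ is still an affine scheme and the $\mathbb{R}[\epsilon]/(\epsilon^2)$-points formulation is presentation-free.
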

  
Here $ T^{\mathrm{Zar}}_\rho
\hom(\Gamma, G)
 $ means the Zariski tangent space as scheme, as the 
 coordinate ring may be non-reduced, cf.~\cite{HeusenerPorti23}.
Using Theorem~\ref{Theorem:slices} on the existence of a slice, we get:

\begin{Theorem}
Let $[\rho]\in X(\Gamma, G)$ be a $\mathbb C$-irreducible character. Weil's construction factors to an isomorphism:
$$
 H^1(\Gamma,\mathfrak g)\cong  T^{\mathrm{Zar}}_{[\rho]}
X(\Gamma, G).
$$
\end{Theorem}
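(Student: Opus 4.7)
The plan is to combine the slice theorem with Weil's identification of $T^{\mathrm{Zar}}_\rho \hom(\Gamma,G)$. By Theorem~\ref{Theorem:slices}, at the $\mathbb{C}$-irreducible representation $\rho$ there is an analytic slice $S\subset\hom(\Gamma,G)$ with $G/\mathcal{Z}(G)\times S\cong G\cdot S$ bi-analytically onto an open neighborhood of the orbit; here I use Lemma~\ref{lemma:stabirr} to identify the stabilizer of $\rho$ with $\mathcal{Z}(G)$, which acts trivially by conjugation. The first step is to verify that this slice really parametrizes a neighborhood of $[\rho]$ in $X(\Gamma,G)$: from the bi-analytic decomposition one checks that every orbit meeting $G\cdot S$ intersects $S$ in exactly one point (if $g\cdot\rho'=\rho''$ with $\rho',\rho''\in S$, the two preimages in $G/\mathcal Z(G)\times S$ must coincide), and that all such orbits remain closed (irreducibility is an open condition, so nearby representations are still semi-simple and in particular have closed orbits). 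Consequently the analytic embedding $S\hookrightarrow\hom(\Gamma,G)$ descends to a local analytic isomorphism between a neighborhood of $\rho$ in $S$ and a neighborhood of $[\rho]$ in $X(\Gamma,G)$.

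The second step is to read off tangent spaces. Since $G\cdot S$ is open in $\hom(\Gamma,G)$, the bi-analytic product structure gives a scheme-theoretic decomposition
\[
T^{\mathrm{Zar}}_\rho\hom(\Gamma,G)\;=\;T_{e\mathcal{Z}(G)}\bigl(G/\mathcal{Z}(G)\bigr)\;\oplus\;T^{\mathrm{Zar}}_\rho S,
\]
where the first summand is the image of the orbit map's differential, i.e.\ the tangent to $G\cdot\rho$. By the previous proposition, the left-hand side is $Z^1(\Gamma,\mathfrak g)$ and the orbit tangent is $B^1(\Gamma,\mathfrak g)$. Therefore $T^{\mathrm{Zar}}_\rho S\cong Z^1(\Gamma,\mathfrak g)/B^1(\Gamma,\mathfrak g)=H^1(\Gamma,\mathfrak g)$, and combining with the local isomorphism $S\cong X(\Gamma,G)$ near $[\rho]$ from step one yields the desired isomorphism
\[
H^1(\Gamma,\mathfrak g)\;\cong\;T^{\mathrm{Zar}}_{[\rho]}X(\Gamma,G).
\]

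The main obstacle is the passage from the analytic slice statement (which is topological/analytic) to a statement about \emph{Zariski} tangent spaces, since the coordinate ring of $\hom(\Gamma,G)$ may be non-reduced and one must be careful that the slice inherits the correct scheme structure from Johnson--Millson. The clean way around this is that the slice is cut out by $\mathrm{Ad}(\rho)$-equivariant linear conditions complementary to $B^1$, so Weil's cocycle identification restricts to an identification of $T^{\mathrm{Zar}}_\rho S$ with any chosen complement of $B^1$ in $Z^1$; passing to the quotient gives $H^1$ intrinsically, independent of the complement.
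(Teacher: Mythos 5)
Your proof is correct and follows the same route the paper intends: the paper states the theorem immediately after the sentence ``Using Theorem~\ref{Theorem:slices} on the existence of a slice, we get,'' leaving the verification to the reader, and the more detailed version (Theorem~\ref{Theorem:Slice}, for general semi-simple $\rho$) given later confirms exactly your reading that $T^{\mathrm{Zar}}_\rho\mathcal S$ is a $G_\rho$-invariant complement of $B^1(\Gamma,\mathfrak g)$ inside $Z^1(\Gamma,\mathfrak g)$. You supply the two details the paper leaves implicit --- that nearby orbits are closed (because $\mathbb{C}$-irreducibility is open and implies semi-simplicity) so the slice really maps onto a neighborhood of $[\rho]$ in $X(\Gamma,G)$, and that the bi-analytic decomposition gives the tangent-space splitting $T^{\mathrm{Zar}}_\rho\hom(\Gamma,G)=B^1(\Gamma,\mathfrak g)\oplus T^{\mathrm{Zar}}_\rho S$ --- and you rightly flag the non-reducedness issue and resolve it by observing that the identification is intrinsic once one passes to the quotient $Z^1/B^1$. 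The only small imprecision is the phrase ``the slice is cut out by $\mathrm{Ad}(\rho)$-equivariant linear conditions'': it is the image of the slice under a suitable chart (the map $\Phi$ in the paper's later proof) that lands in a linear complement $H$ of the coboundaries, not the slice itself, but this does not affect the conclusion on Zariski tangent spaces.
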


We discuss the general case (without assuming irreducibility) in Section~\ref{Section:slice}.
We also need  a result in the zero-th cohomology group in the 
$\mathbb C$-irreducible case.

\begin{Lemma}
\label{Lemma:H0trivial}
 If $\rho $ is $\mathbb C$-irreducible, then the space of invariants of the Lie algebra is trivial:
 $\mathfrak g^{\mathrm{Ad} {\rho(\Gamma)}}=0$.
 In particular $H^0(\Gamma, \mathfrak g)= 0$.
\end{Lemma}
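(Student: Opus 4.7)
The plan is to combine Schur's lemma with the trace-zero condition defining $\mathfrak{sl}_n$. First I would unpack the definition: an element of $\mathfrak g^{\mathrm{Ad}\rho(\Gamma)}$ is a matrix $X\in\mathfrak{sl}_n(\mathbb R)$ with $\rho(\gamma)X=X\rho(\gamma)$ for every $\gamma\in\Gamma$, i.e.\ an endomorphism of $\mathbb R^n$ commuting with the image of $\rho$.

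Next I would extend scalars and view $X$ as a $\mathbb C$-linear operator on $\mathbb C^n$. For any $\lambda\in\mathbb C$, the $\lambda$-eigenspace of $X$ in $\mathbb C^n$ is $\rho(\Gamma)$-invariant because $X$ commutes with every $\rho(\gamma)$. Since $\rho$ is $\mathbb C$-irreducible and $X$ (being a complex operator on a nonzero finite-dimensional space) has at least one eigenvalue, some eigenspace must be all of $\mathbb C^n$. Thus $X=\lambda\,\mathrm{Id}$ for some $\lambda\in\mathbb C$; since $X$ is real, $\lambda\in\mathbb R$, and since $\mathrm{tr}(X)=0$, we get $\lambda=0$, hence $X=0$. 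This is the essential content.

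Finally, the second assertion $H^0(\Gamma,\mathfrak g)=0$ is immediate from the identification $H^0(\Gamma,\mathfrak g)\cong\mathfrak g^{\Gamma}$ recorded in \eqref{eqn:H0}, where the $\Gamma$-action is via $\mathrm{Ad}\circ\rho$. There is no real obstacle; the only point that deserves attention is that one truly needs $\mathbb C$-irreducibility rather than $\mathbb R$-irreducibility, since over $\mathbb R$ the commutant can be strictly larger (the $\mathrm{SO}(2)$-example mentioned after the definitions is precisely such a case, where a nonzero skew-symmetric matrix lies in the commutant). Working on $\mathbb C^n$ is what rules this out.
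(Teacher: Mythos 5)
Your proof is correct and uses essentially the same mechanism as the paper, namely Schur's lemma via eigenspaces over $\mathbb C$. The only difference is cosmetic: you apply Schur directly to the Lie algebra element $X$ (a $\lambda$-eigenspace of $X$ in $\mathbb C^n$ is $\rho$-invariant, hence $X=\lambda\,\mathrm{Id}$, and the trace-zero condition forces $\lambda=0$), whereas the paper exponentiates $a$ to a one-parameter subgroup of the stabilizer and invokes Lemma~\ref{lemma:stabirr}. Your version is a touch more direct since it avoids passing through the group and makes the final step ($\lambda=0$ from $\mathrm{tr}\,X=0$) explicit; your closing remark about why $\mathbb C$-irreducibility, not merely $\mathbb R$-irreducibility, is needed is also apt.
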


\begin{proof}
 Assume $H^0(\Gamma, \mathfrak g)\neq 0$,   by~\eqref{eqn:H0}
 there exists $0\neq a\in  \mathfrak g^{\mathrm{Ad}{\rho(\Gamma)}}$. Then
 the one-parameter group $\{\exp ( t\, a)\mid t\in\mathbb R\}$
 commutes with the image of $\rho$. By considering the eigenspaces of  $\exp (  a)$, this commutativity contradicts that $\rho$ is 
 $\mathbb C$-irreducible by
 Lemma~\ref{lemma:stabirr}.
\end{proof}

 \subsection{Orbifolds}


We relate the group cohomology with orbifold cohomology, that can be defined as the simplicial cohomology
for a CW-complex structure on the orbifold \cite[\S3]{PortiDim}. 
An orbifold is \emph{very good} if it has a finite orbifold covering that is 
a manifold, and in this case the orbifold cohomology is the equivariant cohomology of the manifold
covering.
We are also interested in orbifolds that are \emph{aspherical}:
the universal covering is a contractible manifold. 

In this paper we consider 2-dimensional orbifolds with negative Euler characteristic (eg hyperbolic) and
hyperbolic 3-orbifolds, hence very good and aspherical.

\begin{Lemma} If $\mathcal O^n$ is an aspherical and very good orbifold, then there is a natural isomorphism
$$
H^i(\mathcal O^n, V)\cong H^i(\pi_1(\mathcal O^n), V)
$$
\end{Lemma}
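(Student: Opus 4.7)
The plan is to use the contractible universal cover $\widetilde{\mathcal O^n}$ as a common tool for both cohomologies. Set $\Gamma=\pi_1(\mathcal O^n)$ and let $M\to\mathcal O^n$ be a finite orbifold cover by a manifold, supplied by the very good hypothesis, with (finite) deck group $F$, so that we have a short exact sequence $1\to\pi_1(M)\to\Gamma\to F\to 1$. Asphericity then gives that $\widetilde M=\widetilde{\mathcal O^n}$ is a contractible manifold on which $\Gamma$ acts properly with finite point stabilizers.

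First I would equip $\widetilde M$ with a $\Gamma$-equivariant CW-structure, for instance by lifting a suitable cellular decomposition of $\mathcal O^n$ to $\widetilde M$. Contractibility of $\widetilde M$ makes the cellular chain complex $C_\ast(\widetilde M;\mathbb R)$ a resolution of the trivial $\mathbb R[\Gamma]$-module $\mathbb R$. In each degree this complex decomposes as $C_i(\widetilde M;\mathbb R)\cong\bigoplus_{[\sigma]}\mathbb R[\Gamma/\Gamma_\sigma]$, where $[\sigma]$ ranges over $\Gamma$-orbits of $i$-cells and $\Gamma_\sigma$ is the (finite) stabilizer of $\sigma$. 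The crucial observation is that each summand is projective over $\mathbb R[\Gamma]$: by Maschke's theorem the finite group algebra $\mathbb R[\Gamma_\sigma]$ is semisimple in characteristic zero, so $\mathbb R$ is projective over $\mathbb R[\Gamma_\sigma]$, and induction to $\Gamma$ preserves projectivity. Hence $C_\ast(\widetilde M;\mathbb R)$ is a projective $\mathbb R[\Gamma]$-resolution of $\mathbb R$ and can be used to compute $H^\ast(\Gamma,V)$ for any $\Gamma$-module $V$.

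The remaining step is a direct identification of cochain complexes. On the one hand, derived functor calculus with this resolution yields $H^i(\Gamma,V)=H^i(\mathrm{Hom}_{\mathbb R[\Gamma]}(C_\ast(\widetilde M;\mathbb R),V))$. On the other hand, the same complex $\mathrm{Hom}_{\mathbb R[\Gamma]}(C_\ast(\widetilde M;\mathbb R),V)$ is precisely the $\Gamma$-equivariant cellular cochain complex of $\widetilde M$ with coefficients in $V$; descending through the normal subgroup $\pi_1(M)\subset\Gamma$, it coincides with the $F$-equivariant cochain complex of $M$ with coefficients in the local system associated to $V$, which is exactly what computes the orbifold cohomology $H^i(\mathcal O^n,V)$ according to the definition recalled in the paper. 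Naturality in $V$ and in $\mathcal O^n$ is automatic from the construction.

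The main obstacle is the first, geometric step: producing the $\Gamma$-equivariant CW decomposition of $\widetilde M$ with finite cell stabilizers and checking that the resulting equivariant cochain complex really agrees with the paper's definition of orbifold cohomology. Both points are standard for very good orbifolds, since one can first refine a CW-structure on $M$ so that $F$ acts cellularly and then lift to $\widetilde M$; nonetheless this is where the geometric hypotheses of the lemma are being used, and it deserves a careful verification so that the algebraic identification in the previous paragraph is transparent. Once this piece is in place, the rest is semisimplicity of finite group algebras over $\mathbb R$ together with contractibility of $\widetilde M$.
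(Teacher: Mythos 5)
Your argument is correct, and it does the job the paper punts to the citation of Proposition~3.4 of \cite{PortiDim}, but the technical route is genuinely different from the one the paper sketches. The paper's parenthetical hint is ``for an aspherical manifold this is standard, and for a very good orbifold use a manifold covering and work equivariantly'': the intended chain of identifications is $H^i(\mathcal O^n,V)\cong H^i_F(M;V)\cong H^i(M;V)^F\cong H^i(\pi_1(M),V)^F\cong H^i(\Gamma,V)$, where the collapse to $F$-invariants uses that $F$ is finite and the coefficients are real, and the last step is Lyndon--Hochschild--Serre under the same hypothesis. You instead bypass the manifold cover in the core of the argument: you exhibit the cellular chain complex $C_\ast(\widetilde{\mathcal O^n};\mathbb R)$ as a \emph{projective} $\mathbb R[\Gamma]$-resolution of $\mathbb R$, with projectivity of each $\mathbb R[\Gamma/\Gamma_\sigma]$ supplied by Maschke's theorem for the finite cell stabilizers $\Gamma_\sigma$, and then read off both cohomologies from the single complex $\mathrm{Hom}_{\mathbb R[\Gamma]}(C_\ast,V)$. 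That puts the two uses of the hypotheses in one transparent place --- asphericity makes $C_\ast$ a resolution, finiteness of isotropy plus characteristic zero makes it projective --- and avoids spectral-sequence or transfer bookkeeping. The cover $M$ appears in your argument only at the end, to match $\mathrm{Hom}_{\mathbb R[\Gamma]}(C_\ast,V)$ with the definition of orbifold cohomology as $F$-equivariant cohomology of $M$; that step, and the existence of a $\Gamma$-CW structure on $\widetilde{\mathcal O^n}$ with finite cell stabilizers lifted from a cell structure on the orbifold, are the points you flag as needing verification, and you are right that they are exactly where ``very good'' enters. Both are standard (refine a cell structure on $M$ to be $F$-equivariant, descend, then lift), so the proof is complete once written out.
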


See for instance \cite[Prop.~3.4]{PortiDim} for a proof (for an aspherical manifold this is standard, and for
a very good orbifold use a manifold covering and work equivariantly).

We shall need  Poincar\'e duality for cohomology with coefficients. Let $V_1$ and $V_2$ be $\Gamma$-modules and 
let $\varphi\colon V_1\times V_2\to \mathbb R$ be a  pairing that is $\Gamma$-equivariant. 
Combined with the cup product it induces a pairing
in cohomology, that we denote by $\varphi(\cdot\cup\cdot)$.
\begin{multline*} \varphi(\cdot\cup\cdot)\colon
 H^i(\mathcal O^n, V_1)\times 
 H^{j}(\mathcal O^n, \partial \mathcal O^n; V_2)\overset\cup\to 
 H^{i+j}(\mathcal O^n, \partial \mathcal O^n; V_1\otimes V_2 )
 \\
 \overset\varphi\to 
 H^{i+j}(\mathcal O^n, \partial \mathcal O^n; \mathbb R).
\end{multline*}

\begin{Theorem}[Poincar\'e duality with coefficients]
\label{TheoremPD}
 Let $\mathcal O^n$ be a  compact, connected, orientable, and very good n-orbifold.
 For $V_1$, $V_2$ and $\varphi$ as above; if $\varphi$ is a perfect pairing, then the product
 $$ \varphi(\cdot\cup\cdot)\colon
 H^k(\mathcal O^n, V_1)\times 
 H^{n-k}(\mathcal O^n, \partial \mathcal O^n; V_2)\to 
 H^n(\mathcal O^n, \partial \mathcal O^n; \mathbb R)
 \cong\mathbb R
 $$
 is a perfect pairing.
\end{Theorem}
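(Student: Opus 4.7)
The plan is to reduce the statement to classical Poincar\'e--Lefschetz duality with local coefficients on a smooth manifold, by passing to a finite regular manifold cover and then taking invariants, exploiting the fact that we work over $\mathbb{R}$.

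First, since $\mathcal{O}^n$ is very good, I would choose a finite regular orbifold covering $p\colon M\to \mathcal{O}^n$ by a compact, connected manifold $M$ with deck group $F$. Because $\mathcal{O}^n$ is orientable, $F$ acts on $M$ preserving an orientation and a collar of $\partial M$, so there is an $F$-invariant fundamental class in $H_n(M,\partial M;\mathbb R)$. Pulling back $V_1$ and $V_2$ along $\pi_1(M)\hookrightarrow \pi_1(\mathcal{O}^n)=\Gamma$ turns them into $\pi_1(M)$-modules with a compatible $F$-action, and the asphericity plus very-goodness gives equivariant identifications
\begin{equation*}
H^i(\mathcal{O}^n; V)\cong H^i(M; V)^{F},
\qquad
H^i(\mathcal{O}^n,\partial\mathcal{O}^n; V)\cong H^i(M,\partial M; V)^{F},
\end{equation*}
where invariants are exact in $\mathbb R$-vector spaces because $|F|$ is invertible (averaging).

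Second, I would invoke ordinary Poincar\'e--Lefschetz duality with local coefficients on $M$: the cup--product pairing
\begin{equation*}
\varphi(\cdot\cup\cdot)\colon H^k(M; V_1)\times H^{n-k}(M,\partial M; V_2)\longrightarrow H^n(M,\partial M;\mathbb R)\cong\mathbb R
\end{equation*}
is perfect (this is the standard twisted duality, see e.g.\ Spanier or Hatcher's treatment with local systems, which applies since $\varphi$ is a perfect $\pi_1(M)$-equivariant pairing). Naturality of the cup product together with $F$-equivariance of the fundamental class implies that this pairing on $M$ is $F$-equivariant.

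Third, I would deduce the orbifold statement by taking $F$-invariants. Under the identifications above, the orbifold pairing corresponds to the restriction of the $M$-pairing to $F$-invariants on both sides. Any $F$-equivariant perfect pairing of finite-dimensional $\mathbb R[F]$-modules restricts to a perfect pairing between the invariants of the two modules (using the splitting $V=V^F\oplus \ker(\tfrac1{|F|}\sum_{f\in F}f)$ and that the kernel is paired to zero with $V^F$ by equivariance), which gives perfectness of $\varphi(\cdot\cup\cdot)$ on the orbifold.

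The main obstacle I anticipate is bookkeeping: verifying that the cup product on orbifold cochains (defined via the chosen CW-structure of \cite{PortiDim}) coincides, under the equivariant isomorphism, with the $F$-invariant part of the cup product on the manifold cover, and that the cap product with the fundamental class is $F$-equivariant. Once this compatibility is in place, everything else is formal.
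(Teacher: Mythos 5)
Your proposal is correct and follows exactly the route the paper indicates: the paper does not prove Theorem~\ref{TheoremPD} itself but refers to \cite[Proposition 3.4]{PortiDim}, and the one-line sketch it gives for the preceding lemma (``for a very good orbifold use a manifold covering and work equivariantly'') is precisely the strategy you carried out, namely passing to a finite regular manifold cover, invoking classical Poincar\'e--Lefschetz duality with local coefficients there, and taking $F$-invariants via averaging (exact over $\mathbb{R}$). One small remark: asphericity is not among the hypotheses of the theorem and is not actually needed for the identification of orbifold cohomology with $F$-invariants of the cover's cohomology (it is only relevant when passing to group cohomology), so its appearance in your argument is superfluous but harmless.
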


See for instance 
\cite[Proposition 3.4]{PortiDim}
for a proof of this orbifold version of Poincar\' e duality.


\subsection{Obstructions to integrability}
\label{Section:tangent}

Here we review the theory of obstructions to integrability. We start by reviewing the following theorem of Goldman:

\begin{Proposition}[\cite{Goldman}]
\label{Proposition:Goldman}
Set $\Gamma= \pi_1(\mathcal O^2)$ and
 assume $\rho\colon\Gamma\to G$ is $\mathbb C$-irreducible. Then:
 \begin{enumerate}[(i)]
  \item $\hom(\Gamma, G)$ is  smooth at $\rho$ and 
  $Z^1(\Gamma,\mathfrak g)$ is isomorphic to $T_\rho\hom(\Gamma, G)$.
  \item $X(\Gamma, G)$ is smooth at the character $[\rho]$ and 
  $H^1(\Gamma,\mathfrak g)$ is isomorphic to $T_{[\rho]} X(\Gamma, G)$.
 \end{enumerate}
 Furthermore, $X(\Gamma, G)$ is locally homeomorphic to the topological quotient  $\hom(\Gamma, G)/G$.
\end{Proposition}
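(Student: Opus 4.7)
The plan is to combine Weil's tangent space description with Goldman's obstruction-theoretic argument, and then transfer the conclusion to the quotient via the Johnson--Millson slice theorem (Theorem~\ref{Theorem:slices}).

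For part (i), Weil's construction already identifies $Z^1(\Gamma,\mathfrak{g})$ with the Zariski tangent space at $\rho$, so the real content is smoothness. I would prove smoothness by formal obstruction theory: obstructions to extending a first-order deformation $z\in Z^1(\Gamma,\mathfrak{g})$ to a $k$-th order arc in $\hom(\Gamma,G)$ lie in $H^2(\Gamma,\mathfrak{g})$, the leading obstruction being represented by $\tfrac{1}{2}[z\cup z]$, computed with the Lie bracket $\mathfrak{g}\times\mathfrak{g}\to\mathfrak{g}$ and the cup product formula \eqref{eqn:cup}. It therefore suffices to establish $H^2(\pi_1(\mathcal{O}^2),\mathfrak{g})=0$. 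Because $\mathcal{O}^2$ is very good and aspherical, this coincides with the orbifold cohomology $H^2(\mathcal{O}^2,\mathfrak{g})$. When $\partial\mathcal{O}^2\neq\emptyset$, the orbifold has the homotopy type of a $1$-complex, so $H^2$ vanishes for purely dimensional reasons. When $\mathcal{O}^2$ is closed and orientable, I would apply Poincar\'e duality (Theorem~\ref{TheoremPD}) to the Killing form $\kappa\colon\mathfrak{g}\times\mathfrak{g}\to\mathbb{R}$, identifying $H^2(\mathcal{O}^2,\mathfrak{g})$ with the dual of $H^0(\mathcal{O}^2,\mathfrak{g})$, which vanishes by Lemma~\ref{Lemma:H0trivial} since $\rho$ is $\mathbb{C}$-irreducible. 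The closed non-orientable case reduces to the orientable one by passing to the orientable double cover and using a transfer (averaging) argument. With $H^2=0$, all formal obstructions vanish; hence $\hom(\Gamma,G)$ is smooth at $\rho$ and (i) follows.

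For (ii), I would invoke the analytic slice $S$ at $\rho$ provided by Theorem~\ref{Theorem:slices}, combined with Lemma~\ref{lemma:stabirr} which gives $\mathrm{Stab}_G(\rho)=\mathcal{Z}(G)$. The bi-analytic isomorphism $G/\mathcal{Z}(G)\times S\cong G\cdot S$, together with smoothness of $\hom(\Gamma,G)$ at $\rho$, forces $S$ to be smooth of dimension $\dim Z^1(\Gamma,\mathfrak{g})-\dim(G\cdot\rho)=\dim H^1(\Gamma,\mathfrak{g})$. Since $\mathcal{Z}(G)$ acts trivially by conjugation, $X(\Gamma,G)$ locally agrees with $S$; therefore $X(\Gamma,G)$ is smooth at $[\rho]$, and its tangent space is naturally $Z^1(\Gamma,\mathfrak{g})/B^1(\Gamma,\mathfrak{g})=H^1(\Gamma,\mathfrak{g})$.

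For the final assertion, I would use that $\mathbb{C}$-irreducibility is an open condition on $\hom(\Gamma,G)$, so in a small $G$-invariant neighborhood of $\rho$ every representation is still $\mathbb{C}$-irreducible, hence semi-simple, so by Richardson's theorem every nearby orbit is closed. Consequently the GIT quotient $X(\Gamma,G)=\hom(\Gamma,G)/\!/G$ and the set-theoretic quotient $\hom(\Gamma,G)/G$ coincide on a neighborhood of $[\rho]$. The main obstacle I anticipate is the vanishing $H^2=0$ in the closed case; everything afterwards is a matter of assembling the slice theorem with Lemmas~\ref{lemma:stabirr}--\ref{Lemma:H0trivial} and the openness of $\mathbb{C}$-irreducibility.
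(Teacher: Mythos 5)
Your reconstruction of (i), (ii), and the final assertion is essentially the route the paper indicates: the remark following Lemma~\ref{Lemma:GoldmanObstructions} records that Goldman's proof rests on the vanishing $H^2(\Gamma,\mathfrak{g})=0$ obtained from Lemma~\ref{Lemma:H0trivial} together with Poincar\'e duality (Theorem~\ref{TheoremPD}), and the passage from $\hom(\Gamma,G)$ to $X(\Gamma,G)$ via the Johnson--Millson slice (Theorem~\ref{Theorem:slices}), Lemma~\ref{lemma:stabirr}, and openness of $\mathbb{C}$-irreducibility plus Richardson's closed-orbit theorem is the standard completion. For the closed orientable and with-boundary cases your argument is sound.

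The one step that does not hold up is the closed non-orientable case via transfer. Transfer gives an injection $H^2(\Gamma,\mathfrak{g})\hookrightarrow H^2(\Gamma^{+},\mathfrak{g})$, where $\Gamma^{+}$ is the index-two orientation-preserving subgroup, so you would need $H^2(\Gamma^{+},\mathfrak{g})=0$, i.e.\ $\mathfrak{g}^{\Gamma^{+}}=0$. But $\mathbb{C}$-irreducibility of $\rho$ on $\Gamma$ does \emph{not} force $\mathbb{C}$-irreducibility of $\rho|_{\Gamma^{+}}$: if $\rho=\mathrm{Ind}_{\Gamma^{+}}^{\Gamma}\sigma$ is induced from a $\Gamma^{+}$-irreducible $\sigma$ not isomorphic to its outer conjugate $\sigma^{\tau}$, then $\rho$ is $\mathbb{C}$-irreducible (Mackey), yet $\rho|_{\Gamma^{+}}\cong\sigma\oplus\sigma^{\tau}$ is reducible and a block-scalar element shows $\mathfrak{g}^{\Gamma^{+}}\neq 0$, so $H^2(\Gamma^{+},\mathfrak{g})\neq 0$. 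Equivalently, by twisted Poincar\'e duality $H^2(\Gamma,\mathfrak{g})$ is dual to $(\mathfrak{g}\otimes\omega)^{\Gamma}$ with $\omega$ the orientation character, and $\mathrm{diag}(I_m,-I_m)$ is a nonzero invariant when the two blocks have equal size; so the vanishing you rely on can genuinely fail and your argument breaks at that point. Note that the paper is careful here: it cites \cite{Goldman} (orientable surfaces), applies Proposition~\ref{Proposition:Goldman} only in orientable settings, explicitly assumes in Theorem~\ref{Theorem:nonorientable} that $\rho$ is $\mathbb{C}$-irreducible \emph{on the orientation covering}, and proves the non-orientable $H^2$-vanishing it needs (Lemma~\ref{LemmaH20}) by a module-specific sign argument rather than by transfer.
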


The statement of this proposition deals with
$T$ instead of $T^{\mathrm{Zar}}$, because in the smooth case,
the Zariski tangent space equals the standard tangent space (here smooth means smooth 
not only as a variety but as scheme,
in particular reduced).

A key tool for
Proposition~\ref{Proposition:Goldman}
is Goldman's obstruction
to integrability, defined in \cite{Goldman}. 
The first obstruction  to integrability uses the cup product
combined with the Lie bracket, as in \eqref{eqn:cup}. At the level of 1-cocycles it is defined as follows:
for $\sigma,\varsigma\in  Z^1(\Gamma,\mathfrak{g})$, 
$$
\begin{array}{rcl}
[\sigma\cup\varsigma]\colon
\Gamma\times\Gamma & \to & \mathfrak g \\
(\gamma_1,\gamma_2) & \mapsto & [\sigma(\gamma_1), \mathrm{Ad}_{\rho(\gamma_1)}\varsigma(\gamma_2)] 
\end{array}
$$
As the Lie bracket is skew-symmetric, by Lemma~\ref{lemma:symmetry} the induced product in cohomology 
is a symmetric bilinear form 
%
$$
[\cdot\cup\cdot]\colon H^1(\Gamma, \mathfrak{g})
\times H^1(\Gamma, \mathfrak{g}) \to H^2(\Gamma, \mathfrak{g}) 
$$
For a tangent vector $z\in Z^1(\Gamma,\mathfrak g)$, the first obstruction to integrability of $z$
is the class  of $[z\cup z]$ in $H^2( \Gamma, \mathfrak{g})$, see
\cite{Goldman,HPS} for details.

The element $z\in Z^1(\Gamma,\mathfrak g)$ can be seen as a deformation of representations at first order,
a 1-jet $J^1_{0,\rho}(\mathbb R,\hom(\Gamma, G) )  $.
When $[z\cup z]\sim 0$   there is a deformation up to second order, equivalently it extends to  a 2-jet  in  $J^2_{0,\rho}(\mathbb R,\hom(\Gamma, G) )  $. We state the existence of Goldman's obstructions as a lemma (see  \cite{Goldman}
or \cite{HPS} for a proof):

\begin{Lemma} [Goldman \cite{Goldman}]
\label{Lemma:GoldmanObstructions}
 An $n$-jet in  $J^n_{0,\rho}(\mathbb R,\hom(\Gamma, G) )  $ has a natural obstruction in
$H^2(\Gamma, \mathfrak{g})$ that vanishes if and only if it extends to an
$(n+1)$-jet.
\end{Lemma}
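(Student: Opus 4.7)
The plan is to represent an $n$-jet at $\rho$ by a tuple $(u_1,\ldots,u_n)$ of 1-cochains in $C^1(\Gamma,\mathfrak g)$ encoding a formal path
$$
\rho_t(\gamma)=\exp\bigl(t\,u_1(\gamma)+t^2 u_2(\gamma)+\cdots+t^n u_n(\gamma)\bigr)\,\rho(\gamma),
$$
subject to the homomorphism relation $\rho_t(\alpha\beta)\equiv \rho_t(\alpha)\rho_t(\beta)\pmod{t^{n+1}}$. Expanding this congruence with the Baker--Campbell--Hausdorff formula, the coefficient of $t^k$ produces, for each $1\le k\le n$, an equation of the shape $\delta u_k = P_k(u_1,\ldots,u_{k-1})$ in $C^2(\Gamma,\mathfrak g)$, where $P_k$ is a universal expression in iterated Lie brackets; at $k=1$ this recovers the usual cocycle condition $\delta u_1=0$.

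Extending the $n$-jet to an $(n+1)$-jet amounts to finding $u_{n+1}\in C^1(\Gamma,\mathfrak g)$ with $\delta u_{n+1}=\mathrm{obs}_{n+1}(u_1,\ldots,u_n)$, where $\mathrm{obs}_{n+1}:=P_{n+1}(u_1,\ldots,u_n)$ is the $t^{n+1}$-coefficient of the BCH expansion of $\rho_t(\alpha)\rho_t(\beta)\rho_t(\alpha\beta)^{-1}$. This is solvable exactly when the class $[\mathrm{obs}_{n+1}]\in H^2(\Gamma,\mathfrak g)$ vanishes. It therefore suffices to check that $\mathrm{obs}_{n+1}$ is a 2-cocycle and that its cohomology class depends only on the $n$-jet, not on the representative tuple $(u_1,\ldots,u_n)$.

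The main obstacle is the cocycle property $\delta\,\mathrm{obs}_{n+1}=0$. I would deduce this by expanding the associativity identity $\rho_t((\alpha\beta)\gamma)=\rho_t(\alpha(\beta\gamma))$ to order $t^{n+1}$: by hypothesis on $(u_1,\ldots,u_n)$, the contributions of orders $\le n$ already agree, and the coefficient of $t^{n+1}$ is precisely $(\delta\,\mathrm{obs}_{n+1})(\alpha,\beta,\gamma)$. Conceptually, packaging $\sum t^k u_k$ as a truncated degree-$1$ element of the DGLA $C^*(\Gamma,\mathfrak g)$ exhibits the $P_k$ as the Maurer--Cartan right-hand sides, so $\delta\,\mathrm{obs}_{n+1}=0$ becomes formal from $\delta^2=0$ combined with graded Jacobi. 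Well-definedness of $[\mathrm{obs}_{n+1}]$ under a change of representative $(u_1,\ldots,u_n)$ of the same $n$-jet reduces, via BCH, to conjugating $\rho_t$ by $\exp(t v_1+\cdots+t^n v_n)$, which a direct computation shows modifies $\mathrm{obs}_{n+1}$ by a coboundary. The ``only if'' implication is then tautological (an honest extension makes $\mathrm{obs}_{n+1}$ a coboundary by construction), while the ``if'' implication is the solvability of $\delta u_{n+1}=\mathrm{obs}_{n+1}$ once the class vanishes.
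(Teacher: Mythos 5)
Your argument is correct and coincides with the standard proof in the paper's cited references \cite{Goldman,HPS}; the paper itself does not prove this lemma but points the reader to those sources, where exactly this scheme (jets parametrized as $\exp\bigl(\sum_{k=1}^n t^k u_k\bigr)\rho$, obstruction as the $t^{n+1}$-coefficient of the homomorphism defect, cocycle property from associativity / Maurer--Cartan formalism) appears. One small redundancy: with the exponential parametrization the tuple $(u_1,\dots,u_n)$ is uniquely determined by the $n$-jet, since comparing $\exp\bigl(\sum t^k u_k\bigr)\rho$ and $\exp\bigl(\sum t^k u_k'\bigr)\rho$ modulo $t^{n+1}$ forces $u_k=u_k'$ for $k\le n$; so your well-definedness check under change of representative is vacuous here, and the conjugation argument you sketch is really what one would use to show the obstruction class depends only on the conjugacy class of the jet, a stronger statement than the lemma requires.
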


In the proof of 
Proposition~\ref{Proposition:Goldman}, Goldman uses that 
 $H^2(\Gamma, \mathfrak{g}) =0$,
 by Lemma~\ref{Lemma:H0trivial} and Poincar\'e duality, 
so all obstructions vanish.

When $H^2(\Gamma, \mathfrak{g})$ does not vanish, for 
certain classes of groups $\Gamma$ including $\pi_1(\mathcal O^2)$, Goldman \cite{GoldmanMaryland}, 
Goldman and Millson  \cite{GoldmanMillson},  and Simpson \cite{Simpson} prove that  the cup product is
the only obstruction to integrability, hence the singularities in 
$\hom(\pi_1(\mathcal O^2), G)$ are at most quadratic.
In terms of analytic geometry, the result is as follows:

\begin{Theorem}[\cite{GoldmanMaryland,GoldmanMillson,Simpson}]
\label{Theorem:Quadratic} Let $\mathcal O^2$ be closed and oriented. If the representation  $\rho\in \hom(\pi_1(\mathcal O^2)   , G)$ is semi-simple, then the analytic germ of 
 $\hom(\pi_1(\mathcal O^2), G)$ at $\rho$ is 
 analytically equivalent to the quadratic cone
 $$
 \{ z \in Z^1(\Gamma,G)\mid [ z\cup z] \sim 0 \}.
 $$
\end{Theorem}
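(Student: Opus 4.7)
The plan is to exploit formality of the differential graded Lie algebra (DGLA) governing deformations of $\rho$. Let $L^\bullet = C^\bullet(\Gamma, \mathfrak g)$ with the bar differential $\delta$ and graded Lie bracket $[\cdot\cup\cdot]$ induced from the cup product and $[\cdot,\cdot]_{\mathfrak g}$; equivalently, via de Rham theory on the orbifold, $L^\bullet = \Omega^\bullet(\mathcal O^2, \mathfrak g_{\mathrm{Ad}\rho})$. Extending Weil's construction to higher orders, an element $z\in L^1$ close to $0$ gives a representation close to $\rho$ if and only if $z$ satisfies the Maurer--Cartan equation $\delta z + \tfrac12[z\cup z]=0$. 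Thus the analytic germ of $\hom(\Gamma,G)$ at $\rho$ coincides with the germ at $0$ of the Maurer--Cartan scheme inside $L^1$, and by Lemma~\ref{Lemma:GoldmanObstructions} the obstructions to extending an $n$-jet all lie in $H^2(\Gamma,\mathfrak g)$, the quadratic one being precisely the class of $[z_1\cup z_1]$.

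The key analytic step is to establish formality of $L^\bullet$. Since $\mathcal O^2$ is closed and orientable, it is a Riemann surface with (possibly) cone points, hence a K\"ahler orbifold. Semi-simplicity of $\rho$ allows Corlette's theorem to produce a $\rho$-equivariant harmonic metric on the flat bundle $\mathfrak g_{\mathrm{Ad}\rho}$, passing through a finite manifold cover and descending equivariantly. Hodge theory then yields a decomposition $L^\bullet=\mathcal H^\bullet\oplus \mathrm{im}\, d\oplus \mathrm{im}\, d^*$, where $\mathcal H^\bullet$ is the space of harmonic forms. The K\"ahler identities in complex dimension one, combined with the $\mathrm{Ad}$-invariance of the pairing used to define $d^*$, imply that $\mathcal H^\bullet$ is closed under the graded bracket and that the inclusion $\mathcal H^\bullet \hookrightarrow L^\bullet$ is a DGLA quasi-isomorphism, with trivial differential on the source. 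This is the formality statement of Goldman--Millson and Simpson.

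The formality transfer principle, proved via the Kuranishi method, then yields an analytic isomorphism between the germs of Maurer--Cartan varieties of quasi-isomorphic DGLAs. In the formal model $\mathcal H^\bullet$ the Maurer--Cartan equation reduces to $[z\cup z]=0$, since the differential vanishes. Combined with the smooth coboundary directions $B^1\subset Z^1$, which correspond to tangent vectors to the conjugation orbit and contribute a smooth product factor, one obtains an analytic equivalence between a neighborhood of $\rho$ in $\hom(\Gamma,G)$ and the quadratic cone $\{z\in Z^1(\Gamma,\mathfrak g):[z\cup z]\sim 0\}$, as claimed.

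The main difficulty is the formality step. Producing the harmonic metric on the flat bundle depends on Corlette's nonabelian Hodge correspondence, which is precisely where semi-simplicity of $\rho$ enters: without it the harmonic metric need not exist and formality can fail. Turning the resulting DGLA quasi-isomorphism into an analytic (rather than merely formal) equivalence of germs requires the Kuranishi-type convergence arguments of Goldman--Millson and Simpson, which are technically delicate but standard in this setting. The remaining obstruction analysis is essentially \emph{a priori} from Lemma~\ref{Lemma:GoldmanObstructions}: formality is what guarantees that no higher-order obstruction beyond the quadratic $[z\cup z]$ survives.
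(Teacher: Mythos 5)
The paper does not prove Theorem~\ref{Theorem:Quadratic}; it states the result with attribution to Goldman, Goldman--Millson, and Simpson, so there is no ``paper's own proof'' to compare against. Your proposal correctly identifies the strategy those references use: governance of the deformation problem by a differential graded Lie algebra, formality of that DGLA via Hodge theory once a harmonic metric is produced (Corlette/Simpson, using semi-simplicity of $\rho$), and then the Kuranishi transfer principle to turn the resulting DGLA quasi-isomorphism into an analytic equivalence of the germ of $\hom(\Gamma,G)$ at $\rho$ with the quadratic cone. That overall route is the right one and is a genuinely different level of detail than the paper, which simply cites it.

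However, there is a concrete error in the formality step as you phrased it. You claim that the K\"ahler identities imply the space of harmonic forms $\mathcal H^\bullet$ is closed under the graded bracket, so that the inclusion $\mathcal H^\bullet\hookrightarrow L^\bullet$ is a DGLA quasi-isomorphism. This is false: the wedge-bracket of two harmonic $\mathfrak g$-valued forms is in general not harmonic, even on a Riemann surface (a $(1,1)$-form $\alpha\wedge\bar\beta$ built from harmonic $(1,0)$ and $(0,1)$ pieces need not be coclosed). Formality in the Goldman--Millson/Simpson argument is not established by exhibiting $\mathcal H^\bullet$ as a sub-DGLA. Instead it is proved via the $\partial\bar\partial$-lemma (``principle of two types''): one passes through an intermediate sub-DGLA (for instance the $\partial$-closed forms, or the kernel of $d^c$), and shows that both the inclusion of this sub-DGLA into $L^\bullet$ and its projection onto cohomology (with zero differential) are DGLA quasi-isomorphisms. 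Formality is then a zig-zag of quasi-isomorphisms, not an inclusion of harmonics. Once this is corrected, the rest of your argument --- reduction of the Maurer--Cartan equation to $[z\cup z]=0$ in the formal model, the Kuranishi-type analytic comparison, and adding back the coboundary directions $B^1$ to pass from $H^1$ to $Z^1(\Gamma,\mathfrak g)$ --- goes through and gives the stated quadratic cone.
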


\section{Deformation space}
\label{Section:DefSpace}

In this section we prove Theorem~\ref{Theorem:orientable}. It requires a slice theorem, in Subsection~\ref{Section:slice}, and some discussion
on the decomposition of the Lie algebra, Subsection~\ref{Section:Decomposition}. 
Theorem~\ref{Theorem:orientable}
is proved in Subsection~\ref{Section:ProofMT}.
In Subsection~\ref{subsection:nonorientable} we discuss the non-orientable case.

\subsection{An   analytic slice at semisimple representations}
\label{Section:slice}

There are several results in the literature dealing with slices of algebraic actions of reductive groups. 
For completeness, we have decided to include a slice theorem, that we did not find in the literature in this precise form.

We shall use some notation of real analytic geometry: analytic varieties are viewed
as subsets of some Euclidean space $\mathbb R^n$. Furthermore,  a \emph{germ at a point} means
an equivalence class of open sets centered at a point.
For convenience,  we use the term \emph{germ} loosely to denote an open set  in the equivalence class.

Let $\Gamma$ be a finitely generated group, 
$G=\mathrm{SL}_{n+1}(\mathbb R)$ with Lie algebra
$\mathfrak g= \mathfrak{sl}_{n+1}(\mathbb R)$, and 
let $\rho\colon \Gamma\to G$ be a semi-simple representation.
Let $G_\rho=\mathrm{Stab}_G(\rho)$ denote the stabilizer
of $\rho$ in $G$ by the action by conjugation.
For an analytic variety  ${\mathcal S}$ where
${G_\rho}$ acts,
we shall use the notation
$$G\times_{G_\rho}{\mathcal S}= (G\times{\mathcal S})/{G_\rho}, $$
where $g\in G_\rho$ maps  $(h,s)\in G\times \mathcal S$ to $(h g^{-1}, g s)$.

\begin{Theorem}
\label{Theorem:Slice}
In the previous setting,
there exists a  $G_\rho$-invariant real analytic
subvariety $\mathcal S\subset \hom (\Gamma, G)$ containing $\rho$
such that:
\begin{enumerate}[(i)]
 \item
  $T^{\mathrm{Zar}}_\rho \mathcal S\oplus B^1(\Gamma,
  \mathfrak{g}) =
  T^{\mathrm{Zar}}_\rho \hom(\Gamma, G)$. In particular
 $T^{\mathrm{Zar}}_\rho\mathcal S\cong H^1(\Gamma,\mathfrak{g})$.
 
\item If $\hom(\Gamma, G)$ is smooth at $\rho$, then so is $\mathcal S$.
\item There is a $G$-equivariant map
 $G\times_{G_\rho}{\mathcal S}\to \hom(\Gamma, G)$ that is a bi-analytic equivalence between
 $G\times_{G_\rho}{\mathcal S}$
 and a $G$-saturated neighborhood of $\rho$.
\end{enumerate}
Furthermore, the germ of $\mathcal S$ at  $\rho$ is  $G_\rho$-equivariantly equivalent to
a germ of a subvariety of  $H^1(\Gamma, G)$ at the origin, that is unique up to
 $G_\rho$-invariant bi-analytic equivalence.
\end{Theorem}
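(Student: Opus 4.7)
The plan is to emulate Luna's slice theorem in the real analytic category. Semi-simplicity of $\rho$ implies that the stabilizer $G_\rho$ is a reductive real algebraic subgroup of $G$: decomposing $\mathbb R^{n+1}$ into $\rho$-isotypical components and applying Schur's lemma over $\mathbb R$ realizes $G_\rho$ as (an intersection with $\mathrm{SL}_{n+1}$ of) a product of real, complex, or quaternionic general linear groups. This reductivity is the essential ingredient, as it provides complete reducibility of $G_\rho$-modules and the Kempf--Ness machinery for real reductive actions.

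First I would set up the tangent picture. By Weil's construction $T^{\mathrm{Zar}}_\rho\hom(\Gamma,G)\cong Z^1(\Gamma,\mathfrak g)$, with the tangent space to the orbit corresponding to $B^1(\Gamma,\mathfrak g)$. Both are $G_\rho$-invariant, so complete reducibility yields a $G_\rho$-invariant complement $V\subset Z^1(\Gamma,\mathfrak g)$ with $Z^1=B^1\oplus V$, and projection identifies $V$ with $H^1(\Gamma,\mathfrak g)$ as $G_\rho$-modules.

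Next I would construct $\mathcal S$ geometrically. Fix a finite generating set of $\Gamma$, so that $\hom(\Gamma,G)\subset G^N$ with $G$ acting diagonally by conjugation. Using the techniques of Richardson--Slodowy \cite{RichardsonSlodowy} and B\"ohm--Lafuente \cite{BohmLafuente} for real reductive actions, produce a $G_\rho$-invariant real analytic submanifold $\Sigma\subset G^N$ through $\rho$, transverse to the orbit $G\cdot\rho$, whose tangent space intersects $Z^1(\Gamma,\mathfrak g)$ exactly in $V$. Define $\mathcal S=\Sigma\cap\hom(\Gamma,G)$. Then (i) holds by construction; (ii) follows because $\mathcal S$ is cut out of $\hom(\Gamma,G)$ transversally to the orbit directions; and (iii) follows from a real analytic inverse function theorem argument: the differential of the multiplication map $\mu\colon G\times_{G_\rho}\mathcal S\to\hom(\Gamma,G)$, $[g,s]\mapsto g\cdot s$, at $[e,\rho]$ is the isomorphism $B^1\oplus V\to Z^1$, and closedness of $G\cdot\rho$ (which holds by semi-simplicity) guarantees that a suitable shrinking of the image is $G$-saturated.

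For the concluding germ statement, projection of $\mathcal S$ onto $V\cong H^1(\Gamma,\mathfrak g)$ along $B^1$ yields a $G_\rho$-equivariant bi-analytic equivalence of germs with a subvariety of $H^1(\Gamma,\mathfrak g)$; uniqueness up to $G_\rho$-invariant bi-analytic equivalence is then obtained by applying (iii) to two different choices of slice and composing the resulting equivariant maps. The principal obstacle I expect is the construction of the transverse slice $\Sigma$ when $G_\rho$ is a non-compact reductive group: naive averaging over a maximal compact does not suffice, and one truly needs the polar decomposition together with the real Kempf--Ness averaging of \cite{RichardsonSlodowy, BohmLafuente}, combined with a $G_\rho$-equivariant analytic exponential chart in $G^N$, to linearize the $G_\rho$-action transversally to the orbit.
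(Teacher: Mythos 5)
Your outline matches the paper's proof in its overall architecture (a $G_\rho$-invariant transversal slice in the free-group representation variety, intersection with $\hom(\Gamma,G)$, projection onto a complement isomorphic to $H^1$, and uniqueness via comparing two presentations of $\Gamma$), but you take a substantially heavier route to the central construction. You identify the ``principal obstacle'' as producing a $G_\rho$-equivariant transversal slice $\Sigma\subset G^N$ when $G_\rho$ is a non-compact reductive group, and you propose to address this with polar decomposition, real Kempf--Ness averaging, and an equivariant exponential chart. The paper sidesteps this entirely by an explicit and elementary linearization: translate $\hom(F_k,G)\cong G^k$ to the identity via $\rho'\mapsto(\rho'(\gamma_i)\rho(\gamma_i)^{-1})_i$, then apply the globally defined affine map $f(A)=A-\tfrac{\mathrm{trace}(A)}{n+1}\mathrm{Id}$ coordinatewise. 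The composite $\Phi$ is $G_\rho$-equivariant because $G_\rho$ acts by conjugation (which commutes with both steps), it sends $\rho$ to $0\in\mathfrak g^k$, and it is a local bianalytic equivalence with tangent map $Z^1(F_k,\mathfrak g)\xrightarrow{\sim}\mathfrak g^k$. The slice is then simply $\Phi^{-1}(H)$ for a $G_\rho$-invariant linear complement $H$ of $\Phi_*(B^1)$, obtained by complete reducibility alone, with no Kempf--Ness machinery and no need for a compact group average. Your approach would presumably also work, but what the paper's choice buys is (a) eliminating the non-compactness worry altogether, (b) making (ii) immediate (preimage of a linear subspace under a local bianalytic map), and (c) keeping the whole argument within elementary real analytic geometry so that the submersion theorem and the Gunning--Rossi embedding into a nonsingular germ are the only external ingredients.

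Two smaller points. First, in your argument for (iii) you invoke closedness of $G\cdot\rho$ to obtain $G$-saturatedness of the image; but the image of $G\times_{G_\rho}\mathcal S\to\hom(\Gamma,G)$ is automatically the $G$-saturated open set $G\cdot U$, and closedness of the orbit is not what is needed. What actually requires an argument is \emph{injectivity} of the map $\Psi\colon G\times_{G_\rho}\mathcal S\to\hom(\Gamma,G)$ near the base point, which your proposal does not address; the paper proves it by reducing, via equivariance and local bianalyticity, to the statement that two preimages of a point of $\mathcal S\cap U$ agree. Second, your concluding germ statement asserts that projection onto $V$ along $B^1$ gives a bianalytic equivalence of germs; when $\mathcal S_\Gamma$ is singular this is not automatic, and the paper supplies the missing step by first embedding $\mathcal S_\Gamma$ into a nonsingular analytic germ $M$ with the same Zariski tangent space (Gunning--Rossi) and then projecting $M$.
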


%

\begin{proof}
Assume first that $\Gamma=F_k=\langle \gamma_1,\dotsc,\gamma_k\mid\rangle$ is a free group of rank $k$. We consider
the algebraic isomorphism
$$
\begin{array}{rcl}
\phi\colon \hom(F_k,G) &  \to & G^k \\
 \rho' & \mapsto & (\rho'(\gamma_1)\rho(\gamma_1^{-1}),
\dotsc ,\rho'(\gamma_k)\rho(\gamma_k^{-1}))
\end{array}
$$
and the map
$$
\begin{array}{rcl}
f \colon G &  \to &
\mathfrak{g}
=\mathfrak{sl}_{n+1}(\mathbb R) \\
A & \mapsto & A-\frac{\mathrm{trace}(A)}{n+1}\mathrm{Id}
\end{array}
$$
We are interested in the composition
\begin{equation}
 \label{eqn:Phi}
 \Phi= ( f, \dotsc, f)\circ \phi\colon \hom(F_k,G)\to \mathfrak{g}^k.
\end{equation}
By construction $\Phi$ is $G_\rho$-equivariant, $\Phi(\rho)=0$,
and its tangent map at $\rho$
is the isomorphism
$$
\begin{array}{rcl}
\Phi_*\colon Z^1( F_k,\mathfrak{g}) & \to & \mathfrak g^k \\
 d & \mapsto & (d(\gamma_1),\ldots, d(\gamma_k)).
\end{array}
$$
We shall consider an open neighborhood $U\subset  \hom(F_k,G)$
of $\rho$
such that
$\Phi$ defines a bi-analytic map with
$\Phi(U)\subset \mathfrak{g}^k$, a neighborhood of the origin.

The stabilizer $G_\rho$ is easily determined, as $\rho$
is direct sum of simple representations, and one checks that the adjoint representation
of $G_\rho$ in  $\mathrm{End}(\mathfrak{g})$ is also  is semi-simple.
Hence
there exists
a $G_\rho$-invariant linear subspace $H\subset \mathfrak{g}^k$ such that
$\Phi_*(B^1 ( F_k,\mathfrak{g})) \oplus H= \mathfrak{g}^k$. In particular
$H\cong H^1 ( F_k,\mathfrak{g})$.

Define the slice as the union of $G_\rho$-translates:
$$
{\mathcal S}_{F_k}= G_\rho\cdot (U\cap \Phi^{-1}(H) )=\bigcup_{g\in G_\rho}
g\cdot  (U\cup \Phi^{-1}(H) )
$$
As $\Phi$ is $G_\rho$-equivariant  and $H$ is
$G_\rho$-invariant, we have the inclusion
$$
{\mathcal S}_{F_k}\cap U= \big( G_\rho \cdot (U\cap \Phi^{-1}(H) )\big)
\cap U
\subset \big( G_\rho \cdot \Phi^{-1}(H) \big)\cap U = \Phi^{-1}(H) \cap U.
$$
Hence
$$
{\mathcal S}_{F_k}\cap U
= \Phi^{-1}(H) \cap U.
$$
In particular  ${\mathcal S}_{F_k}$ is an analytic subvariety that satisfies (i) and (ii) for a free group.

To show that ${\mathcal S}_{F_k}$ satisfies (iii),
notice that the natural map
$
G\times {\mathcal S}_{F_k}\to \hom (F_k, G)
$
is a submersion (by the choice of $H$) and it factors to
$$
\Psi\colon G\times_{G_\rho}{\mathcal S}_{F_k}\to \hom (F_k, G).
$$
The group ${G_\rho}$ acts freely and properly on  $G\times{\mathcal S}_{F_k}$
and $G\times_{G_\rho}{{\mathcal S}_{F_k}}$ is an analytic manifold.
By the submersion theorem,
$\Psi$ defines a bi-analytic map between
$V\subset G\times_{G_\rho}{\mathcal S}_{F_k}$
a neighborhood of the class $G_\rho\times \{0\}$ and $U$,
the neighborhood of  $\rho$.
The image of $\Psi$ is the the union of $G$-iterates
$G \cdot  U=\bigcup_{g\in G} g\cdot  U$,
which is open and saturated, and $\Psi$ is locally bi-analytic by equivariance, in particular it is open.
It remains to check that   $\Psi$ is injective: let
$x, y\in G\times_{G_\rho}{\mathcal S}_{F_k}$ satisfy
$\Psi(x)=\Psi(y)$. By $G$-equivariance we may assume
$\Psi(x)=\Psi(y)\in U$.
From the construction of $\Psi$ as a
a submersion, $U$ is contained in the union of orbits of
$\mathcal{S}_{F_k}\cap U$ and acting by $G$ we may even assume that
$\Psi(x)=\Psi(y)\in {\mathcal S}_{F_k}\cap U$.
As $\Psi(x)\in {\mathcal S}_{F_k}\cap U$ and $\Psi$ is
locally by-analytic,
$x$ is the class of $e\times \Psi(x)\in G \times{\mathcal S}_{F_k}   $
modulo $G_\rho$. Since $\Psi(x)=\Psi(y)$,   $x=y$.
This proves (iii) for a free group.

For a group with a finite presentation
$\Gamma=\langle \gamma_1,\ldots,\gamma_k
\mid (r_j)_{j\in J}\rangle$, let $F_k$ denote the group freely generated by $
\gamma_1,\ldots,\gamma_k$. The projection $F_k\to\Gamma$
induces an inclusion of varieties of representations
 $$
 \hom (\Gamma,G)\hookrightarrow \hom(F_k,G),
 $$
 which in its turn induces an inclusion of Zariski tangent spaces
 $$
Z^1(\Gamma,\mathfrak{g})\subset
Z^1(F_k,\mathfrak{g}).
 $$
 Let $\mathcal S_{F_k}  \subset \hom(F_k,G)$ denote
 the slice for a free group constructed above.
 We consider the intersection
 $$
 \mathcal S_\Gamma= \mathcal S_{F_k}\cap  \hom (\Gamma,G).
 $$
It satisfies  properties (i), (ii) and  (iii)  of a slice,
in particular
$$
T^{\mathrm{Zar}}_{\rho}\mathcal S_\Gamma =
T^{\mathrm{Zar}}_{\rho}\mathcal S_{F_k}\cap
Z^1(\Gamma,\mathfrak{g})\cong H^1(\Gamma,\mathfrak{g}),
$$
because
$B^1(\Gamma,\mathfrak{g})=B^1(F_n,\mathfrak{g})$.

We next construct a $G_\rho$ equivariant bi-analytic equivalence between
$\mathcal S_\Gamma $ and a germ $\mathcal S$ in
$T^{\mathrm{Zar}}_{\rho}\mathcal S_\Gamma$.
At the beginning of the proof for $F_k$, we chose
$H\subset \mathfrak{g}^k$
a $G_\rho$-invariant complement to the space of coboundaries;
again by semi-simplicity of $G_\rho$, we find $G_\rho$-invariant complement $L$:
\begin{equation}
 \label{eqn:complementLL}
H= T^{\mathrm{Zar}}_{\rho}\mathcal S_\Gamma\oplus L
\end{equation}
Then the projection in~\eqref{eqn:complementLL}  of $S_\Gamma  $ to
$T^{\mathrm{Zar}}_{\rho}\mathcal S_\Gamma\cong
H\cap Z^1(\Gamma, \mathfrak{g})$ is the required subset $\mathcal S$.
By construction, this map is $G_\rho$-equivariant, we claim that it
is a bi-analytic equivalence
between $\mathcal  S_\Gamma$ and $\mathcal S$. When $\mathcal
S_\Gamma$ is non-singular the claim is clear from standard arguments,
because it is a projection to the tangent space of a non-singular
subvariety. When $\mathcal  S_\Gamma$ is singular, we embed $\mathcal
S_\Gamma$ in an analytic subvariety $M$ that it is non-singular and
has the same Zariski tangent space at $\rho$. Namely,
we use the argument in \cite[Theorem~V.A.14]{GunningRossi}
to find a real analytic
germ $\mathcal  S_\Gamma\subset M\subset \mathcal S_{F_n}$
with $T_\rho \mathcal S_{\Gamma}=T_\rho M$. We notice that
the projection defines a bi-analytic equivalent between $M$ and
the germ of  $T_\rho\mathcal  S_\Gamma=T_\rho M$ at $\rho$.

Next we prove uniqueness. Two presentations of $\Gamma$ yield two surjections $p_1\colon F_{k_1}\to\Gamma$ and 
$p_2\colon F_{k_2}\to\Gamma$. Let $f\colon  F_{k_1}\twoheadrightarrow F_{k_2}$ and $g\colon  F_{k_2}\twoheadrightarrow F_{k_1}$ 
be lifts of the identity of $\Gamma$:
\begin{center}
\begin{tikzcd}[column sep=small]  F_{k_1}\arrow[two heads]{dr}[left]{p_1}\arrow[rr, "\underset{\phantom{.}}g"]  &&   F_{k_2}\arrow[ll, shift right, 
"\overset{\phantom{.}}{f}"]\arrow[two heads, ld, "p_2"]  & \\
&  \Gamma  &      \end{tikzcd} 
\end{center}
Following the previous construction, this yields two analytic germs of subvarieties 
$$
{\mathcal S}_i\subseteq M_i\subset V_i\subset \mathfrak g^{k_i},\qquad\textrm{ for }i=1,2,
$$
such that $T_{0}^{\mathrm{Zar}}{\mathcal S}_i=T_{0}^{\mathrm{Zar}} M_i\cong H^1(\Gamma,\mathfrak g)$, with $M_i$ non-singular and $V_i$
an open neighborhood of $0$.
The induced maps $f^* \colon V_2\to V_1$ and $g^* \colon V_1\to V_2$  are  analytic and satisfy
$$ f^*\circ g^*\vert_ {S_1}=\mathrm{Id}_{{\mathcal S}_1}
\qquad  g^*\circ f^*\vert_ {{\mathcal S}_2}=\mathrm{Id}_{S_2}. $$
Therefore the tangent morphisms $d f^*$ and $d g^*$ satisfy 
$$ df^*\circ dg^*\vert_ {T^{\mathrm{Zar}}_0 {\mathcal S}_1}=
\mathrm{Id}_{T^{\mathrm{Zar}}_0 {\mathcal S}_1} \qquad
dg^*\circ df^*\vert_ {T^{\mathrm{Zar}}_0{\mathcal S}_2}=
\mathrm{Id}_{T^{\mathrm{Zar}}_0 {\mathcal S}_2}. $$
In particular $f^* (M_1)$ is a non-singular analytic germ in $\mathfrak g^{k_2}$
and we may chose $M_2=f^* (M_1)$. 
Thus the pairs of germs at the origin
$({\mathcal S}_1,M_1)$ and $({\mathcal S}_2,M_2)$  are equivalent, and uniqueness follows.
\end{proof}

\begin{Corollary}
\label{Corollary:quotient}
Under the hypothesis of the theorem,  a neighborhood of $[\rho]$ in $X(\Gamma, G)$ is homeomorphic
 to $S/\!/{G_\rho}$.
\end{Corollary}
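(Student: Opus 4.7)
The plan is to exploit the bi-analytic $G$-equivariant equivalence $G\times_{G_\rho}\mathcal{S}\cong U$ provided by Theorem~\ref{Theorem:Slice}(iii), where $U=G\cdot\mathcal{S}$ is a $G$-saturated open neighborhood of the orbit $G\cdot\rho$. Since $U$ is $G$-saturated, its image $V=\pi(U)$ in the quotient $X(\Gamma,G)$ is an open neighborhood of $[\rho]$, and the task reduces to constructing a homeomorphism $\mathcal{S}/\!/G_\rho\cong V$.

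First I would define the candidate map $\Phi\colon \mathcal{S}/\!/G_\rho\to X(\Gamma,G)$: the composition $\mathcal{S}\hookrightarrow\hom(\Gamma,G)\twoheadrightarrow X(\Gamma,G)$ is continuous and $G_\rho$-invariant, and since $X(\Gamma,G)$ is Hausdorff, the universal property of the Hausdorff quotient stated after Theorem~\ref{Thm:semialgebraic} yields a continuous $\Phi$ with image in $V$. Bijectivity rests on two consequences of the product decomposition $U\cong G\times_{G_\rho}\mathcal{S}$: for $s\in\mathcal{S}$ one has $G\cdot s\cap\mathcal{S}=G_\rho\cdot s$, and the intersection of a closed $G$-orbit in $U$ with $\mathcal{S}$ is closed in $\mathcal{S}$. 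Hence every closed $G$-orbit in $U$ contains exactly one closed $G_\rho$-orbit in $\mathcal{S}$, giving surjectivity; conversely, if $s_1,s_2\in\mathcal{S}$ have closures $\overline{G\cdot s_i}$ sharing a common closed $G$-orbit $G\cdot y$ in $U$, then by saturation $G\cdot y$ meets $\mathcal{S}$ in a closed $G_\rho$-orbit lying in both $\overline{G_\rho\cdot s_i}^{\,\mathcal{S}}$, forcing $[s_1]=[s_2]$ in $\mathcal{S}/\!/G_\rho$.

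The main obstacle will be upgrading $\Phi$ from a continuous bijection to a homeomorphism, i.e.\ matching the Hausdorff-quotient topologies. The cleanest route is to pass first to ordinary topological quotients: the $G$-equivariant homeomorphism $U\cong G\times_{G_\rho}\mathcal{S}$ descends to
\[
U/G\;\cong\;(G\times_{G_\rho}\mathcal{S})/G\;\cong\;\mathcal{S}/G_\rho,
\]
and applying the Hausdorffification functor then gives $U/\!/G\cong\mathcal{S}/\!/G_\rho$. It remains to identify $U/\!/G$ with $V\subset X(\Gamma,G)$, for which one uses that a $G$-orbit lying in $U$ is closed in $U$ if and only if it is closed in $\hom(\Gamma,G)$ (by saturation, an orbit closure stays inside $U$), so the closed-orbit spaces coincide; and that the restriction of a quotient map to a saturated open set is again a quotient map, so the subspace topology on $V$ inherited from $X(\Gamma,G)$ agrees with the quotient topology from $U$. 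Together these give the desired homeomorphism, and hence the corollary.
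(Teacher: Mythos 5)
Your overall strategy is the right one, and most of the steps are sound: passing to $G\times_{G_\rho}\mathcal{S}\cong U$, descending to a homeomorphism $U/G\cong\mathcal{S}/G_\rho$ of topological quotients, applying the Hausdorffification functor, and using the product structure to see that $G\cdot s\cap\mathcal{S}=G_\rho\cdot s$ and that closed $G$-orbits in $U$ meet $\mathcal{S}$ in closed $G_\rho$-orbits. This is essentially how such a corollary should be proved, and the paper indeed gives no proof, treating it as routine.

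There is, however, a genuine gap at the step you flag with the parenthetical ``(by saturation, an orbit closure stays inside $U$)'' and again when you invoke ``the restriction of a quotient map to a saturated open set is a quotient map.'' Both of these require the \emph{strong} saturation property $U=\pi^{-1}(\pi(U))$, where $\pi\colon\hom(\Gamma,G)\to X(\Gamma,G)$ is the quotient map; that is, if $x\in U$ then the unique closed orbit in $\overline{G\cdot x}$ must also lie in $U$. But Theorem~\ref{Theorem:Slice}(iii) only asserts that the neighborhood is \emph{$G$-saturated}, meaning $G$-invariant, and $G$-invariance is strictly weaker than $\pi$-saturation: a $G$-invariant open set can contain a non-closed orbit whose limit orbit escapes. (Think of $\mathbb{C}^\ast$ acting on $\mathbb{C}^2$ by $t\cdot(x,y)=(tx,t^{-1}y)$ and $U=\mathbb{C}^2\setminus\{0\}$, which is $G$-invariant but not $\pi$-saturated.) This gap affects both your injectivity argument --- without it, the common closed orbit $G\cdot y$ need not meet $\mathcal{S}$ --- and the topological identification of $U/\!/G$ with the open subset $V=\pi(U)$ of $X(\Gamma,G)$.

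To close the gap you would need one of two things: either argue, as in Luna's \'etale slice theorem, that the neighborhood $U$ may be shrunk to an excellent (i.e.\ $\pi$-saturated) one --- this uses that $\rho$ can be taken in the Kempf--Ness set $\mathcal{M}$ of Theorem~\ref{Thm:semialgebraic} and that the closed orbit degenerating from $G\cdot x$ for $x$ near $\rho$ is represented by a minimum vector near $\rho$, hence stays in $U$ --- or else work directly with $\mathcal{M}$ and exhibit a homeomorphism of $\mathcal{S}/\!/G_\rho$ with a neighborhood in $\mathcal{M}/K$. As written, the proof invokes only the weaker saturation supplied by Theorem~\ref{Theorem:Slice}(iii), which is insufficient; everything else is fine.
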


When $G_\rho$ is trivial, eg when $\rho$ is $\mathbb{C}$-irreducible, this provides the natural analytic structure in a neighborhood of the character of $\rho$.

\subsection{Decomposing the Lie algebra}
\label{Section:Decomposition}

Before proving Theorem~\ref{Theorem:orientable},
we discuss some results
related to the decomposition of the Lie algebra.

Let $\mathcal O^2$ be a compact and orientable 2-orbifold,
with fundamental group
$\Gamma=\pi_1(\mathcal O^2)$. Set
$ G_0=\mathrm{SL}_{n}(\mathbb R)$,
$G=\mathrm{SL}_{n+1}(\mathbb R)$, and view $G_0$ as a subgroup of $G$:
$$
\begin{array}{rcl}
 G_0 & \hookrightarrow & G\\
 A & \mapsto &\begin{pmatrix}
               A & \\ & 1
              \end{pmatrix}
\end{array}
$$
Let $\mathfrak g$ and $\mathfrak g_0$ denote the corresponding Lie algebras. We have a direct sum of
$G_0$-modules
\begin{equation}
\label{eqn:directsum}
\mathfrak g= \mathfrak g_0\oplus \mathbf m_{\mathsf c}\oplus
\mathbf m_{\mathsf r}\oplus \mathbf d
\end{equation}
where  $\mathbf m_{\mathsf c}$ is the subspace of $\mathfrak g$
with vanishing entries away from the last column,
$\mathbf m_{\mathsf r}$ away from the last row, and
 $\mathbf d\cong \mathbb R$ is the subspace of diagonal matrices
 that commute with every element in $\mathfrak g_0$.
As $G_0$-modules,
  $\mathbf m_{\mathsf c}\cong \mathbb R^n_{G_0}$ (eg  $\mathbb R^n$ as  module
by the standard action of $G_ 0$),
$\mathbf m_{\mathsf r}= (\mathbf m_{\mathsf c})^*$  (the  contragredient of $\mathbf m_{\mathsf c}$,
which is also its dual),
and $\mathbf d\cong \mathbb R$ is the trivial module.

In terms of elementary matrices, if $e_{i,j}$ denotes  the elementary matrix with $1$ in the $i$-th row and $j$-th column, and 0 in the other entries, then as vector spaces we have:
\[
\begin{aligned}
 \mathfrak g_{0}=& \langle e_{i,j}\mid 1\leq i,j\leq n
 \rangle\cap\mathfrak g \\
 \mathbf m_{\mathsf r} = & \langle e_{{n+1}, j}\mid 1\leq j\leq n
  \rangle
  \end{aligned}
 \qquad
\begin{aligned}
 \mathbf m_{\mathsf c} = & \langle e_{i,{n+1}}\mid 1\leq i\leq n
 \rangle
\\
   \mathbf d = & \langle e_{1,1}+\cdots +e_{n,n}-
 n \,e_{n+1,n+1}\rangle
\end{aligned}
\]
The $G_0$-modules $\mathfrak g_0$ and $\mathbf d$ are both self-dual: the Killing form on $\mathfrak g$ restricts
to a non-degenerate form in both $\mathfrak g_0$ and $\mathbf d$  (a multiple of the Killing form of
$\mathfrak g_0$, and a multiple of the isomorphism
$\mathbf d\otimes \mathbf d\cong \mathbb R\otimes \mathbb R\cong \mathbb R$).

The modules $\mathbf m_{\mathsf c}$ and $\mathbf m_{\mathsf r}$ are dual
from each other. The Killing form $B$
on $\mathfrak g$ restricts to a perfect pairing
$B\colon \mathbf m_{\mathsf c}\times \mathbf m_{\mathsf r}\to\mathbb R$ which is $G_0$-invariant. In terms of representations, the adjoint action on
$\mathfrak g$ induces the $\Gamma$-module structures:
$$
\mathbf m_{\mathsf c}\cong \mathbb R^n_\rho \qquad \textrm{ and }
\qquad
\mathbf m_{\mathsf r}\cong \mathbb R^n_{\rho^*}
$$
where $\rho^*$ denotes the contragredient representation,
defined by $\rho^*(\gamma)=(\rho(\gamma)^{-1})^t$,
$\forall\gamma\in\Gamma$. The restriction of the Killing form is also $\Gamma$-invariant perfect pairing
$B\colon \mathbf m_{\mathsf r}\times \mathbf m_{\mathsf c}\to\mathbb R$,
that can be seen as a multiple of  canonical pairing
$\mathbb R^n_\rho \times \mathbb R^n_{\rho^*} \to\mathbb R$,
after the identifications $\mathbf m_{\mathsf c}\cong \mathbb{R}_{\rho}$ and
 $\mathbf m_{\mathsf r}\cong \mathbb{R}_{\rho^*}$.
 It induces  two pairings in cohomology
\begin{align*}
 &H^1(\Gamma, \mathbf m_{\mathsf r})\times
H^1(\Gamma, \mathbf m_{\mathsf c})\to H^2(\Gamma, \mathbb R),
\\
 &H^1(\Gamma, \mathbf m_{\mathsf c})\times
H^1(\Gamma, \mathbf m_{\mathsf r})\to H^2(\Gamma, \mathbb R)
\end{align*}
respectively
defined at the level of 1-cocycles by
\begin{align*}
B(z_{\mathsf r}\cup z_{\mathsf c}) (\gamma_1,\gamma_2)=
z_{\mathsf r}(\gamma_1)^t \rho(\gamma_1)z_{\mathsf c}(\gamma_2),
\\
B(z_{\mathsf c}\cup z_{\mathsf r}) (\gamma_1,\gamma_2)=
z_{\mathsf c}(\gamma_1)^t \rho^*(\gamma_1)z_{\mathsf r}(\gamma_2),
\end{align*}
following \eqref{eqn:cup}. We remark two further properties.

\begin{Remark}
\label{Remark:PDSkS}
\begin{enumerate}[(a)]
 \item (Non-degeneracy.) When $\Gamma$ is the fundamental group of a closed,
orientable, and aspherical 2-orbifold $\mathcal O^2$,
these pairings in cohomology are non-degenerate by Poincar\'e
duality, stated in Theorem~\ref{TheoremPD}.

 \item (Skew-symmetry.) For $\theta_{\mathsf r}\in H^1(\Gamma, \mathbf m_{\mathsf r})$
and $\theta_{\mathsf c}\in H^1(\Gamma, \mathbf m_{\mathsf c})$,
$
B(\theta_{\mathsf c}\cup \theta_{\mathsf r} )=
-B( \theta_{\mathsf r}\cup \theta_{\mathsf c} )
$, by Lemma~\ref{lemma:symmetry}.
\end{enumerate}
\end{Remark}


In the formulation of the next lemma we require
\begin{equation}
\label{eqn:proj}
\pi_{\mathsf r}:\mathfrak{g}\to\mathbf m_{\mathsf r}
\quad\textrm{ and } \quad
\pi_{\mathsf c}:\mathfrak{g}\to\mathbf m_{\mathsf c}
\end{equation}
the projections from the direct sum \eqref{eqn:directsum}.

\begin{Lemma}
 \label{Lemma:cupprodcut}
 Let $\mathcal O^2$ be a compact orientable and hyperbolic 2-orbifold, and $\Gamma=\pi_1(\mathcal O^2)$.
 For a $\mathbb C$-irreducible representation
$\rho\colon \Gamma\to G_0$:
\begin{enumerate}[(i)]
 \item   If $\partial  \mathcal O^2\neq \emptyset$, then $H^2(\Gamma, \mathfrak g)=0$.

 \item  If $\partial  \mathcal O^2= \emptyset$, then
 $H^2(\Gamma, \mathfrak g)= H^2(\Gamma, \mathbf d)\cong H^2(\Gamma, \mathbb R)=\mathbb R$.

 In addition, for $\theta\in H^1(\Gamma, \mathfrak g)$,
 $$
 [\theta\cup\theta]=
 c_n \, B( \pi_{\mathsf r}^*(\theta)\cup \pi_{\mathsf c}^* (\theta) )
 = - c_n \, B( \pi_{\mathsf c}^*(\theta)\cup \pi_{\mathsf r}^* (\theta) )
 $$
 for some constant $c_n\neq 0$ depending only on the dimension $n$.
\end{enumerate}
\end{Lemma}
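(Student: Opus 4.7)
The plan is to exploit the $G_0$-module decomposition~\eqref{eqn:directsum}, which by composition with $\rho$ is also a splitting of $\Gamma$-modules, to break
\[
H^2(\Gamma,\mathfrak g)=H^2(\Gamma,\mathfrak g_0)\oplus H^2(\Gamma,\mathbf m_{\mathsf c})\oplus H^2(\Gamma,\mathbf m_{\mathsf r})\oplus H^2(\Gamma,\mathbf d)
\]
into pieces that can be treated separately, and then to compute the $\mathbf d$-component of $[\theta\cup\theta]$.

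For part~(i), when $\partial\mathcal O^2\neq\emptyset$ the orbifold is homotopy equivalent to a one-dimensional sub-orbifold (a spine); since the isotropy groups are finite and hence have trivial $\mathbb R$-cohomology in positive degree, one gets $H^i(\Gamma,V)=0$ for every $i\geq 2$ and every $\Gamma$-module $V$. In particular all four summands vanish.

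For part~(ii), with $\mathcal O^2$ closed, the plan is to use the $\Gamma$-invariant non-degenerate pairings that the Killing form provides on each summand, namely $\mathfrak g_0\times\mathfrak g_0\to\mathbb R$, $\mathbf m_{\mathsf c}\times\mathbf m_{\mathsf r}\to\mathbb R$ and $\mathbf d\times\mathbf d\to\mathbb R$, and to invoke Poincar\'e duality (Theorem~\ref{TheoremPD}) to identify each $H^2$ with the dual of an $H^0$. The invariants $H^0(\Gamma,\mathfrak g_0)$, $H^0(\Gamma,\mathbf m_{\mathsf c})$ and $H^0(\Gamma,\mathbf m_{\mathsf r})$ all vanish: the first by Lemma~\ref{Lemma:H0trivial}, and the other two because any invariant vector would span a $\mathbb C$-invariant line, contradicting the $\mathbb C$-irreducibility of $\rho$ (and hence of $\rho^*$). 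The trivial module $\mathbf d$ contributes $H^2(\Gamma,\mathbf d)\cong H^2(\Gamma,\mathbb R)\cong\mathbb R$ by orientability, which proves the first assertion of~(ii).

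For the cup-product formula I would decompose $\theta=\theta_0+\theta_{\mathsf c}+\theta_{\mathsf r}+\theta_{\mathsf d}$ and expand $[\theta\cup\theta]$ bilinearly. Consulting the bracket table---$[\mathfrak g_0,\mathfrak g_0]\subset\mathfrak g_0$, $[\mathbf d,\mathfrak g_0]=0$, $[\mathbf m_{\mathsf c},\mathbf m_{\mathsf c}]=[\mathbf m_{\mathsf r},\mathbf m_{\mathsf r}]=0$, and only $[\mathbf m_{\mathsf c},\mathbf m_{\mathsf r}]\subset\mathfrak g_0\oplus\mathbf d$ reaches $\mathbf d$---only the mixed terms contribute, giving
\[
\pi_{\mathbf d}\bigl([\theta\cup\theta]\bigr)=\pi_{\mathbf d}\bigl([\theta_{\mathsf c}\cup\theta_{\mathsf r}]\bigr)+\pi_{\mathbf d}\bigl([\theta_{\mathsf r}\cup\theta_{\mathsf c}]\bigr).
\]
A direct calculation in the elementary matrix basis $\{e_{i,n+1}\}$, $\{e_{n+1,j}\}$ shows that for $u\in\mathbf m_{\mathsf c}$ and $v\in\mathbf m_{\mathsf r}$, $\pi_{\mathbf d}([u,v])$ is a fixed non-zero scalar multiple of $B(u,v)$ (using the normalization $\pi_{\mathbf d}(A)=\frac{\mathrm{tr}(dA)}{n(n+1)}\,d$, with $d=e_{1,1}+\cdots+e_{n,n}-n\,e_{n+1,n+1}$). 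Hence $\pi_{\mathbf d}[\theta_{\mathsf c}\cup\theta_{\mathsf r}]$ is a non-zero multiple of $B(\theta_{\mathsf c}\cup\theta_{\mathsf r})$, and the two forms of the identity are then exchanged by the skew-symmetry in Remark~\ref{Remark:PDSkS}(b). The main obstacle is the bookkeeping of $\pi_{\mathbf d}$ on the mixed brackets and tracking the constant $c_n$; its non-vanishing is immediate once the computation is done, since $B$ is a perfect pairing.
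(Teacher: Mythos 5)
Your proposal is correct and follows essentially the same route as the paper: both use the $\Gamma$-module decomposition $\mathfrak g=\mathfrak g_0\oplus\mathbf m_{\mathsf c}\oplus\mathbf m_{\mathsf r}\oplus\mathbf d$, handle (i) by the (virtual) graph homotopy type of a bounded $\mathcal O^2$, prove the vanishing of $H^2$ on the first three summands in the closed case via Poincar\'e duality together with $\mathbb C$-irreducibility killing $H^0$, and reduce the cup product to the $\mathbf m_{\mathsf c}\times\mathbf m_{\mathsf r}\to\mathbf d$ piece of the bracket, identified with a nonzero multiple of the Killing pairing. The only cosmetic difference is that you expand $[\theta\cup\theta]$ bilinearly term by term and then apply a pointwise bracket formula $\pi_{\mathbf d}([u,v])=c\,B(u,v)\,d$, whereas the paper directly evaluates the bracket of the two block-matrix cocycle representatives; these are the same computation in different packaging.
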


\begin{proof} (i)
When $\partial  \mathcal O^2\neq \emptyset$, then $\mathcal O^2$ has virtually the homotopy type of a graph and therefore
$H^2(\Gamma, \mathfrak g)\cong H^2(\mathcal O^2, \mathfrak g)=0$.

(ii)
When $\partial  \mathcal O^2= \emptyset$,
since $\rho$ is $\mathbb C$-irreducible in $G_0$ the invariant subspaces
$
(\mathfrak g_0)^{\mathrm{Ad}\rho(\Gamma)}
$,
$
(\mathbf m_{\mathsf c})^{\mathrm{Ad}\rho(\Gamma)}
$
and
$
(\mathbf m_{\mathsf v})^{\mathrm{Ad}\rho(\Gamma)}
$
are trivial. In addition, using Poincar\' e duality and the isomorphism between the cohomology of $\mathcal O^2$
and $\Gamma=\pi_1(\mathcal O^2)$, we deduce
$H^2(  \Gamma, \mathfrak g_0 )\cong   H^0( \Gamma, \mathfrak g_0)^*\cong 0$,
$ H^2(\Gamma, \mathbf m_{\mathsf r})\cong
H^0(\Gamma, \mathbf m_{\mathsf r})^*\cong 0$,
and
$ H^2(\Gamma, \mathbf m_{\mathsf c})\cong
H^0(\Gamma, \mathbf m_{\mathsf c})^*\cong 0$.
Then the first assertion in (ii) follows from
the direct sum~\eqref{eqn:directsum}.

From the vanishing of $H^2( \Gamma, \mathfrak g_0 )$,
$ H^2(\Gamma, \mathbf m_{\mathsf r})$
and  $H^2(\Gamma, \mathbf m_{\mathsf c})$, the only relevant term in $ [\theta\cup\theta]$ comes from
the product
$$
[\cdot,\cdot ]\colon (\mathbf m_{\mathsf r}\oplus\mathbf m_{\mathsf c})
\times (\mathbf m_{\mathsf r}\oplus\mathbf m_{\mathsf c})\to \mathbf d.
$$
This motivates the following computation.
For $\gamma_1,\gamma_2\in\Gamma$,
$z_{\mathsf r}\in Z^1(\Gamma, \mathbf m_{\mathsf r})$,
and $z_{\mathsf c}\in Z^1(\Gamma, \mathbf m_{\mathsf c})$,
\begin{multline*}
\left[
\left(
\begin{matrix}
 \text{\Large 0} & z_{\mathsf c}(\gamma_1) \\
 z_{\mathsf r}^t(\gamma_1) & 0
\end{matrix}
\right)
,
\left(
\begin{matrix}
 \text{\Large 0}  & \rho(\gamma_1)z_{\mathsf c}(\gamma_2) \\
 z_{\mathsf r}^t(\gamma_2)\rho(\gamma_1^{-1}) & 0
\end{matrix}
\right)
\right]=
\\
\left(
\begin{matrix}
 \text{\Large *} & 0 \\
 0 & z_{\mathsf r}^t(\gamma_1)\rho(\gamma_1)z_{\mathsf c}(\gamma_2)-
 z_{\mathsf r}^t(\gamma_2)\rho(\gamma_1^{-1})z_{\mathsf c}(\gamma_1)
\end{matrix}
\right)
=
\\
\left(
\begin{matrix}
 \text{\Large *} & 0 \\
 0 & z_{\mathsf r}^t(\gamma_1)\big(\rho(\gamma_1)z_{\mathsf c}(\gamma_2)\big)-
 z_{\mathsf c}^t(\gamma_1)\big(\rho^*(\gamma_1)z_{\mathsf r}(\gamma_2)\big)
\end{matrix}
\right).
\end{multline*} The assertion follows from this formula,
Remark~\ref{Remark:PDSkS}, and Lemma~\ref{lemma:symmetry}.
\end{proof}

\subsection{Prof of Theorem~\ref{Theorem:orientable} }
\label{Section:ProofMT}

The proof is  based on the slice of Section~\ref{Section:slice} and the results on the Lie algebra and cup products in Section~\ref{Section:Decomposition}.

\begin{proof}[Proof of Theorem~\ref{Theorem:orientable}]

We aim to apply Theorem~\ref{Theorem:Slice}, so we  describe
the stabilizer of $\rho$  in $G$:
$$ 
G_{\rho} =\{ \mathrm{diag}(t,\ldots, t, t^{-n})\mid
t\in\mathbb{R}-\{0\}.
   \}
$$
The identity component of  $G_{\rho}$ is
$\exp(\mathbf d)=
\{\mathrm{diag}(e^\lambda,\ldots, e^\lambda, e^{-n\lambda})\mid \lambda\in\mathbb R\}$. 
The group $G_{\rho}$ is generated by its identity component and the matrix
\begin{equation}
	\label{eqn:matrix}
    \begin{cases}
		\mathrm{diag}(-1,\dotsc, -1, -1) & \textrm{if } n+1 \textrm{ is even,} \\
		\\
		\mathrm{diag}(-1,\dotsc,-1,+ 1) & \textrm{if } n+1 \textrm{ is odd.}
	\end{cases}
\end{equation}
%
The stabilizer
 $G_\rho$ preserves the decomposition \eqref{eqn:directsum}
and acts trivially on  $\mathfrak g_0$ and $\mathbf d$.
A matrix $
\mathrm{diag}(t,\dotsc, t, t^{-n})
$ acts  by multiplication by
 $t^{n+1}$ on $\mathbf m_{\mathsf c}$, and by  $t^{-(n+1})$
on $\mathbf m_{\mathsf r}$.
In particular we must take into account the parity of $n+1$, as the matrix
in \eqref{eqn:matrix} acts nontrivially  on $\mathbf m_{\mathsf c}$ and 
$\mathbf m_{\mathsf r}$ iff $n+1$ is odd. 
%
%
%
It follows that  $G_{\rho}$
acts semi-simply on~$\mathfrak{g}$.

The induced action in cohomology of  $G_{\rho}$
preserves the decomposition
$$
H^1(\Gamma,\mathfrak{g}) = H^1(\Gamma,\mathfrak g_0)\oplus H^1(\Gamma,\mathbf m_{\mathsf c})\oplus H^1(\Gamma,
\mathbf m_{\mathsf r})\oplus H^1(\Gamma,\mathbf d)
$$
and the action on each cohomology group is the same as in the corresponding coefficients.
%
We set  $p=\dim H^1(\Gamma,\mathfrak g_0)$,
$d=\dim H^1(\Gamma, \mathbb R^n_\rho)
=\dim H^1(\Gamma, \mathbf m_{\mathsf c})$, and
$b=\dim H^1(\Gamma, \mathbf d)=\dim  H^1(\Gamma, \mathbb R)$.

Let $\mathcal S_\Gamma\subset\hom(\Gamma,G)$ be the slice of  Theorem~\ref{Theorem:Slice}, so
 a neighborhood of $[\rho]$ is homeomorphic to the germ
$
\mathcal S_\Gamma/\!/G_\rho.
$
%
%
%
%
To describe $\mathcal S_\Gamma$ we distinguish two cases.
Assume first that $\partial\mathcal O^2\neq\emptyset$, so
$\hom(\Gamma, G)$ is
 smooth at $\rho$,
because $H^2(\mathcal O^2,\mathfrak g)=0$
(Proposition~\ref{Proposition:Goldman}).
So by    Theorem~\ref{Theorem:Slice}
the germ of $\mathcal S_\Gamma$ at $\rho$ is $G_\rho$-equivariantly
by-analytic to the germ at the origin of $T^{\textrm{Zar}}_0\mathcal S_\Gamma =  H^1(\Gamma,\mathfrak{g})$.
%
%
%
Thus, when $n+1$ is even, a neighborhood of $\rho$ is homeomorphic to $U/\!/\mathbb R$
where
$$
U\subset \mathbb{R}^{p+b}\oplus  (\mathbb R^{d}\oplus \mathbb R^{d})
$$
is a neighborhood of the origin
and  the action of $t\in\mathbb R$ on $(x,y,z)\in U\subset  \mathbb R^{p+b}\oplus \mathbb R^{d}\oplus \mathbb R^{d}$ is given by
$(x,y, z)\mapsto (x, e^ty, e^{-t} z)$.
When $n+1$ is odd, we need to further quotient by the action of  the antipodal on $\mathbb R^{d}\oplus \mathbb R^{d}$.

Define
\begin{equation}
 \label{eqn:M}
\mathcal M= \{(x,y,z)\in\mathbb R^{p+b}\oplus \mathbb R^{d}\oplus \mathbb R^{d}\mid
|y|=|z|\}.
\end{equation}
Then:
$$
U/\!/\mathbb R\cong \mathcal M \cap U
$$
because $\mathcal M$ intersects precisely once each closed orbit (at the point that minimizes the distance to
$\mathbb{R}^{p+b}\times\{0,0\}$, eg an elementary example of Kempf-Ness). Therefore a neighborhood of the character is homeomorphic to
$$
\begin{cases}
 \mathcal M \cap U & \textrm{when } n+1 \textrm{ is even} \\
 \mathcal M \cap U/\!\sim & \textrm{when } n+1 \textrm{ is odd, } \\
\end{cases}
$$
where $\sim$ denotes the antipodal relation on the last $2 d$ coordinates.

\medskip

Next we deal with the case  $\partial\mathcal O^2=\emptyset$.
Here we use Theorem~\ref{Theorem:Quadratic},
the result of \cite{GoldmanMaryland,GoldmanMillson,Simpson} on quadratic singularities.
We describe a neighborhood using a power expansion.
Fix a norm in $\mathfrak g$, say
$|a|=\mathrm{trace}(a^t a)$, hence a norm in $\mathfrak g^k$ and in
$Z^1(\Gamma, \mathfrak g)$.
By Theorem~\ref{Theorem:Quadratic}, a neighborhood   $V\subset \hom(\Gamma, G)$ of $\rho$  can be described as
\begin{equation}
	\label{eqn:V}
	V\cong\{ \varepsilon z+ O(\varepsilon^2)\mid
	\varepsilon\in [0,\varepsilon_0), \
	z\in Z^1(\Gamma, \mathfrak g)\mid
	[z\cup z]\sim 0, |z| = 1\}.
\end{equation}
Now we follow the construction of the slice $\mathcal S$ in the
proof of Theorem~\ref{Theorem:Slice}, in particular we have a surjection of the
free group of rank $k$, $F_k\twoheadrightarrow \Gamma$, an
inclusion of varieties of representations
$\hom(\Gamma, G)\subset \hom(F_k, G)$,
and
a map $\Phi\colon \hom(F_k, G)\to \mathfrak{g}^k\cong
Z^1(F_k,\mathfrak{g})\cong T^{\mathrm{Zar}}_\rho\hom(F_k, G)$ in
\eqref{eqn:Phi} by-analytic in a neighborhood of $\rho$.
The proof of Theorem~\ref{Theorem:Slice}
considers $H\subset
Z^1(F_k,\mathfrak g)$ a $G_\rho$-invariant complement to
$B^1(F_k,\mathfrak g)$ and define the slice
in $\hom(F_k, G)$ as a neighborhood in $\Phi^{-1}(H)$.
The slice in
$\hom(\Gamma, G)$ is its trace in $\hom(\Gamma, G)$:
$S_\Gamma=\Phi^{-1}(H)\cap V$, where $V$ is as in~\eqref{eqn:V}.
Thus
$$
 \mathcal S_\Gamma \cong\{ \varepsilon z+ O(\varepsilon^2)\mid
\varepsilon\in [0,\varepsilon_0), \
z \in H\cap Z^1(\Gamma,\mathfrak g)
\mid
[z\cup z]\sim 0, |z | = 1\}.
$$
Since $H\cap Z^1(\Gamma,\mathfrak g)\cong H^1(\Gamma,\mathfrak g)$,
and $\mathcal S$ is a projection of $\mathcal S_\Gamma$ to
$H\cap Z^1(\Gamma,\mathfrak g)$:
$$
\mathcal S \cong \{ \varepsilon \theta+ O(\varepsilon^2)\mid
\varepsilon\in [0,\varepsilon_0), \
\theta \in H^1(\Gamma,\mathfrak g)
\mid
[\theta\cup \theta]= 0, |\theta | = 1\}.
$$
Therefore if $\mathcal M\subset H^1(\Gamma, \mathfrak{g})$
is a minimal set as in \eqref{eqn:M} and $(G_\rho)_0\cong\mathbb R^*$
denotes the identity component of $G_\rho$:
$$\mathcal S/\!/(G_\rho)_0 \cong
\mathcal S\cap\mathcal M=\{ \varepsilon \theta+ O(\varepsilon^2)\mid
\varepsilon\in [0,\varepsilon_0), \
\theta\in \mathcal M \mid
[\theta\cup \theta]\sim 0, | \theta | = 1\}.
$$
So $\mathcal S\cap\mathcal M$ is homeomorphic to the cone on
the subset of the unit sphere in $ H^1(\Gamma, \mathfrak g)$ defined by $\mathcal M$ and
the vanishing of the cup product.  
Notice that both the equation defining $\mathcal M$ 
and the cup product are trivial on the spaces
$H^1(\Gamma, \mathfrak{g}_0)$
and $H^1(\Gamma, \mathbf{d})$.
Hence, using the definition of $\mathcal M$ and Lemma~\ref{Lemma:cupprodcut}, the neighborhood in $X(\Gamma, G)$ is homeomorphic
to a neighborhood of the origin in
$$
H^1(\Gamma, \mathfrak{g}_0)\oplus H^1(\Gamma, \mathbf{d})\oplus
\begin{cases}
{\mathcal C} & \textrm{ when } n+1 \textrm{ is even} \\
{\mathcal C}/\!\sim\! & \textrm{ when } n+1 \textrm{ is odd},
\end{cases}
$$
where ${\mathcal C}$ is the cone
$$
{\mathcal C}=
 \{(\theta_{\mathsf c},\theta_{\mathsf r})\in
 H^1(\Gamma, \mathbf m_{\mathsf c})\oplus H^1(\Gamma, \mathbf m_{\mathsf r}) \mid |\theta_{\mathsf c}|^2= |\theta_{\mathsf r}|^2\textrm{ and }
 \theta_{\mathsf c}\cdot \theta_{\mathsf r}=0\}.
$$
The cup product
$$
H^1(\Gamma, \mathbf m_{\mathsf c})\times H^1(\Gamma, \mathbf m_{\mathsf r})\to
H^2(\Gamma, \mathbf d)\cong\mathbb R
$$
is a perfect pairing, by Theorem~\ref{TheoremPD}. Then using elementary linear algebra to combine
the norms with the scalar product, we have that
$$
{\mathcal C}\cong\{(x,y)\in\mathbb R^d\times \mathbb R^d\mid |x|^2=|y|^2,\, x\cdot y=0\}\cong \mathrm{Cone}(UT(S^{d-1})),
$$
and the theorem follows.
\end{proof}

\subsection{Non-orientable orbifolds}
 \label{subsection:nonorientable}
Next we discuss \emph{non orientable} 2-orbifolds.
Instead of working with $\mathrm{SL}_n(\mathbb R)$,
for non-orientable orbifolds it makes sense to work with representations of its  fundamental group in
$$
 \mathrm{SL}^{\pm }_n (\mathbb R)=\{A\in\textrm{GL}_n(\mathbb R)\mid \det (A)=\pm 1\}.
$$

\begin{Definition}
\label{Definitiontp}
 A representation $\rho\colon\pi_1(\mathcal O^2)\to \mathrm{SL}^{\pm }_n (\mathbb R)$ is called
 \emph{type preserving} if it maps  orientation preserving
 elements in $\pi_1(\mathcal O^2)$
 to matrices of determinant $1$ and orientation reversing
 elements to matrices of determinant~$-1$.
\end{Definition}

We consider two natural embeddings of $\mathrm{SL}^{\pm }_n (\mathbb R)$:
\begin{itemize}
 \item \emph{Orientable embedding.} We view every projective transformation of $\mathbb P^{n-1}$ as an orientation preserving transformation of $\mathbb P ^n$:
 $$
 \begin{array}{rcl}
  \mathrm{SL}^{\pm }_n (\mathbb R)& \hookrightarrow &
  \mathrm{SL}_{n+1}(\mathbb R) \\
  A & \mapsto & \begin{pmatrix}
                A & \\ & \det(A)
                \end{pmatrix}
 \end{array}
 $$
 \item \emph{Type preserving.}
 An element preserves the orientation on   $\mathbb P^{n-1}$ if, and only if, it preserves
 the orientation  on  $\mathbb P^{n}$:
 $$
 \begin{array}{rcl}
  \mathrm{SL}^{\pm }_n (\mathbb R)& \hookrightarrow &
  \mathrm{SL}^{\pm}_{n+1} (\mathbb R) \\
  A & \mapsto & \begin{pmatrix}
                 A & \\ & 1
                \end{pmatrix}
 \end{array}
 $$
\end{itemize}
To state the theorem, we introduce the following notation:
\begin{itemize}
 \item $d_{\mathrm{tp}}=\dim H^1(\mathcal O^2,\mathbb R^n_\rho)$, and
 \item   $d_{\mathrm{oe}}=\dim H^1(\mathcal O^2,\mathbb R^n_{\rho\otimes \alpha})$,
where $\alpha(\gamma)=\det\rho(\gamma)\in\{\pm 1\}$ for $\alpha\in\Gamma$.
\end{itemize}
In the type preserving case, we work with $d_{\mathrm{tp}}$
and in the orientable embedding, with  $d_{\mathrm{oe}}$.

\begin{Theorem}
 \label{Theorem:nonorientable}
Let $\mathcal O^2$ be a compact, non-orientable 2-orbifold with $\chi(\mathcal O^2)<0$, let
$\rho\colon\pi_1(\mathcal O^2)\to \mathrm{SL}^\pm_n(\mathbb R)$ be a type preserving  representation, and let
$d_{\mathrm{oe}}\geq 0 $ and $d_{\mathrm{tp}}\geq 0$  be as above.
Assume that $\rho$ restricted to the orientation covering of $\mathcal O^2$
is $\mathbb C$-irreducible.
\begin{enumerate}[(i)]
 \item For the orientable embedding, a neighborhood of the character of $\rho$ in
 $ X(\mathcal O^2, \mathrm{SL}_{n+1}(\mathbb R))$ is
 homeomorphic to
 $$
 \mathbb R^p\times \mathbb R^b\times
 \begin{cases}
 \mathrm{Cone}( S^{d_{\mathrm{oe}}-1}\times S^{d_{\mathrm{oe}}-1})
 & \textrm{if } n+1\textrm{ is even}
 \\
 \mathrm{Cone}( S^{d_{\mathrm{oe}}-1}\times S^{d_{\mathrm{oe}}-1})/\!\sim
 & \textrm{if } n+1\textrm{ is odd}
 \\
 \end{cases}
 $$
 \item For the type preserving embedding, a neighborhood of the character of $\rho$ in
 $ X(\mathcal O^2, \mathrm{SL}^{\pm }_{n+1}(\mathbb R))$ is
 homeomorphic to
 $$
 \mathbb R^p\times \mathbb R^b\times  \mathrm{Cone}( S^{d_{\mathrm{tp}}-1}\times S^{d_{\mathrm{tp}}-1})/\sim
 $$\end{enumerate}
 In both cases
 $X(\mathcal O^2, \mathrm{SL}^\pm_n(\mathbb R) ) $
 corresponds to $\mathbb R^p\times\{0\}\times\{0\}$, and
 $\sim$ denotes the action of the antipodal map in
 $S^{d-1}\times S^{d-1}\subset
 S^{2 d-1}\subset \mathbb R^{2 d}$.
\end{Theorem}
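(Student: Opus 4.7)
The proof follows the template of Theorem~\ref{Theorem:orientable}: apply the slice theorem at $\rho$, decompose the Lie algebra as a $G_\rho$-module, and compute the real GIT quotient $\mathcal S/\!/ G_\rho$ via Corollary~\ref{Corollary:quotient}. The essential simplification is that for a non-orientable $\mathcal O^2$ the obstruction group $H^2(\Gamma,\mathfrak g)$ vanishes in both the closed and the boundary cases. For the boundary case this is automatic. For the closed case, twisted Poincar\'e duality gives $H^2(\Gamma,V)\cong H^0(\Gamma,V^*\otimes\alpha)^*$ for each $G_\rho$-summand $V$ of $\mathfrak g$; since the restriction of $\rho$ to the orientation-preserving subgroup $\Gamma^+\subset \Gamma$ is $\mathbb C$-irreducible, the spaces of invariants of $\mathfrak g_0$, $\mathbf m_{\mathsf c}$ and $\mathbf m_{\mathsf r}$ under $\Gamma^+$ vanish, and for $\mathbf d$ the twist by $\alpha$ itself kills the invariants because $\alpha$ is nontrivial. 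Hence $\hom(\Gamma,G)$ is smooth at $\rho$, Theorem~\ref{Theorem:Slice} produces a slice $\mathcal S$ that is $G_\rho$-equivariantly bi-analytic to a germ of $H^1(\Gamma,\mathfrak g)$ at the origin, and no quadratic cone arises, explaining why the link in Theorem~\ref{Theorem:nonorientable} carries no $\mathrm{UT}$ factor.

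Next I would decompose $\mathfrak g=\mathfrak g_0\oplus\mathbf m_{\mathsf c}\oplus\mathbf m_{\mathsf r}\oplus\mathbf d$ as in Section~\ref{Section:Decomposition}, with $\Gamma$-module structure depending on the embedding. Conjugating the elementary matrix $e_{i,n+1}$ by $\mathrm{diag}(\rho(\gamma),\alpha(\gamma))$ and by $\mathrm{diag}(\rho(\gamma),1)$ respectively shows that for the orientable embedding $\mathbf m_{\mathsf c}\cong\mathbb R^n_{\rho\otimes\alpha}$ and $\mathbf m_{\mathsf r}\cong\mathbb R^n_{\rho^*\otimes\alpha}$, while for the type preserving embedding $\mathbf m_{\mathsf c}\cong\mathbb R^n_\rho$ and $\mathbf m_{\mathsf r}\cong\mathbb R^n_{\rho^*}$. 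In either case, the Euler characteristic identity combined with the vanishing of $H^0$ and $H^2$ for $\mathbf m_{\mathsf c}$ and $\mathbf m_{\mathsf r}$ forces $\dim H^1(\Gamma,\mathbf m_{\mathsf c})=\dim H^1(\Gamma,\mathbf m_{\mathsf r})=-n\chi(\mathcal O^2)$, which equals $d_{\mathrm{tp}}$ or $d_{\mathrm{oe}}$ depending on the case.

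To describe $G_\rho$, the $\mathbb C$-irreducibility of $\rho|_{\Gamma^+}$ forces any element commuting with the embedded $\rho(\Gamma)$ to preserve the block decomposition $\mathbb R^{n+1}=\mathbb R^n\oplus\mathbb R$ and to act as a scalar on $\mathbb R^n$. For the orientable embedding in $\mathrm{SL}_{n+1}(\mathbb R)$ this gives $G_\rho=\{\mathrm{diag}(tI_n,t^{-n}):t\in\mathbb R^*\}$, and the parity dichotomy is exactly as in the proof of Theorem~\ref{Theorem:orientable}: the generator $t=-1$ of the non-identity component is central when $n+1$ is even and acts as the antipodal on $\mathbf m_{\mathsf c}\oplus\mathbf m_{\mathsf r}$ when $n+1$ is odd. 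For the type preserving embedding in $\mathrm{SL}^{\pm}_{n+1}(\mathbb R)$ one has $G_\rho=\{\mathrm{diag}(tI_n,\pm t^{-n}):t\in\mathbb R^*\}$, and the extra component generated by $\mathrm{diag}(I_n,-1)$ always acts as the antipodal on $\mathbf m_{\mathsf c}\oplus\mathbf m_{\mathsf r}$ modulo the center, forcing the antipodal identification in both parities.

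Finally, the identity component of $G_\rho$ acts trivially on $H^1(\Gamma,\mathfrak g_0)\cong\mathbb R^p$ and $H^1(\Gamma,\mathbf d)\cong\mathbb R^b$ and by weights $\pm(n+1)$ on $H^1(\Gamma,\mathbf m_{\mathsf c})$ and $H^1(\Gamma,\mathbf m_{\mathsf r})$, so the Kempf-Ness slice $\{|x|=|y|\}\subset\mathbb R^d\oplus\mathbb R^d$ gives a quotient homeomorphic to $\mathrm{Cone}(S^{d-1}\times S^{d-1})$. Combining with the trivial factors and then taking the discrete quotient coming from the non-identity components of $G_\rho$ produces the two cases of the statement. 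The main bookkeeping step will be verifying that the discrete quotients from the non-identity components of $G_\rho$ descend to precisely the antipodal involution $\sim$ on $S^{d-1}\times S^{d-1}\subset \mathbb R^{2d}$ claimed in the statement, and that in the orientable embedding with $n+1$ even this involution is trivial because the non-identity generator has become central.
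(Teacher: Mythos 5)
Your overall strategy coincides with the paper's: apply the slice theorem at $\rho$, decompose $\mathfrak g$ as a $G_\rho$-module, and compute $\mathcal S/\!/G_\rho$, using that for a non-orientable $\mathcal O^2$ the obstruction $H^2(\Gamma,\mathfrak g)$ vanishes so one is always in the boundary situation of Theorem~\ref{Theorem:orientable}. Your identification of the module structures on $\mathbf m_{\mathsf c},\mathbf m_{\mathsf r}$ in the two embeddings, the description of $G_\rho$, and the bookkeeping of which discrete components act as the antipodal on $\mathbf m_{\mathsf c}\oplus\mathbf m_{\mathsf r}$ are all correct and match the paper's (terse) proof and Lemma~\ref{LemmaH20}.

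There is, however, one step that is wrong as written. You assert that the Euler characteristic identity forces $\dim H^1(\Gamma,\mathbf m_{\mathsf c})=\dim H^1(\Gamma,\mathbf m_{\mathsf r})=-n\chi(\mathcal O^2)$. The formula $\chi(\mathcal O^2,V)=\chi(\mathcal O^2)\dim V$ is only valid when $\mathcal O^2$ is a manifold; for an orbifold the correct identity is the cellular one \eqref{eqn:twistedEuler}, whose right-hand side involves $\dim V^{\mathrm{Stab}(\tilde e)}$ and carries corrections at the singular locus. Indeed $-n\chi(\mathcal O^2)$ is typically not even an integer when cone points or corner reflectors are present, so the claimed value cannot be $\dim H^1$. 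What the argument actually needs is only the equality $\dim H^1(\Gamma,\mathbf m_{\mathsf c})=\dim H^1(\Gamma,\mathbf m_{\mathsf r})$, which does hold: since $\mathbf m_{\mathsf r}\cong(\mathbf m_{\mathsf c})^*$ and every local stabilizer is a finite group, semisimplicity gives $\dim V^{\mathrm{Stab}(\tilde e)}=\dim(V^*)^{\mathrm{Stab}(\tilde e)}$ for every cell, so \eqref{eqn:twistedEuler} yields the same Euler characteristic for both modules; combined with $H^0=H^2=0$ this gives the desired equality of $H^1$ dimensions. Separately, your vanishing argument for $H^2(\Gamma,\mathfrak g)$ invokes twisted Poincar\'e duality for non-orientable orbifolds, which the paper only states (Theorem~\ref{TheoremPD}) in the orientable case; the paper's Lemma~\ref{LemmaH20} instead passes to the orientation covering $\widetilde{\mathcal O^2}$ and notes that the deck involution acts by $-1$ on $H^2(\widetilde{\mathcal O^2},\mathbf d)\cong\mathbb R$. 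Your version is true but should either cite the twisted duality or be replaced by the covering argument to stay within the paper's stated toolkit.
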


The theorem has the same proof as
Theorem~\ref{Theorem:orientable} in the orientable case with some minor changes.
Notice that by Lemma~\ref{LemmaH20} below,
$H^2(\mathcal O^2,\mathbb R)=0$, so there is no obstruction to integrability and we are
in the same situation as when the orientable orbifold has boundary.
The minor changes in the proof depend on the kind of embedding:
\begin{itemize}
 \item  For the orientable embedding, in
the $\pi_1(\mathcal O^2)$-module $\mathbb R^n$, instead of the action of
$\rho$, we consider the action of $\rho\otimes\alpha$.
\item
 In the orientable embedding we have to care about the parity of $n+1$, but in the type preserving embedding not, because the group
 $\mathrm{SL}^{\pm }_{n+1} (\mathbb R)$ always contains the matrix
 $\mathrm{diag}(-1,\dotsc,-1,1)$.
\end{itemize}

%
%

\begin{Lemma}
\label{LemmaH20}
 Let $\mathcal O^2$ and $\rho$ be as in the theorem.
 Then $H^2(\mathcal O^2,\mathfrak g)=0$.
\end{Lemma}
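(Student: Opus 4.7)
The plan is to pass to the orientation double cover and exploit the fact that we are working over $\mathbb R$ to reduce cohomology of $\Gamma=\pi_1(\mathcal O^2)$ to $\mathbb Z/2$-invariants of cohomology of the index-two subgroup $\Gamma_0\subset\Gamma$ corresponding to the orientation cover $\tilde{\mathcal O}^2$. Because $|\Gamma/\Gamma_0|=2$ is invertible in $\mathbb R$, the Lyndon–Hochschild–Serre spectral sequence collapses and yields
\[
H^2(\Gamma,\mathfrak g)\cong H^2(\Gamma_0,\mathfrak g)^{\mathbb Z/2},
\]
where $\mathbb Z/2=\Gamma/\Gamma_0$ acts in the combined way on cochains and on the coefficient module $\mathfrak g=\mathfrak{sl}_{n+1}(\mathbb R)$.

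If $\partial\mathcal O^2\neq\emptyset$, then $\tilde{\mathcal O}^2$ also has nonempty boundary, so it virtually has the homotopy type of a $1$-complex, giving $H^2(\Gamma_0,\mathfrak g)=0$ and the result immediately. In the remaining case $\partial\mathcal O^2=\emptyset$, the cover $\tilde{\mathcal O}^2$ is a closed, orientable, aspherical 2-orbifold, and $\rho\vert_{\Gamma_0}$ factors through $\mathrm{SL}_n(\mathbb R)$ and is $\mathbb C$-irreducible by hypothesis. The decomposition $\mathfrak g=\mathfrak g_0\oplus\mathbf m_{\mathsf c}\oplus\mathbf m_{\mathsf r}\oplus\mathbf d$ of Section~\ref{Section:Decomposition} is $\Gamma_0$-equivariant, and Lemma~\ref{Lemma:cupprodcut}(ii) applies to give
\[
H^2(\Gamma_0,\mathfrak g)=H^2(\Gamma_0,\mathbf d)\cong\mathbb R,
\]
since the only nonzero summand comes from the trivial line $\mathbf d$ via Poincaré duality.

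It thus remains to verify that the $\mathbb Z/2$ action on the line $H^2(\Gamma_0,\mathbf d)\cong\mathbb R$ is by $-1$, so the invariants vanish. Fix any orientation-reversing $\tau\in\Gamma\setminus\Gamma_0$. Via the Poincaré duality identification with the fundamental class $[\tilde{\mathcal O}^2]$, the action of $\tau$ on $H^2(\Gamma_0,\mathbf d)$ is the product of its action on the fundamental class, which is $-1$ because $\tau$ reverses orientation, and the scalar by which $\mathrm{Ad}(\rho(\tau))$ acts on the one-dimensional line $\mathbf d=\langle\mathrm{diag}(1,\dotsc,1,-n)\rangle$. In either of the two chosen embeddings of $\mathrm{SL}^{\pm}_n(\mathbb R)$, the element $\rho(\tau)$ maps to a block-diagonal matrix $\mathrm{diag}(\rho(\tau),\varepsilon)$ with $\varepsilon\in\{\pm 1\}$, and conjugation by such a block-diagonal matrix fixes the centralising diagonal line $\mathbf d$ pointwise. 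Hence the coefficient action is trivial, the total action is $-1$, and $H^2(\Gamma,\mathfrak g)=H^2(\Gamma_0,\mathbf d)^{\mathbb Z/2}=0$.

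The main obstacle is the last step: one must both observe that $\mathbf d$ is preserved by the $\Gamma$-action (which requires the embeddings to respect the block-decomposition, as they do) and compute the combined sign coming from orientation reversal together with the action of $\rho(\tau)$ on $\mathbf d$. Everything else is a standard transfer argument combined with Lemma~\ref{Lemma:cupprodcut}.
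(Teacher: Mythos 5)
Your proof is correct and takes the same route as the paper: pass to the orientation double cover, identify $H^2(\mathcal O^2,\mathfrak g)$ with the $\mathbb Z/2$-invariants, reduce to $H^2(\widetilde{\mathcal O^2},\mathbf d)\cong\mathbb R$ via Lemma~\ref{Lemma:cupprodcut}, and observe that the deck transformation acts by $-1$. You supply more detail than the paper on why the coefficient action of $\Gamma$ on $\mathbf d$ is trivial under both embeddings (the paper compresses this into ``by the same reason why $H^2(\mathcal O^2,\mathbb R)=0$''), but the underlying argument is identical.
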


\begin{proof}[Proof of the lemma]
	Let 
	 $\widetilde{\mathcal O^2}$ denote the orientation covering of $\mathcal O^2$, the cohomology of $\mathcal O^2$ is isomorphic to the invariant subspace of the cohomology of  $\widetilde{\mathcal O^2}$ invariant by the action of 
	 $\mathbb{Z}/2\mathbb{Z}$, the group of deck transformations:
	 $H^2(\mathcal O^2,\mathfrak g)\cong H^2( \widetilde{\mathcal O^2}, 
	 \mathfrak g   )^{\mathbb{Z}/2\mathbb{Z}  } 
	 $. By Lemma~\ref{Lemma:cupprodcut},
	 $H^2( \widetilde{\mathcal O^2},\mathfrak g )=H^2( \widetilde{\mathcal O^2},\mathbf d )\cong 
	 H^2( \widetilde{\mathcal O^2},\mathbb R )\cong \mathbb R  $, and 
	 $\mathbb{Z}/2\mathbb{Z}$ acts by change of sign on $\mathbb{R}$, so the invariant
	 	subspace is trivial (by the same reason why
	 	$H^2( {\mathcal O^2},\mathbb R )=0$). 
%
\end{proof}

\section{Convex projective 2-orbifolds}
\label{Section:proj2}

Let $\mathcal O^2$ be a compact 2-orbifold with
negative Euler characteristic, hence hyperbolic.
The holonomy representation of its
convex projective structure lies in
$\mathrm{PGL}_3(\mathbb R)$, and by Choi-Goldman
\cite{ChoiGoldman} in lies in the same component
in the variety of representations as the holonomy of
its hyperbolic structure.
The holonomy of a hyperbolic structure in $\mathrm{PO}(2,1)$ lifts to
$\mathrm{O}(2,1)$, thus:

\begin{Remark}
\label{Remark:lift}
The projective holonomy of a convex projective structure on a 2-orbifold lifts to
 $\mathrm{SL}^{\pm}_3(\mathbb R)$, or
$\mathrm{SL}_3(\mathbb R)$ when it is orientable.
\end{Remark}

Irreducibility over $\mathbb R$ of the holonomy is well known. Since $3$ is odd, we also have:

\begin{Remark}
The holonomy of a convex projective structure of
$\mathcal O^3$ in  $\mathrm{SL}^{\pm}_3(\mathbb R)$
is $\mathbb C$-irreducible.
\end{Remark}

\subsection{Orientable projective 2-orbifolds}

For $\mathcal O^2$ \emph{orientable} and compact, we consider the following spaces:
\begin{itemize}
  \item The Teichm\"uller space $\mathrm{Teich}(\mathcal O^2)\cong \mathbb R ^t$, which is an open set in
  a component of the variety of characters,
  $X_0(\mathcal O^2, \textrm{SO}(2,1))$
  (the whole component when $\mathcal O^2$ is closed).
   \item The space of  convex projective structures  $\mathrm{Proj}_{\mathrm{cvx}}(\mathcal O^2)$, which is homeomorphic to a cell
   $\mathbb R ^p$, as proved by  Choi-Goldman
   \cite{ChoiGoldman}. The
   projective structures are required to have  principal geodesic boundary components, so $\mathrm{Proj}_{\mathrm{cvx}}(\mathcal O^2)$ is not the whole component  of characters
   $X_0(\mathcal O^2, \mathrm{SL}_3(\mathbb R))$,
   but an open subset (see Remark~\ref{Remark:lift} regarding the lift from $\mathrm{PSL}$ to $\mathrm{SL}$).
   %
\end{itemize}

For $\mathcal O^2$ compact and orientable, since the standard representation of $\mathrm{SO}(2,1)$
on $\mathbb R^3$
is equivalent to the adjoint
representation in $\mathfrak{so}(2,1)$:
\begin{equation}
	\label{eqn:DimTeich}
t=\dim( \mathrm{Teich}(\mathcal O^2))=
\dim (H^1(\mathcal O^2,\mathbb R^3_\rho))
= -3\chi(\vert \mathcal O^2\vert ) + 2 c(\mathcal O^2),
\end{equation}
%
where $\vert \mathcal O^2\vert $ denotes the underlying surface of the orbifold and $c(\mathcal O^2)$ is the number of cone points of
 $\mathcal O^2$. See  \cite{Weil} or  \cite{PortiDim}.
Theorem~\ref{Theorem:orientable} immediately yields:

\begin{Corollary}
\label{Corollary:projective}
 Let $\mathcal O^2$ be a compact orientable 2-orbifold,
 with negative Euler characteristic and a convex projective structure. Then
 a neighborhood of the character of its projective holonomy  in
 $ X(\mathcal O^2, \mathrm{SL}_4(\mathbb R))$ is
 homeomorphic to
 $$
 \begin{cases}
        \mathbb R^{p+b}\times  \mathrm{Cone}( \mathrm{UT}( S^{t-1})) & \textrm{if }\mathcal O^2 \textrm{ is closed}\\
         \mathbb R^{p+b}\times
         \mathrm{Cone}( S^{t-1}\times S^{t-1}) & \textrm{if }\mathcal O^2 \textrm{ has boundary}
        \end{cases}
 $$ where
 $t=\dim ( \mathrm{Teich}(\mathcal O^2) )$, $p=\dim ( \mathrm{Proj}_{\mathrm{cvx}}(\mathcal O^2))$
 and $t=\dim ( H^1(\mathcal{O}^2,\mathbb R))$.
\end{Corollary}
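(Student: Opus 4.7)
The plan is to deduce the corollary as a direct specialization of Theorem~\ref{Theorem:orientable} to the case $n=3$. First I would verify that the hypotheses are met: by Remark~\ref{Remark:lift} the projective holonomy $\rho$ of a convex projective structure lifts from $\mathrm{PGL}_3(\mathbb R)$ to $\mathrm{SL}_3(\mathbb R)$ (using orientability of $\mathcal O^2$), and by the remark immediately preceding the corollary, $\rho$ is $\mathbb C$-irreducible as a representation into $\mathrm{SL}_3(\mathbb R)$. Hence Theorem~\ref{Theorem:orientable} applies with $n+1 = 4$, which is \emph{even}. Consulting Table~\ref{Table:Table1}, the link $X$ of the singularity is $\mathrm{UT}(S^{d-1})$ in the closed case and $S^{d-1}\times S^{d-1}$ in the boundary case, with no antipodal quotient, exactly matching the formula claimed.

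The remaining task is to identify the three integer invariants appearing in Theorem~\ref{Theorem:orientable} (denoted there by $p$, $b$, $d$) with the quantities $p$, $b$, $t$ appearing in the corollary. The number $b = \dim H^1(\mathcal O^2,\mathbb R)$ is literally the same. For $p$, the theorem identifies $\mathbb R^p \times \{0\} \times \{0\}$ with a smooth neighborhood in $X(\mathcal O^2,\mathrm{SL}_3(\mathbb R))$ of $\chi_\rho$; since by Choi--Goldman the space $\mathrm{Proj}_{\mathrm{cvx}}(\mathcal O^2)$ is an open subset of a component of $X(\mathcal O^2,\mathrm{SL}_3(\mathbb R))$ containing $\chi_\rho$, and this component is smooth at $\chi_\rho$ by Proposition~\ref{Proposition:Goldman} (since $\rho$ is $\mathbb C$-irreducible in $\mathrm{SL}_3(\mathbb R)$), we have $p = \dim \mathrm{Proj}_{\mathrm{cvx}}(\mathcal O^2)$.

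The remaining identification is $d = t$, where $d = \dim H^1(\mathcal O^2,\mathbb R^3_\rho)$ and $t = \dim \mathrm{Teich}(\mathcal O^2)$. The potential subtlety here is that the formula \eqref{eqn:DimTeich} for $t$ is stated for the hyperbolic holonomy $\rho_0 \in \mathrm{SO}(2,1)$ via the identification $\mathbb R^3 \cong \mathfrak{so}(2,1)$ of the standard and adjoint representations, whereas our $\rho$ may be any convex projective holonomy. I would resolve this by observing that both $\rho_0$ and $\rho$ are $\mathbb C$-irreducible in $\mathrm{SL}_3(\mathbb R)$, so by Lemma~\ref{Lemma:H0trivial} and Poincar\'e duality (Theorem~\ref{TheoremPD}) one has $H^0(\mathcal O^2,\mathbb R^3_\rho) = H^2(\mathcal O^2,\mathbb R^3_\rho)=0$ along the entire deformation (the orbifold-Euler-characteristic computation then forces $\dim H^1$ to be the constant topological invariant in \eqref{eqn:DimTeich}). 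Alternatively, one may argue directly by deformation: connecting $\rho$ to $\rho_0$ inside $\mathrm{Proj}_{\mathrm{cvx}}(\mathcal O^2)$ (which is connected by Choi--Goldman) and using that $\dim H^1(\mathcal O^2,\mathbb R^3_\bullet)$ is upper semi-continuous and constant at smooth points.

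The main (mild) obstacle is therefore the $d=t$ identification rather than any genuinely new input; once this dimensional bookkeeping is in place, substitution into Theorem~\ref{Theorem:orientable} yields the two cases of the corollary verbatim.
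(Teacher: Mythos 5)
Your proof is correct and follows the paper's own route: specialize Theorem~\ref{Theorem:orientable} to $n=3$ (so $n+1=4$ is even), read off Table~\ref{Table:Table1}, and match $p$, $b$, $d$ with the orbifold invariants --- the paper treats this as "immediate" while you supply the bookkeeping. One small inaccuracy: Lemma~\ref{Lemma:H0trivial} asserts vanishing of invariants in the \emph{Lie algebra} $\mathfrak g$ under $\mathrm{Ad}\,\rho$, not in $\mathbb R^3_\rho$; for the standard module you need the (even easier) observation that an invariant vector in $\mathbb C^3$ would give a proper $\rho$-invariant subspace, contradicting $\mathbb C$-irreducibility, and similarly for $\rho^*$ to kill $H^2$ via Theorem~\ref{TheoremPD}; your first ("Euler characteristic via \eqref{eqn:twistedEuler}") justification of $d=t$ is the clean one, whereas the "upper semicontinuity" alternative would require an extra step to upgrade to constancy.
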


Notice that for  $\mathcal O^2$ compact, orientable and hyperbolic,
by \eqref{eqn:DimTeich}
$t=\dim  (\mathrm{Teich}(\mathcal O^2))=0$ if and only if
$\mathcal O^2$ is a turnover (a 2-sphere with 3 cone points).
In addition, for a turnover $H^1(\mathcal O^2,\mathbb R)=0$.
So in this case we have:

\begin{Corollary}
\label{Corollary:turnover}
  Let  $\mathcal O^2$ be a turnover, every deformation in
  $\mathrm{SL}_4(\mathbb R)$ of its projective holonomy is
  conjugate to a deformation in  $\mathrm{SL}_3(\mathbb R)$.
\end{Corollary}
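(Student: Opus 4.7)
The plan is to apply Corollary~\ref{Corollary:projective} directly, after verifying that the two relevant numerical invariants of a turnover vanish. Recall that a (hyperbolic) turnover $\mathcal O^2$ has underlying surface $\vert\mathcal O^2\vert = S^2$ and exactly $c(\mathcal O^2)=3$ cone points, so it is closed and orientable. First I would compute the Teichm\"uller dimension from~\eqref{eqn:DimTeich}:
\[
t = \dim \mathrm{Teich}(\mathcal O^2) = -3\,\chi(S^2) + 2\cdot 3 = -6 + 6 = 0.
\]
Next I would check that $b = \dim H^1(\mathcal O^2,\mathbb R) = 0$. The fundamental group of a hyperbolic turnover is a triangle group $\langle x,y,z \mid x^p, y^q, z^r, xyz\rangle$, whose abelianization is a finite group (each generator is torsion and there is one linear relation in the free abelianization), so $H^1(\mathcal O^2,\mathbb R) = 0$.

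Now I would feed these values into Corollary~\ref{Corollary:projective}. Since $\mathcal O^2$ is closed, a neighborhood of the character of the projective holonomy in $X(\mathcal O^2,\mathrm{SL}_4(\mathbb R))$ is homeomorphic to
\[
\mathbb R^{p+b}\times \mathrm{Cone}(\mathrm{UT}(S^{t-1}))
= \mathbb R^{p}\times \mathrm{Cone}(\mathrm{UT}(S^{-1})).
\]
Using the conventions stated after Table~\ref{Table:Table1}, namely $S^{-1}=\emptyset$ and $\mathrm{Cone}(\emptyset)$ is a point, the neighborhood reduces to $\mathbb R^{p}$.

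Finally, by Theorem~\ref{Theorem:orientable}, the factor $\mathbb R^{p}\times\{0\}\times\{0\}$ corresponds to a smooth neighborhood in the variety of characters $X(\mathcal O^2,\mathrm{SL}_3(\mathbb R))$. Therefore the local map $X(\mathcal O^2,\mathrm{SL}_3(\mathbb R))\to X(\mathcal O^2,\mathrm{SL}_4(\mathbb R))$ is a homeomorphism onto a neighborhood of the character of the projective holonomy, which is precisely the statement that every nearby deformation in $\mathrm{SL}_4(\mathbb R)$ is conjugate to one coming from $\mathrm{SL}_3(\mathbb R)$. The main (minor) subtlety is verifying $b=0$ rather than $t=0$: the latter is just an arithmetic check, while the former uses that the triangle group is generated by torsion elements.
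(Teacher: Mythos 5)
Your proposal is correct and follows essentially the same route as the paper: compute $t=0$ via \eqref{eqn:DimTeich} and $b=0$ from the triangle group, then invoke Corollary~\ref{Corollary:projective} together with the convention $\mathrm{Cone}(\emptyset)=\{\mathrm{pt}\}$. The only difference is that you spell out the justification of $b=0$ via the abelianization of the triangle group, whereas the paper simply asserts it.
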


By \cite{ChoiGoldman}, the space  $\mathrm{Proj}_{\mathrm{cvx}}(\mathcal O^2)$ may be
non-trivial for a turnover. In fact, if all cone points of $\mathcal O^2$ have order at least three, then
$\dim (\mathrm{Proj}_{\mathrm{cvx}}(\mathcal O^2))=2$.
If one of the cone points has order two, then $ \mathrm{Proj}_{\mathrm{cvx}}(\mathcal O^2)$
is a point. By hyperbolicity, at most one of the cone points of the turnover
has order 2.

\subsection{Non-orientable projective 2-orbifolds}

We follow the notation of Subsection~\ref{subsection:nonorientable}. In particular
 let $\alpha\colon\pi_1(\mathcal O^2)\to \{\pm 1\}$
 be the orientation representation: $\alpha(\gamma)=1$
 if $\gamma$ preserves the orientation and $\alpha(\gamma)=-1$ if $\gamma$ reverses it.

 The group of isometries of hyperbolic plane is identified to $\mathrm{O}_0(2,1)$, the components of
 the group of Lorentz transformations of $\mathbb R^2_1$
 that preserve  each leaf of the hyperboloid.

 \begin{Lemma}
 \label{lemma:equiv}
  Let $\rho\colon \pi_1(\mathcal O^2)\to
  \mathrm{O}_0(2,1)$ be the holonomy of a  hyperbolic
  structure.
  Then the representation
 $\rho\otimes \alpha$ is equivalent to the adjoint representation, acting on $\mathfrak so(2,1)$.
 \end{Lemma}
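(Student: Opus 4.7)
The plan is to reduce the claim to an equivariance statement about $\mathrm{O}(2,1)$-modules, and then produce an explicit isomorphism via the Lorentzian cross product. The two ingredients are, first, the identification $\alpha=\det\circ\rho$, and second, a natural $\mathrm{O}(2,1)$-equivariant isomorphism between $\mathbb R^{3}\otimes\det$ and $\mathfrak{so}(2,1)$ with the adjoint action.

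First I would verify that $\alpha(\gamma)=\det\rho(\gamma)$ for every $\gamma\in\pi_{1}(\mathcal O^{2})$. The group $\mathrm{O}_{0}(2,1)$, which is identified with the full group of isometries of the hyperbolic plane, has two connected components: the identity component $\mathrm{SO}_{0}(2,1)$, consisting of the orientation preserving isometries, is precisely the subgroup of determinant $+1$, while the other component consists of orientation reversing isometries and has determinant $-1$. The two characters therefore coincide on each component.

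The heart of the proof is then the construction of an $\mathrm{O}(2,1)$-equivariant isomorphism $\varphi\colon \mathbb R^{3}\otimes\det\to\mathfrak{so}(2,1)$. Let $\langle\cdot,\cdot\rangle$ denote the Minkowski form on $\mathbb R^{3}$ and $\omega$ the standard volume form, and define the Lorentzian cross product by $\langle v\times w,u\rangle=\omega(v,w,u)$ for all $u$. Set $\varphi(v)(w)=v\times w$. Skew-symmetry of $\omega$ in its last two arguments gives $\varphi(v)\in\mathfrak{so}(2,1)$, and non-degeneracy of both $\langle\cdot,\cdot\rangle$ and $\omega$ makes $\varphi$ a linear isomorphism. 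The key computation is the transformation law $g^{*}\omega=\det(g)\,\omega$ for $g\in\mathrm{O}(2,1)$, which yields $g(v\times w)=\det(g)\,(gv\times gw)$ and hence
\begin{equation*}
\mathrm{Ad}(g)\circ\varphi(v)=\det(g)\,\varphi(g v).
\end{equation*}
This is precisely the statement that $\varphi$ intertwines the $\mathrm{O}(2,1)$-action on $\mathbb R^{3}$ twisted by $\det$ with the adjoint action on $\mathfrak{so}(2,1)$.

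Composing $\varphi$ with $\rho\colon \pi_{1}(\mathcal O^{2})\to\mathrm{O}_{0}(2,1)$ and invoking the identification $\alpha=\det\circ\rho$ from the first step then produces the desired equivalence between $\mathbb R^{3}_{\rho\otimes\alpha}$ and $\mathfrak{so}(2,1)$ with its adjoint $\pi_{1}(\mathcal O^{2})$-action. I do not anticipate a serious obstacle, as both steps are essentially linear-algebraic; the only delicate point is verifying that the sign $\det(g)$ produced by the transformation rule for $\omega$ is exactly the one absorbed by the twist by $\alpha$.
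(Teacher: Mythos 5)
Your proof is correct, and it takes a genuinely different route from the paper. The paper factors the argument into two cited facts: first, that the map $g\mapsto\det(g)\,g$ is an isomorphism $\mathrm{O}_0(2,1)\to\mathrm{SO}(2,1)$ absorbing the twist by $\alpha=\det\circ\rho$, and second, that the tautological representation of $\mathrm{SO}(2,1)$ on $\mathbb{R}^3$ is equivalent to the adjoint representation on $\mathfrak{so}(2,1)$, with this last assertion justified as ``well known'' by appeal to the classification of representations of $\mathrm{SO}(3,\mathbb{C})\cong\mathrm{PSL}(2,\mathbb{C})$. You replace the appeal to classification theory with a single explicit intertwiner: the Lorentzian cross product $\varphi(v)=v\times(\cdot)$. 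The computation $g(v\times w)=\det(g)\,(gv\times gw)$, coming from $g^*\omega=\det(g)\,\omega$, delivers the equivariance $\mathrm{Ad}(g)\circ\varphi=\det(g)\,\varphi\circ g$ in one stroke, which simultaneously proves the paper's ``well known'' assertion (b) in the orientation-preserving case and exhibits exactly where the determinant twist enters. Your version is more self-contained and constructive; the paper's version is shorter by leaning on the standard classification. Both establish the same isomorphism $\mathbb{R}^3_{\rho\otimes\alpha}\cong\mathfrak{so}(2,1)$ of $\pi_1(\mathcal O^2)$-modules.
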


 \begin{proof}
 We have isomorphisms of Lie groups
 $$
 \mathrm{O}_0(2,1)  \cong \mathrm{PO}_0(2,1)\cong \mathrm{PO}(2,1)\cong \mathrm{PSO}(2,1)\cong \mathrm{SO}(2,1).
 $$
 Then the lemma follows from the following two assertions:
 \begin{enumerate}[a)]
  \item The isomorphism $\mathrm{O}_0(2,1) \cong  \mathrm{SO}(2,1)$ consists in multiplying each matrix  by its determinant. Notice also that
  $\alpha(\gamma)=\mathrm{det}(\rho(\alpha))$.
  \item The tautological representation of $\mathrm{SO}(2,1)$
is equivalent to its adjoint action on $\mathfrak{so}(2,1)$.
 \end{enumerate}
Assertion a) is easily checked by looking at the action on Lorentz space. Assertion b) is ``well known'', and follows for instance
from the classification of representations of
$\mathrm{PSL}(2,\mathbb{C})\cong \mathrm{SO}(3,\mathbb{C})$.
%
%
 \end{proof}

 We view the holonomy of projective structures in
 $\mathrm{SL}^{\pm}_3(\mathbb R)$, as deformations
 of the hyperbolic holonomy in $\mathrm{O}_0(2,1)$,
 in particular type preserving
 (Definition~\ref{Definitiontp}). Recall from Subsection~\ref{subsection:nonorientable}
 that
 $$d_{\mathrm{tp}}=\dim H^1(\mathcal O ^2,\mathbb R^n_\rho)\qquad\textrm{ and }\qquad
 d_{\mathrm{oe}}=\dim H^1(\mathcal O ^2,\mathbb R^n_{\rho\otimes \alpha}).
 $$

 \begin{Lemma}
 \label{lemma:dims}
  Let $\mathcal O^2$ be a compact non-orientable hyperbolic 2-orbifold. Let $\rho\colon \pi_1(\mathcal O^2)\to \mathrm{SL}^{\pm}_3(\mathbb R)$
  be the holonomy of a convex projective structure.
 Then:
 \begin{enumerate}[(a)]
 \item $ H^i(\mathcal O ^2,\mathbb R^n_\rho)
 = H^i(\mathcal O ^2,\mathbb R^n_{\rho\otimes \alpha})=0$, for $i=0,2$.
  \item $d_{\mathrm{oe}}=\dim( \mathrm{Teich}(\mathcal O^2)) $.
  \item $d_{\mathrm{tp}}= \dim (\mathrm{Teich}(\mathcal O^2))-\mathsf{f}(\partial \mathcal O^2)  $.
 \end{enumerate}
where $\mathsf{f}(\partial \mathcal O^2)  $ denotes the number of
components of the boundary that are full 1-orbifolds.
\end{Lemma}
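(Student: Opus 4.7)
The plan is to prove the three parts in turn, systematically exploiting the orientation double cover $\widetilde{\mathcal{O}^2}\to\mathcal{O}^2$ and the averaging isomorphism $H^i(\mathcal{O}^2,W)\cong H^i(\widetilde{\mathcal{O}^2},W)^{\mathbb{Z}/2}$, which holds with real coefficients. For part (a), I would observe that $\alpha$ is trivial on $\pi_1(\widetilde{\mathcal{O}^2})$, so both $\mathbb{R}^3_\rho$ and $\mathbb{R}^3_{\rho\otimes\alpha}$ restrict to the same $\widetilde{\Gamma}$-module, which is $\mathbb{C}$-irreducible by hypothesis. Vanishing of $H^0$ on $\widetilde{\mathcal{O}^2}$ is immediate from irreducibility; vanishing of $H^2(\widetilde{\mathcal{O}^2},\mathbb{R}^3)$ follows either because $\partial\widetilde{\mathcal{O}^2}\neq\emptyset$, or, in the closed case, from Poincar\'e duality combined with $\mathbb{C}$-irreducibility of the contragredient $\rho^*$, which makes $H^0$ of the dual module vanish. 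Averaging then transfers both vanishings to $\mathcal{O}^2$.

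For part (b), I would invoke Lemma~\ref{lemma:equiv} to identify $\mathbb{R}^3_{\rho\otimes\alpha}$ with the adjoint module $\mathfrak{so}(2,1)$; then $d_{\mathrm{oe}}=\dim H^1(\mathcal{O}^2,\mathfrak{so}(2,1))$ is the standard count of infinitesimal deformations of the hyperbolic structure on $\mathcal{O}^2$, that is, $\dim\mathrm{Teich}(\mathcal{O}^2)$.

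For part (c), the strategy is to use (a) to rewrite $d_{\mathrm{oe}}-d_{\mathrm{tp}}=\chi(\mathcal{O}^2,\mathbb{R}^3_\rho)-\chi(\mathcal{O}^2,\mathbb{R}^3_{\rho\otimes\alpha})$ and then expand both Euler characteristics via the orbifold formula $\chi(\mathcal{O}^2,W)=\sum_\sigma(-1)^{\dim\sigma}\dim W^{\Gamma_\sigma}$ on a CW decomposition adapted to the orbifold structure. Any cell whose isotropy group lies in the orientation-preserving subgroup (interior cells, cone points, and regular boundary cells) produces identical local invariants for the two modules and so contributes nothing; the difference is supported entirely on cells in the silvered singular set. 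A direct computation of the traces of reflections and of dihedral elements in $\mathrm{O}_0(2,1)$ acting on $\mathbb{R}^3$ shows that at each silvered $1$-cell the difference of invariant dimensions is $+1$ (since a reflection has eigenvalues $1,1,-1$), and at each corner reflector $0$-cell the difference of invariants is again $+1$. Assembling these with signs $(-1)^{\dim\sigma}$, the contributions around any silvered circle (with or without corner reflectors) cancel in pairs, whereas a silvered arc capped by two corner reflectors contributes $+1$; grouping the contributions by boundary components of $\mathcal{O}^2$ yields $\mathsf{f}(\partial\mathcal{O}^2)$ exactly. The main obstacle is this final combinatorial bookkeeping, particularly in matching the sum to the definition of the ``full 1-orbifold'' boundary components and in handling components of $\partial|\mathcal{O}^2|$ where silvered arcs, free arcs, and corner reflectors coexist.
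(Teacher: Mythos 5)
Your overall strategy---averaging over the orientation double cover for (a), Lemma~\ref{lemma:equiv} for (b), and the orbifold Euler characteristic formula for (c)---tracks the paper's proof faithfully, but there is a genuine gap in your treatment of (b). The representation $\rho$ in the statement is the holonomy of a convex projective structure, so it lands in $\mathrm{SL}^{\pm}_3(\mathbb R)$ and need not take values in $\mathrm{O}_0(2,1)$. Lemma~\ref{lemma:equiv} identifies $\rho\otimes\alpha$ with the adjoint module only when $\rho$ is the Fuchsian holonomy, so the identification $\mathbb R^3_{\rho\otimes\alpha}\cong\mathfrak{so}(2,1)$ that your (b) rests on does not apply to the given $\rho$. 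The paper closes this gap by first proving (b) for the hyperbolic holonomy using Lemma~\ref{lemma:equiv}, and then observing that by (a) we have $d_{\mathrm{oe}}=-\chi(\mathcal O^2,\mathbb R^3_{\rho\otimes\alpha})$, where the right-hand side is computed via~\eqref{eqn:twistedEuler} from the dimensions of the invariant subspaces under the finite cell stabilizers; those dimensions depend only on conjugacy classes of images of finite-order elements, which are locally constant on a connected component of the representation variety, and the convex projective holonomy lies in the same component as the hyperbolic one by Choi--Goldman. Without this rigidity-of-Euler-characteristic step your argument only proves (b) for $\rho$ Fuchsian.

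A smaller point concerns the combinatorics in (c). The $0$-cells where a silvered arc meets a free boundary arc have cyclic $\mathbb{Z}/2$ isotropy, not dihedral; calling them ``corner reflectors'' conflates them with the genuine dihedral points in the interior of a mirror component. The arithmetic survives because both kinds of non-orientation-preserving $0$-cell contribute $+1$ to the difference of invariant dimensions, as your eigenvalue computation and the paper's explicit matrices confirm, but the distinction matters when you group signed contributions by boundary component: a boundary component of $\mathcal O^2$ is a full $1$-orbifold precisely when it is a free arc capped by two such $\mathbb{Z}/2$ points, and the net $+1$ per full $1$-orbifold comes from the adjacent silvered arc (one more vertex than edge), not from corner reflectors as such. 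Your acknowledgment that the bookkeeping is the hard part is well placed; the paper's identity $\mathsf{f}(\partial\mathcal O^2)=\mathsf{nops}_0(K)-\mathsf{nops}_1(K)$ is precisely that bookkeeping made exact, and it needs the clean dichotomy between silvered circles (contributing $0$) and silvered arcs with $\mathbb{Z}/2$ endpoints (each contributing $+1$) that your description blurs.
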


\begin{Definition}
 A \emph{full} 1-orbifold is  the quotient of the real line by the group generated by two different reflections.

\end{Definition}

A full 1-orbifold is closed and non-orientable. Its underlying space is an interval and the isotropy group
of the endpoints of the interval is the cyclic group with two elements (generated by a reflection on the line).
It is the quotient of the circle by a non-orientable involution.

\begin{Corollary}
\label{Corollary:projectiveNO}
Let $\mathcal O^2$ be a compact non-orientable 2-orbifold,
with negative Euler characteristic and a convex projective structure.
\begin{enumerate}[(i)]
	\item For the orientable embedding, a neighborhood of the character of its projective holonomy  in
	$ X(\mathcal O^2, \mathrm{SL}_4(\mathbb R))$ is
	homeomorphic to
	$$
	\mathbb R^{p+b}\times\mathrm{Cone}( S^{t-1}\times S^{t-1}).
	$$
	\item For the type preserving embedding,  a neighborhood of the character of its projective holonomy  in
	$ X(\mathcal O^2, \mathrm{SL}_4^{\pm}(\mathbb R))$ is
	homeomorphic to
	$$
	\mathbb R^{p+b}\times\mathrm{Cone}( S^{t-\mathsf{f}-1}\times S^{t-\mathsf{f}-1})/\!\sim.
	$$
\end{enumerate}	
Here
$t=\dim ( \mathrm{Teich}(\mathcal O^2) )$, $p=\dim ( \mathrm{Proj}_{\mathrm{cvx}}(\mathcal O^2))$, 
 $b=\dim ( H^1(\mathcal{O}^2,\mathbb R))$, and 
 $\mathsf{f}=\mathsf{f}(\partial \mathcal O^2)  $.
\end{Corollary}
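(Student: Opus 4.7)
The plan is to apply Theorem~\ref{Theorem:nonorientable} with $n=3$ directly to the projective holonomy $\rho\colon\pi_1(\mathcal O^2)\to\mathrm{SL}^\pm_3(\mathbb R)$, then translate the cohomological dimensions $d_{\mathrm{oe}}$ and $d_{\mathrm{tp}}$ appearing in the theorem into the geometric dimensions $t$, $\mathsf{f}$ via Lemma~\ref{lemma:dims}.

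First I would verify the hypotheses of Theorem~\ref{Theorem:nonorientable}. The representation $\rho$ is type preserving in the sense of Definition~\ref{Definitiontp}: the convex projective holonomy of $\mathcal O^2$ arises as a deformation of the hyperbolic holonomy in $\mathrm{O}_0(2,1)\subset \mathrm{SL}^\pm_3(\mathbb R)$, where by Remark~\ref{Remark:lift} such a lift exists and $\det\rho(\gamma)=\alpha(\gamma)$ by construction. For the irreducibility hypothesis, the restriction of $\rho$ to $\pi_1$ of the orientation double cover $\widetilde{\mathcal O^2}$ is the holonomy of the lifted convex projective structure on the orientable orbifold $\widetilde{\mathcal O^2}$, which is $\mathbb C$-irreducible by the remark following Remark~\ref{Remark:lift} in the orientable case.

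Once the hypotheses are verified, the two cases follow by inspection. For part~(i), the orientable embedding lands in $\mathrm{SL}_4(\mathbb R)$ with $n+1=4$ \emph{even}, so Theorem~\ref{Theorem:nonorientable}(i) gives a neighborhood homeomorphic to
$$
\mathbb R^p\times \mathbb R^b\times \mathrm{Cone}\bigl(S^{d_{\mathrm{oe}}-1}\times S^{d_{\mathrm{oe}}-1}\bigr),
$$
and Lemma~\ref{lemma:dims}(b) replaces $d_{\mathrm{oe}}$ by $t$. For part~(ii), Theorem~\ref{Theorem:nonorientable}(ii) yields
$$
\mathbb R^p\times \mathbb R^b\times \mathrm{Cone}\bigl(S^{d_{\mathrm{tp}}-1}\times S^{d_{\mathrm{tp}}-1}\bigr)/\!\sim,
$$
and Lemma~\ref{lemma:dims}(c) replaces $d_{\mathrm{tp}}$ by $t-\mathsf{f}$.

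Finally, I would match the Euclidean factors. The parameter $p$ in Theorem~\ref{Theorem:nonorientable} is $\dim H^1(\Gamma,\mathfrak{sl}_3(\mathbb R))$, and since $\mathrm{Proj}_{\mathrm{cvx}}(\mathcal O^2)$ is an open subset of a smooth component of the character variety $X(\mathcal O^2,\mathrm{SL}^\pm_3(\mathbb R))$ containing $\chi_\rho$, its dimension coincides with $p$; the factor $\mathbb R^b$ with $b=\dim H^1(\mathcal O^2,\mathbb R)$ is given by definition. There is no real obstacle here: the corollary is a direct transcription of Theorem~\ref{Theorem:nonorientable} through Lemma~\ref{lemma:dims}; the only step that briefly requires justification is $\mathbb C$-irreducibility on the orientation cover, which is standard for convex projective holonomies.
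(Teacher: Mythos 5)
Your proposal is correct and takes essentially the same approach the paper intends: the corollary is a direct transcription of Theorem~\ref{Theorem:nonorientable} with $n=3$ (so $n+1=4$ even) through Lemma~\ref{lemma:dims}, after checking that the holonomy is type preserving (Remark~\ref{Remark:lift}) and $\mathbb C$-irreducible on the orientation cover (the lifted structure is convex projective on an orientable 2-orbifold). Your identification of the Euclidean factors $\mathbb R^p$ and $\mathbb R^b$ is also the intended one.
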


\begin{proof}[Proof of Lemma~\ref{lemma:dims}]

Assertion (a) uses the fact that the cohomology of
$\mathcal O^2$ is
the invariant cohomology of its orientation
covering, which is also a projective orbifold.
Both $\rho$ and $\rho\otimes\alpha$ restrict to
the projective holonomy of the orientation
covering, for which the 0th and 2nd cohomology
group with coefficients in $\mathbb R^3_\rho$
vanish, as discussed in
Section~\ref{Section:Decomposition}.

To prove the other statements,
we use Proposition~3.2 of \cite{PortiDim}. Namely, if $K$ is a finite cell decomposition of $\mathcal O^2$,
so that the isotropy group $\mathrm{Stab}(\tilde e)$ is constant on each cell of $K$
(here $\tilde e$ is a lift of $e$ to the universal covering),
then
\begin{equation}
 \label{eqn:twistedEuler}
\sum_i (-1)^i\dim H^i(\mathcal O^2, V)= \sum_{e\textrm{ cell of }K} (-1)^{\dim e} \dim V^{\mathrm{Stab}(\tilde e) }.
 \end{equation}

Assertion (b) holds true for
$\rho\colon\pi_1(\mathcal{O}^2)\to \mathrm{O}_0(2,1)$
the holonomy   of a hyperbolic structure,
because by Lemma~\ref{lemma:equiv}
 $\mathrm{Ad}\rho$
is equivalent to $\rho\otimes \alpha$.  Then the equality of dimensions  holds true in the whole connected component of
the variety of representations by (a) and
\eqref{eqn:twistedEuler}, because the conjugacy
class of the image of a
finite order element does not change along
a connected component.

To prove (c),  we shall compute
the difference  $d_{\mathrm{oe}}-d_{\mathrm{tp}}$.
We aim to apply Assertion~(a) and
Equality~\eqref{eqn:twistedEuler}, so we
compute
 difference
\begin{equation*}
 \dim \big( ( \mathbb R^3_\rho   )^{\mathrm{Stab}(\tilde e)} \big)- \dim  \big( ( \mathbb R^3_{\rho\otimes\alpha } )  ^{\mathrm{Stab}(\tilde e)} \big).
\end{equation*}
for each cell $e$ of $K$.
 When the isotropy group of $e$ preserves
 the orientation, then $\alpha$ is trivial on $\mathrm{Stab}(\tilde e)$
 and this difference  vanishes.
When the stabilizer of a cell is non-orientable, it is a group generated by reflections, either  $\mathbb{Z}/2\mathbb{Z}$ or a dihedral
group $D_{2n}$.
For $\mathbb{Z}/2\mathbb{Z}$ the images are generated by:
$$
\rho( \mathbb{Z}/2\mathbb{Z} )= \langle  \left(\begin{smallmatrix} -1 & & \\ & 1 & \\ & & 1 \end{smallmatrix}\right)   \rangle
\quad \textrm{ and }\quad
(\rho\otimes\alpha) ( \mathbb{Z}/2\mathbb{Z} )= \langle    \left(\begin{smallmatrix} 1 & & \\ & -1 & \\ & & -1 \end{smallmatrix}\right)
\rangle.
$$
Hence the dimension of the invariant spaces are
$$
 \dim(\mathbb R^3_{\rho})^{\mathbb{Z}/2\mathbb{Z}}= 2
\quad \textrm{ and }\quad
 \dim(\mathbb R^3_{\rho\otimes\alpha})^{\mathbb{Z}/2\mathbb{Z}}= 1.
$$
For the dihedral group $D_{2n}$, the generators of the image are
\begin{align*}
 \rho( D_{2n} )&= \langle  \left(\begin{smallmatrix} -1 & & \\ & 1 & \\ & & 1 \end{smallmatrix}\right) ,
 \left(\begin{smallmatrix} \cos \alpha &  -\sin\alpha & \\ \sin\alpha & \cos \alpha & \\ & & 1 \end{smallmatrix}\right)
\rangle
\\
(\rho\otimes\alpha) (  D_{2n} )&= \langle    \left(\begin{smallmatrix} 1 & & \\ & -1 & \\ & & -1 \end{smallmatrix}\right),
    \left(\begin{smallmatrix} \cos \alpha &  -\sin\alpha & \\ \sin\alpha & \cos \alpha & \\ & & 1 \end{smallmatrix}\right)
\rangle
\end{align*}
and the dimension of the invariant spaces are
$$
 \dim(\mathbb R^3_{\rho})^{D_{2n}}= 1
\quad \textrm{ and }\quad
 \dim(\mathbb R^3_{\rho\otimes\alpha})^{D_{2n}}= 0.
$$
Summarizing:
\begin{equation}
 \label{eqn:dimsnonor}
 \dim  ( \mathbb R^3_\rho   )^{\mathrm{Stab}(\tilde e)}- \dim  ( \mathbb R^3_{\rho\otimes\alpha } )  ^{\mathrm{Stab}(\tilde e)}
 =\begin{cases}
0 & \textrm{ if }\mathrm{Stab}(\tilde e)\textrm{ is or.~pres.} \\
+1 & \textrm{ if }\mathrm{Stab}(\tilde e)\textrm{ is not o.~p.}
  \end{cases}
\end{equation}
Let $\mathsf{nops}_i(K)$ denote the number of $i$-cells of $K$ with non-orientation preserving stabilizer. Notice that all cells
with  non-orientation preserving stabilizer lie in the boundary of the underlying space of the orbifold $\partial|\mathcal O^2| $ (a subset of
 the boundary  of the orbifold $\partial\mathcal O^2 $),
that is a union of circles. By looking at the contribution of the full orbifolds
we may deduce:
\begin{equation}
 \label{eqn:fulls}
\mathsf{f}(\partial \mathcal O^2) =\mathsf{nops}_0(K)-\mathsf{nops}_1(K).
\end{equation}
Finally, using Equality~(a) and Equalities \eqref{eqn:twistedEuler}, \eqref{eqn:dimsnonor},
and \eqref{eqn:fulls}:
\begin{multline*}
 d_{\mathrm{oe}}-d_{\mathrm{tp}}=
 \dim H^1(\mathcal O^2,
 \mathbb R^n_{\rho\otimes \alpha})-
 \dim H^1(\mathcal O^2,\mathbb R^n_{\rho})
\\
 =\mathsf{nops}_0(K)-\mathsf{nops}_1(K)=
\mathsf{f}(\partial \mathcal O^2).
\end{multline*}
This finishes the proof of the lemma.
\end{proof}

\begin{Remark}
The difference of dimensions also occurs when we consider both possible
 embedding of
 $\mathrm{Isom}(\mathbb H^2)$ in either $\mathrm{Isom}^+(\mathbb H^3)$
 (orientation embedding)
 or $\mathrm{Isom}(\mathbb H^3)$ (type preserving), according to whether a non-orientable
 isometry of $\mathbb H^3$ extends to the unique orientable or non-orientable isometry of $\mathbb H^3$.
For instance, consider a Fuchsian group $\Gamma<\mathrm{Isom}(\mathbb H^2)$
generated by a rotation of finite order and a reflection along a geodesic,
$\mathcal O^2_4$ in
 Figure~\ref{Figure:Exs}.
Its deformation space as Fuchsian group has dimension one,
the parameter being the distance between the fixed point of the rotation to the fixed line of the reflection,
both in $\mathbb H^2$:
\begin{enumerate}[(i)]
 \item The type preserving embedding $\Gamma_{\mathrm{tp}}$
is a group generated by a rotation and a reflection on a
plane in $\mathbb H^3$. The deformation space of $\Gamma_{\mathrm{tp}}$
has again dimension one, as the relative position between a plane and a line
in generic position is determined by their distance.
\item The orientation embedding $\Gamma_{\mathrm{oe}}$
is a group generated by two rotations along two axes, one of them of order two (the reflection in $\mathbb H^2$
extends to a rotation of order two in $\mathbb H^3$).
The relative position between two lines in $\mathbb H^3$
is determined by two parameters, hence the deformation space as quasi-Fuchsian group has dimension 2.
\end{enumerate}
Notice that in this case $\mathsf{f}(\partial \mathcal O^2) =1 $,
 it is Example~\ref{Example:On} below ($\mathcal O^2_4$ in
 Figure~\ref{Figure:Exs}).
\end{Remark}

\subsection{Examples}
\label{section:Examples}


In this section we describe the deformation spaces for four
 2-orbifolds pictured in Figure~\ref{Figure:Exs}.
 They have small deformation spaces and the topology of the singularities is easy to describe.

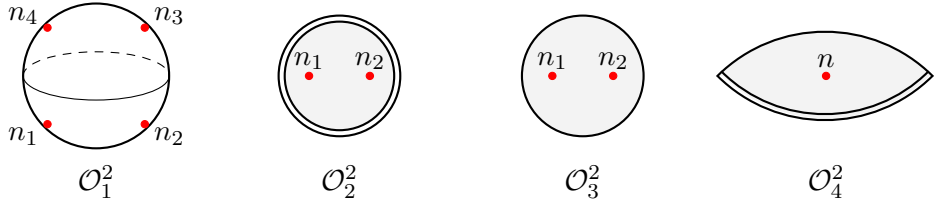
\begin{figure}[ht]
\begin{center}
 \begin{tikzpicture}[line join = round, line cap = round, scale=.8]
 \begin{scope}[shift={(0,0)}]
 \draw[thick] (0,1) circle(1.2);
     \draw[red, fill=red] (-.8,0.2) circle(0.06);
     \draw[red, fill=red] (.8,0.2) circle(0.06);
     \draw[red, fill=red] (.8,1.8) circle(0.06);
     \draw[red, fill=red] (-.8,1.8) circle(0.06);
      \draw (-1.2,1) arc[x radius = 1.2, y radius = .4, start angle =180, end angle= 360 ];
      \draw[dashed] (-1.2,1) arc[x radius = 1.2, y radius = .4, start angle =180, end angle= 0 ];
     \draw (-6/5,0) node{$n_1 $};
     \draw (6/5,0) node{$n_2 $};
     \draw (6/5,2) node{$n_3 $};
     \draw (-6/5,2) node{$n_4 $};
     \draw (0,-.75) node{$\mathcal O^2_1$};
   \end{scope}
       \begin{scope}[shift={(4,0)}]
        \fill[gray!30!white, opacity=.3] (0,1) circle(.9);
        \draw[thick] (0,1) circle(1);
         \draw[thick] (0,1) circle(.9);
      \draw[red, fill=red] (-.5,1) circle(0.06);
      \draw[red, fill=red] (.5,1) circle(0.06);
    \draw (-.5,1.25) node{$n_1 $};
     \draw (.5,1.25) node{$n_2 $};
     \draw (0,-.75) node{$\mathcal O^2_2$};
   \end{scope}
       \begin{scope}[shift={(8,0)}]
        \fill[gray!30!white, opacity=.3] (0,1) circle(1);
        \draw[thick] (0,1) circle(1);
      \draw[red, fill=red] (-.5,1) circle(0.06);
      \draw[red, fill=red] (.5,1) circle(0.06);
    \draw (-.5,1.25) node{$n_1 $};
     \draw (.5,1.25) node{$n_2 $};
     \draw (0,-.75) node{$\mathcal O^2_3$ };
   \end{scope}
       \begin{scope}[shift={(12,0)}]
\fill[gray!30!white, opacity=.3] (1.75,1)  arc [start angle=45, end angle=135, x radius=2.5cm, y radius =2.5cm] ;
\fill[gray!30!white, opacity=.3]  (1.7,1) arc [start angle=-45, end angle=-135, x radius=2.4cm, y radius =2.4cm] ;
\draw[thick] (1.75,1) arc [start angle=45, end angle=135, x radius=2.5cm, y radius =2.5cm] ;
\draw[thick] (1.75,1) arc [start angle=-45, end angle=-135, x radius=2.5cm, y radius =2.5cm] ;
 \draw[thick] (1.678,1.07) arc [start angle=-45, end angle=-135, x radius=2.4cm, y radius =2.4cm] ;
      \draw[red, fill=red] (0,1) circle(0.06);
    \draw (0,1.25) node{$n $};
     \draw (0,-.75) node{$\mathcal O^2_4$ };
   \end{scope}
\end{tikzpicture}
\end{center}
\caption{The four examples in Section~\ref{section:Examples}. The orbifold
$\mathcal O^2_1$ is the orientation covering of
$\mathcal O^2_2$, and
$\mathcal O^2_3$, of
$\mathcal O^2_4$ (for a suitable choice of labels).}
\label{Figure:Exs}
\end{figure}

 \begin{Example} Consider a sphere with 4 cone points
  $\mathcal O^2_1=S^2(n_1,n_2,n_3,n_4)$.
  Since $\mathcal O^2_1$ is hyperbolic, at
  least one of the $n_i$ is $\geq 2$.
  The dimension of the space of convex projective structures is
  $p=\dim  (   \mathrm{Proj}_{\mathrm{cvx}}(\mathcal O^2_1))= 8-2 k $, where
  $0\leq k\leq 3$ is the number of $n_i$ equal to $2$.
  The dimension of the Teichm\"uller space is
  $d=\dim (\mathrm{Teich}(\mathcal O^2_1))=2$.

  As $\mathcal O^2_1$
  is orientable and closed, $X(\mathcal O^2_1,\mathrm{SL}_4(\mathbb R))$
  has a neighborhood homeomorphic to
 $$
\mathbb R^{8-2k}\times  \mathrm{Cone}( \mathrm{UT}( S^{1}))=
\mathbb R^{8-2k}\times    \mathrm{Cone}( S^{1}  \sqcup S^{1})
   $$
Namely,    the neighborhood is homeomorphic to the union to 2 manifolds of dimension $10-2k$
that intersect  along a manifold of codimension 2.
 \end{Example}

   \begin{Example}
   Consider a disc with mirror boundary and two cone points of
 order $n_1$ and $n_2$, with at most one of the $n_i$ equal to 2. This is a non-orientable closed 2-orbifold,
 that we denote by  $\mathcal O^2_2=D^2(n_1,n_2;\emptyset)$. Its orientation covering
 is $S^2(n_1, n_1, n_2, n_2)$. Now the dimension of the space of
 convex projective structures is
   $p=\dim  (   \mathrm{Proj}_{\mathrm{cvx}}(\mathcal O^2_2))= 4-2 k $, where
  $0\leq k\leq 1$ is the number of $n_i$ equal to $2$. The dimension of the
  Teichm\"uller space is
  $d=\dim (\mathrm{Teich}(\mathcal O^2_2))=1$.

  As $\mathcal O^2_2$ is non-orientable, we distinguish the orientable embedding
  from the type preserving embedding:
  \begin{enumerate}[(i)]
   \item Consider the orientable embedding. A neighborhood in
   $X(\mathcal O^2_2, \mathrm{SL}_4(\mathbb R))$ is homeomorphic to
  $$
  \mathbb R^{4-2k}\times  \mathrm{Cone}( S^{0}\times S^0).
  $$
Here  $S^0\times S^0$ is the set with cardinality 4.
Namely,    the neighborhood is homeomorphic to the union to 2 manifolds of dimension $5-2k$ that intersect along a manifold of codimension one.
  \item Consider the type preserving embedding. A neighborhood  of the character  in
  $X(\mathcal O^2, \mathrm{SL}_{\pm}(4,\mathbb R))$ is homeomorphic to
  $$
  \mathbb R^{4-2k}\times  \mathrm{Cone}( S^{0}\times S^0)/\!\!\sim\,\, =
    \mathbb R^{4-2k}\times  \mathrm{Cone}(S^{0})\cong  \mathbb R^{5-2k}.
  $$
  Thus the neighborhood is topologically non-singular.

  \end{enumerate}
   \end{Example}

   \begin{Example} Next consider a disc with
   two cone points of order $n_1$ and $n_2$, with $(n_1,n_2)\neq (2,2)$,
        $\mathcal O^2_3=D^2(n_1,n_2)$. Even if the underlying space and the number of
        cone points is the same as the previous one,
        this orbifold is orientable and has boundary.
        The deformation space however looks like the previous example (orientable embedding):
        $\dim  (   \mathrm{Proj}_{\mathrm{cvx}}(\mathcal O^2_3))= 4-2 k $, where
  $0\leq k\leq 1$ is the number of $n_i$ equal to $2$ and
  $\dim (\mathrm{Teich}(\mathcal O^2_3))=1$.  As $\partial \mathcal O^2_3
  \neq\emptyset$,  $X(\mathcal O^2_3, \mathrm{SL}_4(\mathbb R))$ is again homeomorphic to
  $
  \mathbb R^{4-2k}\times  \mathrm{Cone}( S^{0}\times S^0)
   $.
   \end{Example}

   \begin{Example}
   \label{Example:On}
   Consider $\mathcal O^2_4$ a disc $D^2$ with one cone point of order $n\geq 3$
   and $\partial D^2$ is the union of a full orbifold and a mirror interval.
    This is a non-orientable orbifold with boundary, and its
    orientation cover is $D^2(n, n)$, the previous example.
    Now  $\dim  (   \mathrm{Proj}_{\mathrm{cvx}}(\mathcal O^2_4))= 2 $
     and
     $\dim (\mathrm{Teich}(\mathcal O^2_4))=1$.
       \begin{enumerate}[(i)]
   \item For the orientable embedding, a neighborhood in
   $X(\mathcal O^2_4, \mathrm{SL}_4(\mathbb R))$ is homeomorphic to
  $$
  \mathbb R^{2}\times  \mathrm{Cone}( S^{0}\times S^0)
  $$
  and the neighborhood is homeomorphic to the union of two 3-manifolds
  that intersect along a surface.
  \item Consider the type preserving embedding. As $\mathsf{f}(\partial \mathcal O^2_4)= 1$,
  $$\dim (\mathrm{Teich}(\mathcal O^2_4))-
  \mathsf{f}(\partial \mathcal O^2_4)=0.$$
  Thus all deformations of representations of $\pi_1(\mathcal O^2_4)$
  in $\mathrm{SL}^{\pm}_4(\mathbb R)$
  (after the type preserving embedding)
  remain in
  $\mathrm{SL}^{\pm}_3(\mathbb R)$.
    \end{enumerate}

   \end{Example}

\section{Projective deformations of hyperbolic 3-orbifolds}
\label{Section:projdef3}

A hyperbolic 3-orbifold  has a natural
projective structure, by using the projective model of hyperbolic geometry,
and there is no known general criterion to determine
the deformation space of those projective structures,
cf.~\cite{CLT1, CLT2, HeusenerPorti11}. 
Motivated by this question, the following definition 
was introduced in a joint paper with Heusener \cite{HeusenerPorti11}.

%

\begin{Definition}
\label{Definition:infinitesimal}
A compact hyperbolic 3-orbifold $\mathcal O^3$ is projectively infinitesimally rigid
with respect to the boundary if
the inclusion induces an injection
$$
0\to H^1(\mathcal O^3,\mathfrak{sl}_4(
\mathbb R)) \to
H^1(\partial \mathcal O^3,\mathfrak{sl}_4(
\mathbb R)).
$$
\end{Definition}

For finite volume hyperbolic 3-manifolds, Ballas,  Danciger,  and Lee
\cite[Theorem~3.2]{BDL}
proved that infinitesimal rigidity with respect to the boundary implies that the space of
$\mathrm{PSL}_4(\mathbb R)$ characters is smooth.
Here we consider compact 3-orbifolds with hyperbolic interior, possibly of infinite volume, namely orientable 3-orbifolds of finite type.

\begin{Example}
 A product $\mathcal O^3=\mathcal O^2\times [0,1]$ it is infinitesimally
 projectively rigid for any hyperbolic structure.
\end{Example}

\begin{Definition}
We say that one end of a hyperbolic 3-orbifold is \emph{Fuchsian} if its holonomy is a Fuchsian group: namely the end corresponding to a totally geodesic
boundary component.
\end{Definition}

In this definition, finite volume ends (e.g.~corresponding to
a Euclidean boundary component) are not considered Fuchsian.

This section is devoted to the proof of
Theorem~\ref{Theorem:3dimIntro} in the introduction, that we restate for convenience:

\begin{Theorem}[Theorem~\ref{Theorem:3dimIntro}]
\label{Theorem:3dim}
Let $\mathcal O^3$ be a compact orientable orbifold, with $\partial \mathcal O^3\neq \emptyset$ and hyperbolic interior
$\mathrm{int}( \mathcal O^3 )$, so  that it is not elementary, nor Fuchsian.
Assume that $\mathcal O^3$ is infinitesimally projectively rigid
with respect to the boundary.
Then
 the character of the
hyperbolic holonomy is a smooth point of
$X(\mathcal O^3, \mathrm{SL}_4(\mathbb R))$
if,  and only if,
all ends of
$\mathrm{int}( \mathcal O^3 )$ are either non-Fuchsian or turnovers. Furthermore, the singularity is quadratic.
 \end{Theorem}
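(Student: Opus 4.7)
The plan is to reduce the three-dimensional problem to the two-dimensional statement of Theorem~\ref{Theorem:orientable} by exploiting the infinitesimal rigidity hypothesis.

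First I would set up the local analytic model at $\rho$. Because $\mathcal{O}^3$ is non-elementary and non-Fuchsian, the composition $\rho\colon\pi_1(\mathcal{O}^3)\to\mathrm{SO}(3,1)\hookrightarrow\mathrm{SL}_4(\mathbb R)$ is $\mathbb C$-irreducible on $\mathbb R^4$, so Theorem~\ref{Theorem:Slice} identifies the germ of $X(\mathcal{O}^3,\mathrm{SL}_4(\mathbb R))$ at $[\rho]$ with an analytic germ in $H^1(\pi_1(\mathcal{O}^3),\mathfrak{sl}_4(\mathbb R))$. The obstruction calculus of Subsection~\ref{Section:tangent} controls the singularities via the cup-bracket pairing
\[
[\cdot\cup\cdot]\colon H^1(\mathcal{O}^3,\mathfrak{sl}_4(\mathbb R))\times H^1(\mathcal{O}^3,\mathfrak{sl}_4(\mathbb R))\to H^2(\mathcal{O}^3,\mathfrak{sl}_4(\mathbb R));
\]
the quadratic character of any singularity will fall out as soon as we show this is the only obstruction.

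Next I would analyse the restriction to the boundary. Infinitesimal projective rigidity provides the injection
$H^1(\mathcal{O}^3,\mathfrak{sl}_4(\mathbb R))\hookrightarrow H^1(\partial\mathcal{O}^3,\mathfrak{sl}_4(\mathbb R))=\bigoplus_i H^1(F_i,\mathfrak{sl}_4(\mathbb R))$, where the $F_i$ are the boundary components. For each $F_i$ I would describe the germ of $X(F_i,\mathrm{SL}_4(\mathbb R))$ at $\rho|_{\pi_1(F_i)}$. If $F_i$ is a non-totally-geodesic convex cocompact boundary, then $\rho|_{\pi_1(F_i)}$ remains $\mathbb C$-irreducible in $\mathrm{SL}_4(\mathbb R)$, so Proposition~\ref{Proposition:Goldman} yields smoothness. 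If $F_i$ is a cusp with Euclidean boundary 2-orbifold, the peripheral subgroup is virtually abelian and the local character variety can be described explicitly, giving smoothness again (this parallels the corresponding step in Ballas--Danciger--Lee). If $F_i$ is Fuchsian, then $\rho|_{\pi_1(F_i)}$ factors through $\mathrm{SO}(2,1)\hookrightarrow\mathrm{SL}_3(\mathbb R)\hookrightarrow\mathrm{SL}_4(\mathbb R)$, so Theorem~\ref{Theorem:orientable} with $n=3$ applies directly: the germ is smooth if and only if $d=0$, which by \eqref{eqn:DimTeich} occurs exactly when $F_i$ is a turnover.

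The third step is to globalise. Using naturality of the cup product together with Poincar\'e--Lefschetz duality for the compact orientable 3-orbifold, the image of $H^1(\mathcal{O}^3,\mathfrak{sl}_4(\mathbb R))$ in $\bigoplus_i H^1(F_i,\mathfrak{sl}_4(\mathbb R))$ is a Lagrangian subspace for the sum of the boundary cup-product pairings, and the quadratic obstruction $[z\cup z]$ for a class on the 3-orbifold restricts to the sum of its boundary obstructions. Hence the germ of $X(\mathcal{O}^3,\mathrm{SL}_4(\mathbb R))$ at $[\rho]$ is analytically the pullback of the product of boundary germs along this Lagrangian inclusion. Smoothness of the pullback is therefore equivalent to smoothness at each boundary factor meeting the Lagrangian in expected codimension, which by the previous step is precisely the condition that every Fuchsian end be a turnover. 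The quadratic nature of any singularity is inherited from the explicit quadratic cones of Theorem~\ref{Theorem:orientable} and Lemma~\ref{Lemma:cupprodcut}.

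The hard part will be the global Lagrangian/pullback argument: I must verify that the image Lagrangian meets each non-trivial boundary cone in the expected codimension, so that the 3-orbifold germ is the genuine transverse pullback of boundary germs rather than a further degeneration. Using the $G_0$-module decomposition of Subsection~\ref{Section:Decomposition}, this reduces to checking that the pairing between the $\mathbf m_{\mathsf c}$ and $\mathbf m_{\mathsf r}$ projections of the Lagrangian is non-degenerate at every Fuchsian end, which I expect to extract from infinitesimal rigidity combined with the perfect-pairing properties of Remark~\ref{Remark:PDSkS}.
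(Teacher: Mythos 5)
Your first two steps follow the paper fairly closely: setting up the slice via Theorem~\ref{Theorem:Slice}, invoking the obstruction calculus, and treating boundary components case by case (non-Fuchsian via $\mathbb C$-irreducibility and Proposition~\ref{Proposition:Goldman}, Euclidean via Ballas--Danciger--Lee, Fuchsian via Theorem~\ref{Theorem:orientable}). The ``if'' direction (smoothness when all Fuchsian ends are turnovers) essentially works as you describe, modulo invoking Artin's approximation theorem to upgrade vanishing of formal obstructions to genuine smoothness, which the paper does and you should not omit.

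The genuine gap is in the ``only if'' direction. You want to conclude that a singular boundary factor forces the Lagrangian pullback to be singular, and you identify the sticking point as verifying an ``expected codimension'' condition, but the logic is backwards: a Lagrangian subspace passing through the vertex of a singular quadratic cone can cut out a \emph{smooth} germ (for instance the line $\{x=0\}$ inside the cone $\{xy=0\}$ in $\mathbb R^2$). So transversality of the Lagrangian against the boundary cone would not yield singularity of the pullback; quite the opposite, it is the \emph{failure} of the Lagrangian to avoid the singular stratum that has to be established, and your framework gives no handle on this. The paper sidesteps the issue entirely with a different argument: it first perturbs $\rho$ inside $\hom(\pi_1(\mathcal O^3),\mathrm{SO}(3,1))$ to a nearby representation $\rho'$ none of whose non-turnover peripheral images are Fuchsian (Proposition~\ref{Proposition:NFD}, using the complex-analytic structure on $X(\mathcal O^3,\mathrm{PSL}_2(\mathbb C))$ and a Lagrangian argument over $\mathbb C$), and then computes via Lemma~\ref{Lemma:dimZTF} that $\dim T^{\mathrm{Zar}}_{[\rho']}X(\mathcal O^3,\mathrm{SL}_4(\mathbb R)) < \dim T^{\mathrm{Zar}}_{[\rho]}X(\mathcal O^3,\mathrm{SL}_4(\mathbb R))$ whenever a Fuchsian non-turnover end exists. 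A jump in the Zariski tangent space dimension within an irreducible component proves $[\rho]$ is singular. This dimension-comparison mechanism is the missing idea in your proposal. Your pullback picture is, however, the right tool for the quadratic-singularity claim, and the paper does use a variant of it (a jet-lifting argument comparing the slice with the product of boundary representation varieties) in Proposition~\ref{Proposition:Quadratic}.
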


The variety of characters of
$X(\mathcal O^3, \mathrm{SL}_4(\mathbb R))$ 
has a natural analytic structure
at the character of the holonomy,
but a priori it may be non-reduced, it may contain points or subvarieties with multiplicity higher than one. We follow the following convention:

\begin{Remark}
A point with higher multiplicity is considered a singular point. We use the convention that a point is singular
when the dimension of the Zariski tangent space is higher than the dimension of the (any) component containing it.
\end{Remark}

In particular, a point that lies in more than one irreducible component is singular.

\begin{Example} Consider again a product $\mathcal O^3=\mathcal O^2\times [0,1]$.
When $\mathcal O^2$ is Fuchsian,
its holonomy is contained in
$\mathrm{SO}(2,1)\subset\mathrm{SL}_3(\mathbb R)$, hence reducible as representation in
$\mathrm{SL}_4(\mathbb R)$.  By Theorem~\ref{Theorem:orientable},  the space of characters in
 $\mathrm{SL}_4(\mathbb R)$ is singular. But when  $ \mathcal O^2  $  is
 non-Fuchsian it is smooth (by Lemma~\ref{Lemma:03irr} below
 its  holonomy in
$\mathrm{SL}_4(\mathbb R)$ is $\mathbb{C}$-irreducible and
Proposition~\ref{Proposition:Goldman} applies).
\end{Example}

The proof of Theorem~\ref{Theorem:3dim} is divided into three statements:
Propositions~\ref{Proposition:Smoothness}, \ref{Proposition:FuchsianSingular}, and~\ref{Proposition:Quadratic}.

Proposition~\ref{Proposition:Smoothness}
shows the smoothness of the variety of characters when there are
no Fuchsian ends other than a turnover. It  is  proved in two subsections.
In Subsection~\ref{Subsection:hol} we establish some preliminary results on the holonomy of a hyperbolic 3-orbifold,
and in Subsection~\ref{subsection:Smoothness} we complete the proof,
relying on naturality of Goldman's obstructions to integrability and
smoothness of the character varieties of the boundary components.

In Proposition~\ref{Proposition:FuchsianSingular} we show that,
when there is a Fuchsian end other than a turnover, the variety
of characters is singular. This is also done in two subsections.
The first step consists in deforming the holonomy representation
to another representation
in $\mathrm{Isom}^+(\mathbb H^3)$ with no Fuchsian ends. This is done in
Subsection~\ref{Subsection:Deforming}
and then in Subsection~\ref{Subsection:ZT}
we show that the dimension of the Zariski tangent space strictly
decreases under this deformation, establishing the singularity.

Finally in Subsection~\ref{Section:Quadratic} we establish
that the singularity is quadratic, relying again on the boundary,
where the fact that it is quadratic  is due to Goldman
and Millson \cite{GoldmanMillson}.

 \subsection{The projective holonomy of a hyperbolic 3-orbifold}
\label{Subsection:hol}
 In this subsection we discuss some preliminary results on the hyperbolic holonomy
 representation of a three orbifold
 $\pi_1(\mathcal O^3)\to \mathrm{SO}_0(3,1)$ composed with
 the inclusion
 $\mathrm{SO}_0(3,1)\subset
 \mathrm{SL}_4(\mathbb R)$.


\begin{Lemma}
\label{Lemma:03irr}
 Let $\mathcal O^3$ be a compact orientable orbifold, with  hyperbolic interior. If it is not elementary nor
Fuchsian, then its holonomy representation
 $\pi_1(\mathcal O^3)\to \mathrm{SO}_0(3,1)\subset
 \mathrm{SL}_4(\mathbb R)$ is $\mathbb C$-irreducible.
\end{Lemma}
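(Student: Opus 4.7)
My plan is to establish $\mathbb C$-irreducibility by (a) showing that $\Gamma:=\rho(\pi_1(\mathcal O^3))$ is Zariski dense in the real algebraic group $\mathrm{SO}_0(3,1)$, and (b) invoking the absolute irreducibility of the standard $4$-dimensional representation of $\mathrm{SO}_0(3,1)$, which remains irreducible after extension of scalars to $\mathbb C^4$. For (a), I would rely on the classification of proper Zariski-closed subgroups of the rank-one simple Lie group $\mathrm{SO}_0(3,1)$: up to conjugation these sit inside the stabilizer of a point of $\overline{\mathbb H^3}$ (giving the compact $\mathrm{SO}(3)$ or a parabolic subgroup), a geodesic of $\mathbb H^3$ (giving $\mathrm{SO}(2)\times\mathrm{SO}_0(1,1)$), or a totally geodesic plane (giving $\mathrm{SO}_0(2,1)$, possibly extended by an orientation-reversing involution of the plane). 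Each possibility would force $\Gamma$ to be elementary or Fuchsian, contrary to the hypothesis; hence the Zariski closure of $\Gamma$ in $\mathrm{SO}_0(3,1)$ is everything.

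For (b), I would use the standard isogeny
$$\mathrm{SO}(4,\mathbb C)\cong\bigl(\mathrm{SL}_2(\mathbb C)\times\mathrm{SL}_2(\mathbb C)\bigr)/\{\pm I\},$$
under which the standard representation on $\mathbb C^4$ becomes the outer tensor product $V_1\boxtimes V_2$ of the two standard $2$-dimensional representations, and hence is irreducible. Since $\mathrm{SO}_0(3,1)$ is a real form of $\mathrm{SO}(4,\mathbb C)$ and Zariski dense in it as a complex algebraic group, $\mathrm{SO}_0(3,1)$ also acts irreducibly on $\mathbb C^4$; by the Zariski density of $\Gamma$ in $\mathrm{SO}_0(3,1)$, so does $\Gamma$.

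As an alternative that avoids the representation theory of $\mathrm{SO}(4,\mathbb C)$, one can argue directly. Assume a proper nonzero $\Gamma$-invariant subspace $W\subsetneq\mathbb C^4$ exists. The subspaces $W\cap\overline W$ and $W+\overline W$ are $\Gamma$-invariant and stable under complex conjugation, hence are complexifications of real $\Gamma$-invariant subspaces of $\mathbb R^4$; a signature analysis of the Lorentz form shows that every proper nonzero such real subspace yields a $\Gamma$-invariant point of $\overline{\mathbb H^3}$, geodesic, or totally geodesic plane, all forbidden by hypothesis. Hence $\mathbb C^4=W\oplus\overline W$ with $\dim_{\mathbb C}W=2$, yielding a $\Gamma$-invariant complex structure $J$ on $\mathbb R^4$; Zariski density then forces $J$ to commute with all of $\mathrm{SO}_0(3,1)$, whose absolutely irreducible standard representation on $\mathbb R^4$ has commutant $\mathbb R\cdot I$, contradicting $J^2=-I$.

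The main obstacle I anticipate is the careful matching between proper Zariski-closed subgroups of $\mathrm{SO}_0(3,1)$ and the geometric notions of elementary and Fuchsian, especially for non-connected subgroups such as the $\mathbb Z/2\mathbb Z$-extension of $\mathrm{SO}_0(2,1)$ that reverses the orientation of the preserved plane. This still qualifies as preserving a totally geodesic plane, hence as Fuchsian in the sense used here, so no new case arises; but the bookkeeping must be done with some care.
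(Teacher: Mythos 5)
Your proposal is correct and follows essentially the same route as the paper: assume a proper invariant subspace of $\mathbb C^4$, observe (via absolute irreducibility of the standard $\mathrm{SO}_0(3,1)$-representation) that its stabilizer is a proper subgroup of $\mathrm{SO}_0(3,1)$, classify such subgroups as stabilizers of a point (finite or ideal), geodesic, or totally geodesic plane, and conclude elementary or Fuchsian against the hypothesis; the paper works directly with $H=\{g:gV=V\}$ and its identity component rather than phrasing it as Zariski density, but the content is the same. One small remark on your ``alternative'': the statement that the commutant of the standard $\mathrm{SO}_0(3,1)$-action on $\mathbb R^4$ is $\mathbb R\cdot I$ is precisely the absolute irreducibility you were trying to sidestep, so that variant reorganizes the linear algebra but still depends on the same fact.
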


\begin{proof}
The proof is by contradiction.
 Assume there is a proper subspace  $V\subset \mathbb C^4$ invariant by the holonomy representation, then the image of this holonomy
 is contained in the  closed subgroup $H=\{g\in \mathrm{SO}_0(3,1)\mid g V= V\}$, that has finitely many components.
 We may assume that the identity component  $H_0$ of $H$ is nontrivial,
 otherwise  $H$ would be finite and hence $\mathcal O^3$ elementary, yielding a contradiction.
Since $\mathbb C^4$ is irreducible as  $\mathrm{SO}_0(3,1)$-module,
  $H_0$
 is a proper connected subgroup of $\mathrm{SO}_0(3,1)$.
  Consider the action of this subgroup $H_0$ in hyperbolic space:
  as it is a proper subgroup, $H_0$  preserves either a totally geodesic plane,
a line,  or a point in either hyperbolic space or the ideal boundary. The whole subgroup $H$ also preserves the same
totally geodesic plane, line, or (finite or ideal) point, because $H_0$ is a finite index normal subgroup of $H$.
 When $H$ preserves a totally geodesic plane,
 $\mathcal O^3$ is Fuchsian, and in all other cases $\mathcal O^3$ is elementary. This contradiction concludes the proof.
\end{proof}

 Lemma~\ref{Lemma:03irr} applies not only to the orbifold
$\mathcal O^3$ of Theorem~\ref{Theorem:3dim} but also to its ends;
namely to  the orbifold $\mathbb H^3/\Gamma$, where
$\Gamma<\pi_1(\mathcal O^3)$ denotes the peripheral 
subgroup corresponding to an end of $\mathcal O^3$.

\begin{Corollary}
\label{Corollary:03irr}
  Let $\mathcal O^3$ be a compact orientable orbifold, with  hyperbolic interior. If it is not elementary nor
Fuchsian, then the invariant subspace of the Lie algebra is
trivial:
$$
\mathfrak{sl}_4(\mathbb R)^{ \pi_1(\mathcal O^3) }=0.
$$
In particular  $H^0(\mathcal O^3 , \mathfrak{sl}_4(\mathbb R))=0$.
\end{Corollary}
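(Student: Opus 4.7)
The plan is to derive the corollary directly from Lemma~\ref{Lemma:03irr} by running the same argument used in the proof of Lemma~\ref{Lemma:H0trivial}. First I would invoke Lemma~\ref{Lemma:03irr} to conclude that, under the stated hypotheses, the composition $\rho\colon \pi_1(\mathcal O^3) \to \mathrm{SO}_0(3,1) \hookrightarrow \mathrm{SL}_4(\mathbb R)$ is $\mathbb C$-irreducible. This places us exactly in the situation of Lemma~\ref{Lemma:H0trivial}, just with $G=\mathrm{SL}_4(\mathbb R)$ and $\Gamma=\pi_1(\mathcal O^3)$.

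Next, arguing by contradiction, I would suppose there exists $0\neq a\in \mathfrak{sl}_4(\mathbb R)^{\pi_1(\mathcal O^3)}$. Then $\mathrm{Ad}_{\rho(\gamma)}(a)=a$ for every $\gamma$, so the one-parameter subgroup $\{\exp(ta)\mid t\in\mathbb R\}$ centralizes the image of $\rho$. By Lemma~\ref{lemma:stabirr} applied to $\rho$, the centralizer of $\rho(\pi_1(\mathcal O^3))$ in $\mathrm{SL}_4(\mathbb R)$ equals the center $\mathcal Z(\mathrm{SL}_4(\mathbb R))=\{\pm I\}$, which is a finite group. Since a connected one-parameter subgroup contained in a finite group must be trivial, this forces $a=0$, a contradiction. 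Hence $\mathfrak{sl}_4(\mathbb R)^{\pi_1(\mathcal O^3)}=0$.

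Finally, for the cohomological statement, I would combine the identification $H^0(\pi_1(\mathcal O^3),V)\cong V^{\pi_1(\mathcal O^3)}$ from~\eqref{eqn:H0} with the fact that a compact orbifold with hyperbolic interior is very good and aspherical, so by the lemma of Subsection~\ref{Section:cohomology} (relating orbifold cohomology to group cohomology) one has
$$
H^0(\mathcal O^3,\mathfrak{sl}_4(\mathbb R))\cong H^0(\pi_1(\mathcal O^3),\mathfrak{sl}_4(\mathbb R))\cong \mathfrak{sl}_4(\mathbb R)^{\pi_1(\mathcal O^3)}=0.
$$

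There is no genuine obstacle here; the corollary is a formal consequence of Lemma~\ref{Lemma:03irr} together with the Schur-type argument already packaged in Lemma~\ref{Lemma:H0trivial}. The only point that deserves explicit mention is that the centralizer of a $\mathbb C$-irreducible representation in a matrix group of real type reduces to the (discrete) center, so that no nonzero element of the Lie algebra can be fixed.
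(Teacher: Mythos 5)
Your proof is correct, and it takes a genuinely shorter route than the one in the paper. The paper's own proof of this corollary begins the same way you do---by invoking Lemma~\ref{Lemma:03irr} to obtain $\mathbb C$-irreducibility and then passing to the one-parameter subgroup $\{\exp(tv)\}$ commuting with $\rho(\pi_1(\mathcal O^3))$---but then it applies the Lie--Kolchin theorem to produce a $\{\exp(tv)\}$-invariant complex line in $\mathbb C^4$, observes that commutativity forces this line (or the relevant eigenspace) to be $\rho$-invariant, and derives the contradiction with $\mathbb C$-irreducibility that way. You instead cite Lemma~\ref{lemma:stabirr} to identify the centralizer of $\rho(\pi_1(\mathcal O^3))$ with the finite center $\{\pm I\}$ of $\mathrm{SL}_4(\mathbb R)$, and conclude that a connected one-parameter subgroup contained in a finite group must be trivial. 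In effect you are pointing out that the corollary is an immediate consequence of Lemma~\ref{Lemma:03irr} together with the already-proved Lemma~\ref{Lemma:H0trivial} (applied with $G=\mathrm{SL}_4(\mathbb R)$, $\Gamma=\pi_1(\mathcal O^3)$), which is a cleaner and more economical deduction than the paper's appeal to Lie--Kolchin; the latter re-derives from scratch a fact that the paper had already packaged. Your treatment of the cohomological assertion via \eqref{eqn:H0} and the orbifold-to-group cohomology isomorphism also matches the paper. One small remark for completeness: the step ``a connected one-parameter subgroup contained in a finite group is trivial'' is the content you should make explicit---$\exp(0\cdot a)=I$, continuity of $t\mapsto\exp(ta)$, and discreteness of $\{\pm I\}$ force $\exp(ta)=I$ for all $t$, and differentiating at $t=0$ gives $a=0$.
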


\begin{proof}
 By contradiction, assume that there exists $0\neq
v\in\mathfrak{sl}_4(\mathbb R)^{ \pi_1(\mathcal O^3) }$.
Then the holonomy of $\pi_1(\mathcal O^3)$ commutes with the one
parameter real group $\{\exp(t v)\mid t\in\mathbb R\}$.
By Lie-Kolchin theorem, this real group has an invariant complex
line in
$\mathbb C^4$. This line is also preserved by
$\pi_1(\mathcal O^3)$, contradicting Lemma~\ref{Lemma:03irr}.
The last assertion of the corollary follows from the natural isomorphism
$H^0(\mathcal O^3 , \mathfrak{sl}_4(\mathbb R))
\cong
H^0(\pi_1(\mathcal O^3) , \mathfrak{sl}_4(\mathbb R))
\cong
\mathfrak{sl}_4(\mathbb R)^{ \pi_1(\mathcal O^3) }$.
\end{proof}

\begin{Remark}
 \label{Remark:02irr}
 As $\mathbb{C}^{n+1}$ is $\mathrm{SO}(n,1)$-irreducible,
 the very same argument proves
 the analog statement in any dimension.
 In particular,  for a non-elementary hyperbolic
 2-orbifold $\mathcal O^2$,   its (Fuchsian) holonomy representation
 $\pi_1(\mathcal O^2)\to \mathrm{SO}_0(3,1)\subset
 \mathrm{SL}_3(\mathbb R)$ is $\mathbb C$-irreducible
 and $H^0(\mathcal O^2 , \mathfrak{sl}_3(\mathbb R))=0$.
\end{Remark}

Next we require
some more preliminary results
on cohomology with twisted coefficients in the Lie algebra
$\mathfrak{sl}_4(
\mathbb R)$.
For an orbifold $\mathcal O^3$ satisfying the hypothesis of
Theorem~\ref{Theorem:3dim},
let
$$
\partial \mathcal O^3= \partial_1\mathcal O^3_1\cup \cdots
\cup  \partial_k\mathcal O^3
$$
denote the decomposition in
connected components, so that
$$
H^*(\partial \mathcal O^3,\mathfrak{sl}_4(
\mathbb R)
)=\bigoplus_{i=1}^k H^*( \partial_i  \mathcal O^3,\mathfrak{sl}_4(
\mathbb R)) \cong
\bigoplus_{i=1}^k H^*(\pi_1( \partial_i  \mathcal O^3),
\mathfrak{sl}_4(
\mathbb R)).
$$
Being infinitesimally projectively rigid means that the restriction induces an inclusion
\begin{equation}
 \label{eqn:inclusion}
 H^1(\mathcal O^3,\mathfrak{sl}_4(
\mathbb R)
)\hookrightarrow H^1(\partial\mathcal O^3,\mathfrak{sl}_4(
\mathbb R)).
\end{equation}
We furthermore have:

\begin{Lemma}
\label{Lemma:resiso}
Let $\mathcal O^3$ be a compact orientable orbifold, with $\partial \mathcal O^3\neq \emptyset$ and with  hyperbolic interior
$\mathrm{int}( \mathcal O^3 )$, that is  neither elementary and nor
a product
$\mathcal O^2\times [0,1]$.
Assume that $\mathcal O^3$ is infinitesimally projectively rigid
with respect to the boundary. Then:
\begin{enumerate}[(i)]
 \item $\dim H^1(\mathcal O^3,\mathfrak{sl}_4(
\mathbb R))
=\frac12 \dim H^1(\partial\mathcal O^3,\mathfrak{sl}_4(
\mathbb R))$.
 \item The restriction induces an isomorphism
\begin{equation}
\label{eqn:periph}
H^2(\mathcal O^3,\mathfrak{sl}_4(
\mathbb R))\cong H^2(\partial\mathcal O^3,\mathfrak{sl}_4(
\mathbb R)).
 \end{equation}
\end{enumerate}
\end{Lemma}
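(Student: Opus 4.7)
The plan is to combine the long exact sequence of the pair $(\mathcal{O}^3,\partial\mathcal{O}^3)$ with Poincar\'e--Lefschetz duality (Theorem~\ref{TheoremPD}), using the Killing form on $\mathfrak{sl}_4(\mathbb{R})$ as the non-degenerate $\mathrm{Ad}$-invariant self-pairing of the coefficient module.

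First I would collect the vanishings at the ends of the sequence. Since $\partial\mathcal{O}^3\neq\emptyset$, we have $H^0(\mathcal{O}^3,\partial\mathcal{O}^3;\mathfrak{sl}_4(\mathbb{R}))=0$, and Corollary~\ref{Corollary:03irr} yields $H^0(\mathcal{O}^3;\mathfrak{sl}_4(\mathbb{R}))=0$. By Poincar\'e--Lefschetz duality these also give $H^3(\mathcal{O}^3;\mathfrak{sl}_4(\mathbb{R}))=0$ and $H^3(\mathcal{O}^3,\partial\mathcal{O}^3;\mathfrak{sl}_4(\mathbb{R}))=0$, so the long exact sequence reduces to
\begin{multline*}
0\to H^0(\partial\mathcal{O}^3)\to H^1(\mathcal{O}^3,\partial\mathcal{O}^3)\xrightarrow{j} H^1(\mathcal{O}^3)\xrightarrow{r} H^1(\partial\mathcal{O}^3) \\
\xrightarrow{\delta} H^2(\mathcal{O}^3,\partial\mathcal{O}^3)\xrightarrow{j'} H^2(\mathcal{O}^3)\xrightarrow{r'} H^2(\partial\mathcal{O}^3)\to 0,
\end{multline*}
with all coefficients in $\mathfrak{sl}_4(\mathbb{R})$.

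The infinitesimal rigidity hypothesis means $r$ is injective, hence $j=0$ and $H^0(\partial\mathcal{O}^3)\cong H^1(\mathcal{O}^3,\partial\mathcal{O}^3)$. Poincar\'e--Lefschetz duality on $\mathcal{O}^3$ gives $\dim H^1(\mathcal{O}^3,\partial\mathcal{O}^3)=\dim H^2(\mathcal{O}^3)$ and $\dim H^1(\mathcal{O}^3)=\dim H^2(\mathcal{O}^3,\partial\mathcal{O}^3)$, while Poincar\'e duality on the closed 2-orbifold $\partial\mathcal{O}^3$ gives $\dim H^0(\partial\mathcal{O}^3)=\dim H^2(\partial\mathcal{O}^3)$. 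Chaining these one obtains $\dim H^2(\mathcal{O}^3)=\dim H^2(\partial\mathcal{O}^3)$, and since the long exact sequence already furnishes surjectivity of $r'$, it follows that $r'$ is an isomorphism, proving (ii). From (ii), $j'$ vanishes, $\delta$ is surjective, and the sequence collapses to
\begin{equation*}
0\to H^1(\mathcal{O}^3)\xrightarrow{r} H^1(\partial\mathcal{O}^3)\xrightarrow{\delta} H^2(\mathcal{O}^3,\partial\mathcal{O}^3)\to 0;
\end{equation*}
combined with $\dim H^2(\mathcal{O}^3,\partial\mathcal{O}^3)=\dim H^1(\mathcal{O}^3)$ this yields (i).

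The main technical point to clarify is the application of Corollary~\ref{Corollary:03irr}: the lemma excludes elementary and product orbifolds, while Corollary~\ref{Corollary:03irr} asks for non-elementary and non-Fuchsian ones. Under the orientability assumption, a Fuchsian holonomy would realize $\mathcal{O}^3$ as an orientable interval bundle over a $2$-orbifold; ruling out the product case should leave at most a twisted $I$-bundle that is handled either by a brief geometric argument or by a minor strengthening of the hypothesis. Once irreducibility is in hand, everything else is pure bookkeeping with the long exact sequence and duality.
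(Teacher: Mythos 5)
Your argument is correct and delivers both assertions, but it takes a slightly different route than the paper. The paper proves~(i) first, by showing that the image of the restriction $i^*\colon H^1(\mathcal O^3,\mathfrak{sl}_4(\mathbb R))\to H^1(\partial\mathcal O^3,\mathfrak{sl}_4(\mathbb R))$ is a maximal isotropic (Lagrangian) subspace of the skew form~$\Psi$, using the compatibility $\Psi(i^*(a),b)=B(a\cup\delta(b))$; it then proves~(ii) by checking injectivity of $H^2(\mathcal O^3)\to H^2(\partial\mathcal O^3)$ through a similar compatibility argument and surjectivity from the vanishing of $H^3(\mathcal O^3,\partial\mathcal O^3;\mathfrak{sl}_4(\mathbb R))$. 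You instead chain Poincar\'e--Lefschetz dualities with the long exact sequence to establish~(ii) first, from which~(i) follows mechanically. Your version is a little more elementary in that it never needs to mention the symplectic structure on $H^1(\partial\mathcal O^3)$ or the isotropy of the image; the paper's version is what one sees more commonly under the name ``half lives, half dies'' and makes the Lagrangian structure explicit, which is useful elsewhere in the literature. Both proofs ultimately invoke the same two inputs: duality with the Killing form as the self-pairing, and the vanishing $H^0(\mathcal O^3;\mathfrak{sl}_4(\mathbb R))=0$.

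Your closing caveat is well taken and in fact points to an imprecision shared by the paper's own proof. Both arguments need $H^0(\mathcal O^3;\mathfrak{sl}_4(\mathbb R))=0$ from Corollary~\ref{Corollary:03irr}, whose hypothesis is ``not elementary and not Fuchsian'', whereas the lemma as stated only excludes the product $\mathcal O^2\times[0,1]$. For an orientable $\mathcal O^3$ a Fuchsian holonomy gives an $I$-bundle over a 2-orbifold, which may be a twisted $I$-bundle over a non-orientable base; in that case the holonomy in $\mathrm{SL}_4(\mathbb R)$ is still reducible (it preserves the $3$-dimensional subspace), the invariant $\mathrm{diag}(1,1,1,-3)$ survives, and $H^0(\mathcal O^3;\mathfrak{sl}_4(\mathbb R))\neq 0$, so the argument breaks. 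In other words the hypothesis of the lemma should be read as ``not an interval bundle over a 2-orbifold'' (equivalently, non-Fuchsian), which is exactly the hypothesis of Theorem~\ref{Theorem:3dim} under which the lemma is applied; you were right to flag this rather than try to prove it from ``not a product'' alone.
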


\begin{proof}[Proof of Lemma~\ref{Lemma:resiso}]
By~\eqref{eqn:inclusion}, the long exact sequence
in cohomology of the pair
$(\mathcal O^3,\partial\mathcal O^3)$
writes as
 \begin{equation*}
   0\to H^1(\mathcal O^3,\mathfrak{sl}_4
(\mathbb R))\overset{i^*} \to
H^1(\partial \mathcal O^3,\mathfrak{sl}_4 (\mathbb R))
\overset\delta\to
H^2(\mathcal O^3,\partial \mathcal O^3,\mathfrak{sl}_4 (\mathbb R))
\to\dotsb
 \end{equation*}
 We claim that
the maps $i^*$ and $\delta$ are compatible with the pairings
used in Theorem~\ref{TheoremPD} induced by the Killing form.
Namely, let
$$
B\colon \mathfrak{sl}_4 (\mathbb R)  \times
\mathfrak{sl}_4 (\mathbb R)\to\mathbb R
$$
denote the Killing form. We have the non-degenerate pairings
$$
B(\cdot \cup\cdot)\colon H^1(\mathcal O^3, \mathfrak{sl}_4 (\mathbb R))
\times
H^2(\mathcal O^3,\partial \mathcal O^3, \mathfrak{sl}_4 (\mathbb R))
\to
H^3(\mathcal O^3,\partial \mathcal O^3,\mathbb R)
\cong \mathbb R,
$$
and, for $j=1,\dotsc,k$,
$$
(\cdot,\cdot)_j=
B(\cdot \cup\cdot)\colon H^1(\partial_j \mathcal O^3, \mathfrak{sl}_4 (\mathbb R))
\times
H^1(\partial_j \mathcal O^3, \mathfrak{sl}_4 (\mathbb R))
\to
H^2(\partial_j\mathcal O^3,\mathbb R)
\cong \mathbb R.
$$
We denote their (orthogonal) sum  by
$$
\Psi=\sum(\cdot,\cdot)_j
\colon H^1(\partial \mathcal O^3, \mathfrak{sl}_4 (\mathbb R))
\times
H^1(\partial \mathcal O^3, \mathfrak{sl}_4 (\mathbb R))
\to \mathbb R.
$$
By naturality, for $a\in  H^1(\mathcal O^3, \mathfrak{sl}_4 (\mathbb R))$ and $b\in H^1(\partial \mathcal O^3, \mathfrak{sl}_4 (\mathbb R))$,
\begin{equation}
 \label{eqn:compatibility}
\Psi(i^*(a),b)= B(a\cup \delta(b))).
\end{equation}

Next we claim that the image of $i^*$ is a maximal isotropic subspace
of~$\Psi$. This is a standard argument done for instance
in \cite{HeusenerPorti11, PortiDim,  Sikora}, but we provide it by completeness. For every
$a\in  H^1(\mathcal O^3, \mathfrak{sl}_4 (\mathbb R))$, the compatibility condition \eqref{eqn:compatibility} and exactness yield
$
\Psi(i^*(a),i^*(a))= B(a\cup \delta(i^*(a))))=0.
$
Thus the image of $i^*$ is isotropic. To get maximality, if $b\in H^1(\partial \mathcal O^3, \mathfrak{sl}_4 (\mathbb R))$
satisfies $\Psi(i^*(a),b)$ for every
$a\in  H^1(\mathcal O^3, \mathfrak{sl}_4 (\mathbb R))$, then by
\eqref{eqn:compatibility}
and non-degeneracy of the pairing, $\delta(b)=0$.
Therefore $b$ belongs to the image of $i^*$.
The pairing $\Psi$ being skew-symmetric and non-degenerate, its
maximal isotropic subspaces are half-dimensional. This proves the Assertion (i) of the lemma.

For the second assertion,
the maps of the long exact sequence
\begin{equation}
 \label{eqn:H1}
H^1(\mathcal O^3,\partial \mathcal O^3,\mathfrak{sl}_4(\mathbb R))\to H^1(\mathcal O^3,\mathfrak{sl}_4(\mathbb R))
\end{equation}
and
\begin{equation}
 \label{eqn:H2}
 H^2(\mathcal O^3,\partial \mathcal O^3,\mathfrak{sl}_4(\mathbb R))\to H^2(\mathcal O^3,\mathfrak{sl}_4(\mathbb R))
\end{equation}
are also compatible with the pairing $B(\cdot\cup\cdot)$, similarly to \eqref{eqn:compatibility}.
The map \eqref{eqn:H1} is trivial, because the next map in the long exact sequence
of the pair,
$i^*$, is injective.
Then, using compatibility with the pairing and non-degeneracy, one can prove that
the map \eqref{eqn:H2} is also trivial.
Next
we consider the continuation of the long exact sequence of
the pair:
$$
0\to H^2(\mathcal O^3,\mathfrak{sl}_4(\mathbb R))\to
 H^2(\partial\mathcal O^3,\mathfrak{sl}_4(\mathbb R))
 \to
 H^3(\mathcal O^3,\partial\mathcal O^3\mathfrak{sl}_4(\mathbb R)).
$$
The conclusion then comes from the vanishing of
$H^3(\mathcal O^3,\partial\mathcal O^3,\mathfrak{sl}_4(\mathbb R))$:
it is dual to $H^0(\mathcal O^3;\mathfrak{sl}_4(\mathbb R))$
that  vanishes
by Corollary~\ref{Corollary:03irr}.
\end{proof}

\subsection{Smoothness of varieties of representations}
\label{subsection:Smoothness}

In this subsection we prove one of the
implications of Theorem~\ref{Theorem:3dim},
relying on the smoothness of varieties of representations on
the components  of $\partial \mathcal O^3$.
As in the previous subsection,
let
$$
\partial \mathcal O^3= \partial_1\mathcal O^3_1\cup \cdots
\cup  \partial_k\mathcal O^3
$$
denote the decomposition into connected components of the
boundary.

Assume that $\mathcal O^3$ is hyperbolic and let
$\rho_i\colon \pi_1(\partial_i \mathcal O^3)\to \mathrm{SL}_4(\mathbb R)$
denote the restriction of its holonomy, composed with the
inclusion $\mathrm{SO}(3,1)\subset\mathrm{SL}_4(\mathbb R)$.

\begin{Lemma}
\label{Lemma:rho_iNF}
If either $\rho_i$ is non-Fuchsian or $\partial_i \mathcal O^3$
is a turnover,
then
$\hom(\pi_1(\partial_i\mathcal O^3),\mathrm{SL}_4(\mathbb R))$
is smooth at
$\rho_i$. Equivalently, Goldman's obstructions to integrability
of Lemma~\ref{Lemma:GoldmanObstructions}
vanish.
 \end{Lemma}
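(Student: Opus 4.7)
The plan is to split according to the two hypotheses: (A) $\rho_i$ is non-Fuchsian, and (B) the remaining sub-case in which $\partial_i\mathcal O^3$ is a turnover and $\rho_i$ is Fuchsian.

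For Case~(A) the strategy is to reduce to the $\mathbb C$-irreducible setting of Proposition~\ref{Proposition:Goldman}. The compact 2-orbifold $\partial_i\mathcal O^3$ has $\chi<0$, and its fundamental group maps via $\rho_i$ as a discrete, non-elementary subgroup of $\mathrm{SO}_0(3,1)$. The proof of Lemma~\ref{Lemma:03irr} rests only on non-elementarity and the absence of an invariant totally geodesic plane, not on compactness of the ambient 3-orbifold; so the same argument applied to the end $\mathbb H^3/\rho_i(\pi_1(\partial_i\mathcal O^3))$ yields that $\rho_i$ is $\mathbb C$-irreducible in $\mathrm{SL}_4(\mathbb R)$. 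Proposition~\ref{Proposition:Goldman}(i) then gives smoothness of $\hom(\pi_1(\partial_i\mathcal O^3),\mathrm{SL}_4(\mathbb R))$ at $\rho_i$.

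For Case~(B), the plan is to exploit the decomposition of Section~\ref{Section:Decomposition} together with Theorem~\ref{Theorem:Quadratic}. The Fuchsian $\rho_i$ factors through the stabilizer $\mathrm{SO}(2,1)\subset\mathrm{SO}(3,1)$ of a spacelike line $\ell\subset\mathbb R^4$; writing $V=\ell^\perp$ one has $\mathbb R^4=V\oplus\ell$ as $\mathrm{SO}(2,1)$-module with $V$ standard irreducible, so $\rho_i$ factors through $\mathrm{SL}(V)\hookrightarrow\mathrm{SL}_4(\mathbb R)$. Setting $\mathfrak g_0=\mathfrak{sl}(V)\cong\mathfrak{sl}_3(\mathbb R)$, Section~\ref{Section:Decomposition} gives the $\pi_1(T)$-module decomposition
\[
\mathfrak{sl}_4(\mathbb R)=\mathfrak g_0\oplus\mathbf m_{\mathsf c}\oplus\mathbf m_{\mathsf r}\oplus\mathbf d
\]
with $\mathbf m_{\mathsf c}\cong V$, $\mathbf m_{\mathsf r}\cong V^*$, and $\mathbf d$ trivial. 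Turnover rigidity forces $H^1(T,\mathbf m_{\mathsf c})=H^1(T,\mathbf m_{\mathsf r})=0$ (vanishing Teichm\"uller space) and $H^1(T,\mathbf d)=0$ (triangle groups have finite abelianization), whence $H^1(T,\mathfrak{sl}_4(\mathbb R))=H^1(T,\mathfrak g_0)$. By Remark~\ref{Remark:02irr}, $H^0(T,\mathfrak g_0)=0$, so Poincar\'e duality gives $H^2(T,\mathfrak g_0)=0$. For any $z\in H^1(T,\mathfrak{sl}_4(\mathbb R))$, which therefore lies in $H^1(T,\mathfrak g_0)$, the Lie-bracket cup product $[z\cup z]$ takes values in $H^2(T,\mathfrak g_0)=0$ since $[\mathfrak g_0,\mathfrak g_0]\subset\mathfrak g_0$, and hence vanishes. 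Theorem~\ref{Theorem:Quadratic} then identifies the germ of $\hom(\pi_1(T),\mathrm{SL}_4(\mathbb R))$ at the semi-simple $\rho_i$ with $\{z\in Z^1:[z\cup z]\sim 0\}$; since that cup product vanishes identically, the cone is the full vector space $Z^1$, yielding smoothness.

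The main obstacle is Case~(B): although $H^2(T,\mathfrak{sl}_4(\mathbb R))$ is non-zero --- its $\mathbf d$-summand contributes a copy of $H^2(T,\mathbb R)\cong\mathbb R$ by Poincar\'e duality --- so the Goldman obstruction is \emph{a priori} non-zero. The key observation is that turnover rigidity confines all infinitesimal deformations to the subalgebra $\mathfrak g_0$, where the Lie bracket closes and $H^2$ vanishes, so the obstruction never reaches the $\mathbf d$-summand.
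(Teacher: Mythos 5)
Your Case~(A) has a genuine gap: the phrase ``$\rho_i$ is non-Fuchsian'' in the lemma includes Euclidean boundary components (cusp cross-sections), since the paper explicitly states, just after Definition~\ref{Definition:infinitesimal}, that finite-volume ends are \emph{not} considered Fuchsian. For a Euclidean end your two key assertions both fail: $\chi(\partial_i\mathcal O^3)=0$ rather than $\chi(\partial_i\mathcal O^3)<0$, and the cusp holonomy $\rho_i(\pi_1(\partial_i\mathcal O^3))$ is a discrete parabolic group fixing a point of $\partial_\infty\mathbb H^3$, hence elementary and $\mathbb C$-reducible (it preserves a full flag of $\mathbb C^4$). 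Consequently Lemma~\ref{Lemma:03irr} does not give irreducibility and Proposition~\ref{Proposition:Goldman}(i) cannot be invoked. The paper treats this case separately: for a peripheral torus it quotes the explicit computation of Ballas, Danciger and Lee showing that all obstructions vanish, and for a non-manifold Euclidean $2$-orbifold it passes to a finite torus cover (Bieberbach) and uses naturality/equivariance of Goldman's obstructions. Your proof needs an analogous extra case to be complete.

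Your Case~(B) is correct and takes a mildly different route from the paper. The paper first deduces smoothness from Corollary~\ref{Corollary:turnover} (every $\mathrm{SL}_4(\mathbb R)$-deformation of a turnover is conjugate into $\mathrm{SL}_3(\mathbb R)$, whose representation variety is smooth by Goldman), and only then identifies the obstruction components via the decomposition~\eqref{eqn:directsum}. You bypass Corollary~\ref{Corollary:turnover} and argue directly that $H^1(T,\mathfrak{sl}_4(\mathbb R))$ collapses to $H^1(T,\mathfrak g_0)$, that the bracket cup product lands in $H^2(T,\mathfrak g_0)=0$, and then invoke Theorem~\ref{Theorem:Quadratic} to conclude the germ is linear. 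Both arguments work; note that Theorem~\ref{Theorem:Quadratic} is applicable because a turnover is closed and orientable, and that the vanishing of the obstruction class in $H^2(T,\mathfrak{sl}_4(\mathbb R))$ is correctly justified since it factors through the direct summand $H^2(T,\mathfrak g_0)$.
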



\begin{proof}
We distinguish three cases, according to the topology of the 2-orbifold
$\partial_i\mathcal O^3$.

First assume that  $\partial_i \mathcal O^3$ is Euclidean
(ie the corresponding end has finite volume).
If $\partial_i \mathcal O^3$ is a
manifold, then $\partial_i \mathcal O^3$ is a 2-torus and in this case \cite[Lemma~3.4]{BDL} Ballas, Danciger, and Lee prove that for a peripheral torus
the space of representations in $\mathrm{SL}_4(\mathbb R)$ is smooth,
and all obstructions to integrability
in Lemma~\ref{Lemma:GoldmanObstructions}
vanish, see also
\cite[Proof of Theorem~3.2]{BDL}. If the Euclidean  orbifold
$\partial_i \mathcal O^3$ is not a manifold, then by Bieberbach theorem
$\partial_i \mathcal O^3$ has a finite regular
covering that is a 2-torus $T^2$, and the cohomology
of $\partial_i \mathcal O^3$
is equivalent to the equivariant cohomology of $T^2$.
So the claim follows  from naturality of the obstruction by using
equivariance.

Next assume that
$\partial_i \mathcal O^3$ is hyperbolic and $\rho_i$ is not Fuchsian.
In this case Proposition~\ref{Proposition:Goldman} applies to $\rho_i$,
because by Lemma~\ref{Lemma:03irr}
 $\rho_i$ is $\mathbb C$-irreducible. As we are interested in
the obstruction to integrability, it is worth recalling the proof
of Proposition~\ref{Proposition:Goldman} in \cite{Goldman}: by Corollary~\ref{Corollary:03irr}
$H^0(\pi_1(\partial_i \mathcal O^3),\mathfrak{sl}_4(\mathbb R))=0$
and therefore, by Poincaré duality,
$H^2(\pi_1(\partial_i \mathcal O^3), \mathfrak{sl}_4(\mathbb R) )=0$.
Thus there are no obstructions to integrability   at all.

Finally assume that
$\partial_i \mathcal O^3$
is a turnover,
we have shown that every representation of $\pi_1(\partial_i \mathcal O^3)$ in $\mathrm{SL}_4(\mathbb R)$
is conjugate to a representation in $\mathrm{SL}_3(\mathbb R)$ by Corollary~\ref{Corollary:turnover}.
It follows that the space of representations in $\mathrm{SL}_4(\mathbb R)$ is smooth, as
the space of representations in $\mathrm{SL}_3(\mathbb R)$ is smooth by \cite{Goldman} (it is the Choi-Goldman space).
To describe explicitly Goldman's obstructions, we consider the decomposition of the Lie algebra by $\pi_1(\partial_i \mathcal O^3)$-modules
of \eqref{eqn:directsum}:
$$
\mathfrak{sl}_4(\mathbb R)=
\mathfrak{sl}_3(\mathbb R) \oplus \mathbb R^3_{\rho_i}
\oplus \mathbb R^3_{\rho_i^3}\oplus
\mathbf d,
$$
where $\mathbf d\cong\mathbb R$ is the subspace of diagonal
matrices in
$ \mathfrak{sl}_4(\mathbb R) $ that commute with every matrix in
$ \mathfrak{sl}_3(\mathbb R )$. The vanishing of the obstructions follows from the fact that $H^1(\partial_i \mathcal O^3, \mathbb R^3_{\rho_i})=
H^1(\partial_i \mathcal O^3, \mathbb R^3_{\rho_i^*})=
H^1(\partial_i \mathcal O^3, \mathbf d)= 0$ (see Corollary~\ref{Corollary:turnover} and the previous paragraph), and
$H^2( \partial_i \mathcal O^3, \mathfrak{sl}_3(\mathbb R) )=0$, by
Remark~\ref{Remark:02irr}.
\end{proof}


The following is one of the implications of Theorem~\ref{Theorem:3dim}.

\begin{Proposition}
\label{Proposition:Smoothness}
 Under the hypothesis of  Theorem~\ref{Theorem:3dim},
 if all ends of
$\mathrm{int}( \mathcal O^3 )$ are either non-Fuchsian or turnovers,
then the character of $\rho$ is a smooth point
of $X(\mathcal O^3, \mathrm{SL}_4(\mathbb R))$.
\end{Proposition}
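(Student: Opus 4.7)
The plan is to reduce the proof to verifying that all of Goldman's integrability obstructions vanish at $\rho$, then invoke the slice theorem to pass from smoothness of the variety of representations to smoothness of the character variety. First, by Lemma~\ref{Lemma:03irr} the holonomy $\rho$ is $\mathbb C$-irreducible in $\mathrm{SL}_4(\mathbb R)$, so its stabilizer $G_\rho$ is the center $\mathcal Z(\mathrm{SL}_4(\mathbb R))$, which acts trivially by conjugation on $\hom(\pi_1(\mathcal O^3), \mathrm{SL}_4(\mathbb R))$. Applying Theorem~\ref{Theorem:Slice} and Corollary~\ref{Corollary:quotient}, a neighborhood of $[\rho]$ in $X(\mathcal O^3, \mathrm{SL}_4(\mathbb R))$ is bi-analytically equivalent to a neighborhood of $\rho$ in the slice $\mathcal S$, which is smooth as soon as $\hom(\pi_1(\mathcal O^3), \mathrm{SL}_4(\mathbb R))$ is smooth at $\rho$. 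Hence it is enough to prove the latter.

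To show this smoothness, I would argue inductively using Lemma~\ref{Lemma:GoldmanObstructions}: given an $n$-jet at $\rho$ in $J^n_{0,\rho}(\mathbb R,\hom(\pi_1(\mathcal O^3), \mathrm{SL}_4(\mathbb R)))$, its obstruction to lifting to an $(n+1)$-jet is a class $\mathfrak{o}\in H^2(\mathcal O^3,\mathfrak{sl}_4(\mathbb R))$. The obstruction is natural with respect to the inclusion $\partial \mathcal O^3\hookrightarrow \mathcal O^3$, so the restriction map
$$
r\colon H^2(\mathcal O^3,\mathfrak{sl}_4(\mathbb R))\longrightarrow H^2(\partial \mathcal O^3,\mathfrak{sl}_4(\mathbb R))=\bigoplus_{i=1}^k H^2(\partial_i\mathcal O^3,\mathfrak{sl}_4(\mathbb R))
$$
sends $\mathfrak{o}$ to the tuple of obstructions associated to the boundary restrictions of the same $n$-jet. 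The hypothesis that each end is either non-Fuchsian or a turnover is exactly what Lemma~\ref{Lemma:rho_iNF} requires, so each boundary obstruction vanishes. Lemma~\ref{Lemma:resiso}(ii) supplies the key fact that $r$ is an isomorphism; therefore $\mathfrak{o}=0$, and the $n$-jet lifts.

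Iterating this process, every formal deformation of $\rho$ exists to all orders. To pass from formal deformations to honest analytic ones, I would invoke the standard fact that $\pi_1(\mathcal O^3)$ is finitely presented, so $\hom(\pi_1(\mathcal O^3), \mathrm{SL}_4(\mathbb R))$ is an affine real-algebraic scheme of finite type at $\rho$: vanishing of all Goldman obstructions together with $\dim H^1(\mathcal O^3,\mathfrak{sl}_4(\mathbb R))$ agreeing with the Zariski tangent dimension (via Weil's isomorphism) implies local analytic smoothness of the scheme at $\rho$. Combined with the first paragraph, this gives smoothness of $X(\mathcal O^3,\mathrm{SL}_4(\mathbb R))$ at $[\rho]$.

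The main obstacle is the naturality step: one must justify carefully that the inductive obstructions (not merely the primary cup-product obstruction) behave functorially under restriction to the boundary, so that the isomorphism $r$ of Lemma~\ref{Lemma:resiso}(ii) can be applied at every order. This is formal from the bar-resolution construction of the obstructions but should be recorded explicitly. A subsidiary technical issue is the passage from formal to analytic integrability, which can be settled either by the above dimensional argument or, if desired, by citing a result in the spirit of \cite{HeusenerPorti23}.
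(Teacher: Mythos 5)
Your argument takes essentially the same route as the paper: naturality of Goldman's obstructions, combined with Lemma~\ref{Lemma:rho_iNF} (the boundary obstructions vanish) and Lemma~\ref{Lemma:resiso}(ii) (restriction to the boundary is an isomorphism on $H^2$), forces every obstruction in $H^2(\mathcal O^3,\mathfrak{sl}_4(\mathbb R))$ to vanish, and your preliminary reduction via the slice theorem is correct and implicit in the paper. The one point to sharpen is the formal-to-analytic step: the paper cites Artin's approximation theorem~\cite{Artin}, which is precisely the tool that replaces a formal power-series deformation by a convergent one with prescribed tangent vector, and thus shows that every Zariski tangent vector is tangent to an actual analytic path. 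Your alternative ``dimensional argument'' does not close this on its own --- note that the Zariski tangent space of $\hom(\pi_1(\mathcal O^3),\mathrm{SL}_4(\mathbb R))$ at $\rho$ is $Z^1$, not $H^1$, and one still needs a theorem to pass from jets extending to all orders to convergence --- but you correctly flag this as the place where a reference is required, so the structure of the proof is sound.
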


\begin{proof} Since Goldman's obstructions to integrability
are natural, by Lemmas~\ref{Lemma:resiso}
and
\ref{Lemma:rho_iNF}
all obstructions to integrability in
$H^2(\mathcal O^3,\mathfrak{sl}_4(\mathbb{R}))$ vanish.
So  we have smoothness by applying a theorem of Artin.
More precisely, the vanishing of obstructions
yields a formal deformation of $\rho$ in the direction of any cocycle $z$
in the Zariski tangent space, where
formal means a power series perhaps not convergent. Then the theorem of Artin asserts that
the formal power series can be replaced by a converging series with the same tangent vector
\cite{Artin}. As every vector in the Zariski tangent space is tangent to a path, it is a smooth point.
 See \cite{GoldmanMillson,HPS}
for more details.
 \end{proof}
%
%
%
%
%

\subsection{Deforming
representations in $\mathrm{Isom}^+(\mathbb{H}^3)$.}
\label{Subsection:Deforming}


To continue with the proof of Theorem~\ref{Theorem:3dim}, in the next
proposition we prove that the holonomy can be perturbed to a
representation with similar algebraic properties,
such that the image of peripheral subgroups other than turnovers
are not conjugate to subgroups of $\mathrm{Isom}^+(\mathbb H^2)$
(eg no Fuchsian ends other than turnovers).
It is not clear than this perturbed
representation is the holonomy of a hyperbolic
structure, this would require different techniques and we do not use
hyperbolicity. All we need is that the dimension of the Zariski tangent
space decreases, so that the initial character is a singular point.

\begin{Proposition}
\label{Proposition:NFD}
Let  $\mathcal O^3$ be a compact orientable 3-orbifold with hyperbolic interior and holonomy $\rho_0\in\hom(\pi_1(\mathcal O^3,\mathrm{Isom}^+(\mathbb{H}^3))$.
There is a representation $\rho\in \hom(\pi_1(\mathcal O^3),
\mathrm{Isom}^+(\mathbb{H}^3))$
arbitrarily close to $\rho_0$ such that, for every
component $\partial_i\mathcal O^3$ of $\partial\mathcal O^3$ other than a turnover,
$\rho(\pi_1( \partial_i\mathcal O^3 )) $ is not conjugated to
a subgroup of
$\mathrm{Isom}^+(\mathbb{H}^2)$.
%
 \end{Proposition}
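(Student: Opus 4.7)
The plan is to show that for each non-turnover boundary component $\partial_i \mathcal{O}^3$, the set $F_i$ of representations in $\hom(\pi_1(\mathcal{O}^3), \mathrm{Isom}^+(\mathbb{H}^3))$ whose restriction to $\pi_1(\partial_i \mathcal{O}^3)$ is conjugate to a subgroup of $\mathrm{Isom}^+(\mathbb{H}^2)$ is a proper closed subvariety through $\rho_0$; then a generic small perturbation of $\rho_0$ avoids the finite union $\bigcup_i F_i$. The set $F_i$ is closed because being conjugate to a subgroup of $\mathrm{Isom}^+(\mathbb{H}^2)$ is equivalent to preserving a round $2$-sphere in $\partial_\infty \mathbb{H}^3$, a closed algebraic condition on the character.

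If the end corresponding to $\partial_i$ is non-Fuchsian, then $\rho_0|_{\pi_1(\partial_i)}$ is already not conjugate into $\mathrm{Isom}^+(\mathbb{H}^2)$, so $\rho_0 \notin F_i$ and the openness of the complement of $F_i$ finishes this case. The substantial case is when the end is Fuchsian and $\partial_i$ is not a turnover. Here $\rho_0|_{\pi_1(\partial_i)}$ preserves a hyperbolic plane $H_i^2 \subset \mathbb{H}^3$, and the adjoint action decomposes as a $\pi_1(\partial_i)$-module $\mathfrak{sl}_2(\mathbb{C}) = \mathfrak{sl}_2(\mathbb{R}) \oplus i\mathfrak{sl}_2(\mathbb{R})$, with the tangent space to $F_i$ corresponding, at the level of $\pi_1(\partial_i)$-cohomology, to the real summand. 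Both summands are equivalent to the standard $\pi_1(\partial_i)$-module $\mathbb{R}^3$, so by the Teichm\"uller dimension computation $\dim H^1(\pi_1(\partial_i), i\mathfrak{sl}_2(\mathbb{R})) = \dim \mathrm{Teich}(\partial_i)$, and this is strictly positive exactly when $\partial_i$ is not a turnover.

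The required deformation exiting $F_i$ is produced by Thurston's bending. Pick an essential simple closed geodesic (or an essential closed 1-suborbifold) $\gamma_i \subset \partial_i \mathcal{O}^3$, which exists because $\partial_i$ is hyperbolic and not a turnover. A convenient way to implement the bending is to pass to the mirror double $D_i\mathcal{O}^3 = \mathcal{O}^3 \cup_{\partial_i} \mathcal{O}^3$, in which $\partial_i$ becomes an internal totally geodesic 2-suborbifold; the amalgamated product description of $\pi_1(D_i \mathcal{O}^3)$ obtained by cutting along $\partial_i$ allows one to conjugate one factor by $\exp(t J_i)$, where $J_i \in i\mathfrak{sl}_2(\mathbb{R})$ generates a rotation about the axis of $\rho_0(\gamma_i)$. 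Restricting the resulting one-parameter family to the index-two subgroup $\pi_1(\mathcal{O}^3) \subset \pi_1(D_i \mathcal{O}^3)$ produces a nontrivial deformation $\rho_t$ of $\rho_0$, and a direct calculation shows that its derivative at $t = 0$, restricted to $\pi_1(\partial_i)$, is a non-trivial infinitesimal bending class in $H^1(\pi_1(\partial_i), i\mathfrak{sl}_2(\mathbb{R}))$, so $\rho_t$ exits $F_i$ at first order.

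Having exhibited, for each non-turnover $\partial_i$, a deformation exiting the corresponding $F_i$, it follows that every $F_i$ is a proper closed subvariety of $\hom(\pi_1(\mathcal{O}^3), \mathrm{Isom}^+(\mathbb{H}^3))$ near $\rho_0$, and so the complement of $\bigcup_i F_i$ is locally open and dense; any sufficiently generic small perturbation of $\rho_0$ therefore satisfies the conclusion. The main obstacle I anticipate is making the bending construction rigorous in the orbifold setting---in particular, choosing a suitable $\gamma_i$ (handling both the manifold case and the case where $\partial_i$ has singular locus) and verifying cohomologically that the induced bending class on $\pi_1(\partial_i)$ has nonzero image in $H^1(\pi_1(\partial_i), i\mathfrak{sl}_2(\mathbb{R}))$, since the restriction map from the double to $\pi_1(\mathcal{O}^3)$ could in principle kill the interesting component.
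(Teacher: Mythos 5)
Your overall strategy (show each $F_i$ is a proper closed subset near $\rho_0$, then perturb generically to escape the union) is in the right spirit, but the construction you propose to exhibit properness of $F_i$ does not work, and this is exactly the nontrivial point.

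The problem is with the bending step. Two readings are possible, and both fail. If you bend $\pi_1(D_i\mathcal{O}^3)$ along the totally geodesic suborbifold $\partial_i$ in the usual sense, then the bending parameter $J_i$ must commute with $\rho_0(\pi_1(\partial_i\mathcal{O}^3))$, so it is an infinitesimal rotation about the normal to the plane $H_i^2$, not about the axis of $\rho_0(\gamma_i)$. The resulting bending cocycle $z$ vanishes on one of the two amalgamated factors and on $\pi_1(\partial_i\mathcal{O}^3)$; so after restriction to $\pi_1(\mathcal{O}^3)\subset\pi_1(D_i\mathcal{O}^3)$ (whichever factor you identify it with) the deformation is either literally $\rho_0$ or a conjugate of $\rho_0$, and in particular the restriction to $\pi_1(\partial_i\mathcal{O}^3)$ does not move. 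Bending along $\partial_i$ leaves its own holonomy fixed by definition. If instead you really intend $J_i$ to be a rotation about the axis of $\rho_0(\gamma_i)$, then $\exp(tJ_i)$ does not centralize $\rho_0(\pi_1(\partial_i\mathcal{O}^3))$, so conjugating one factor by $\exp(tJ_i)$ does not define a representation of the amalgamated product at all. What you actually want is quasi-Fuchsian bending of the surface group $\pi_1(\partial_i\mathcal{O}^3)$ along $\gamma_i$; but this is a deformation of the restriction only, and there is no reason a priori that it extends to a deformation of $\rho$ on all of $\pi_1(\mathcal{O}^3)$ --- precisely the obstruction you would need to overcome.

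The paper sidesteps this by avoiding an explicit geometric construction altogether. It uses Kapovich's results that $X(\mathcal{O}^3,\mathrm{PSL}_2(\mathbb{C}))$ is smooth at $\chi_0$ with tangent space $H^1(\mathcal{O}^3,\mathfrak{sl}_2(\mathbb{C}))$ and that the restriction map to the boundary character variety is an immersion, together with the maximal-isotropic (``half-lives-half-dies'') argument of Lemma~\ref{Lemma:resi}: the image of $\mathrm{res}_*$ is a maximal isotropic subspace of the direct sum pairing, hence projects nontrivially to $H^1(\partial_i\mathcal{O}^3,\mathfrak{sl}_2(\mathbb{C}))$ for every non-turnover component. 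Picking $v$ outside the finite union of kernels of $(\mathrm{res}_i)_*$ and then replacing $v$ by $\lambda v$ for generic $\lambda\in\mathbb{C}$ arranges $(\mathrm{res}_i)_*(v)\notin H^1(\partial_i\mathcal{O}^3,\mathfrak{sl}_2(\mathbb{R}))$ for each Fuchsian non-turnover $\partial_i$, and the path of characters tangent to $v$ exits the Fuchsian locus at first order. This cohomological argument is what guarantees that the deformation you want actually exists on $\pi_1(\mathcal{O}^3)$; your bending construction does not produce it and so does not establish that the $F_i$ are proper.
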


\begin{proof}
We use the group isomorphisms
$$
\mathrm{Isom}^+(\mathbb{H}^3)\cong
\mathrm{SO}_0(3,1)\cong \mathrm{PSL}_2(\mathbb C)\cong
\mathrm{SO}(3,\mathbb C).
$$
The character $\chi_0$ of the holonomy~$\rho_0$ is a smooth point of
the variety of characters $X(\mathcal O^3, \mathrm{PSL}_2(\mathbb C) )$
with tangent space $H^1(\mathcal O^3,\mathfrak{sl}_2(\mathbb C) )$, as shown by M.~Kapovich in
\cite[Section~8.8]{KapovichBook}. Furthermore, Kapovich proves that the restriction
$$\mathrm{res}\colon
X(\mathcal O^3, \mathrm{PSL}_2(\mathbb C) )\to X(\partial \mathcal O^3, \mathrm{PSL}_2(\mathbb C) )=\prod_i X(\partial_i \mathcal O^3, \mathrm{PSL}_2(\mathbb C) )
$$
is an immersion in a neighborhood of $\chi_0$, with (injective) tangent map
$$
\mathrm{res}_*\colon
 H^1(\mathcal O^3,\mathfrak{sl}_2(\mathbb C) )
\to
H^1(\partial \mathcal O^3,\mathfrak{sl}_2(\mathbb C) )
\cong \bigoplus_i H^1(\partial_i \mathcal O^3,\mathfrak{sl}_2(\mathbb C) ),
$$
the natural map
induced by the inclusion $\partial \mathcal{O}^3\subset \mathcal{O}^3$.

For each hyperbolic component $\partial_i\mathcal O^3$ other than a turnover,
its Teichm\"uller space (and hence
$X(\partial_i \mathcal O^3, \mathrm{PSL}_2(\mathbb C) )$) has positive dimension.
We claim the following:

\begin{Lemma}
\label{Lemma:resi}
 There exists a tangent vector $v\in  H^1(\mathcal O^3,\mathfrak{sl}_2(\mathbb C) )$
 such that for each component $\partial_i \mathcal O^3$ other than a turnover
 $$(\mathrm{res}_i)_*(v)\neq 0,
 $$
 where $(\mathrm{res}_i)_*\colon
 H^1(\mathcal O^3,\mathfrak{sl}_2(\mathbb C) )
\to
H^1(\partial_i \mathcal O^3,\mathfrak{sl}_2(\mathbb C) )$ is the map induced
by the restriction to that component.
\end{Lemma}

%
%

\begin{proof}[Proof of Lemma~\ref{Lemma:resi}]
We consider the same bilinear form as in Theorem~\ref{TheoremPD} and the
proof of Lemma~\ref{Lemma:resiso}, but with the  complex valued Killing form
$$
\mathfrak{sl}_2(\mathbb C) \times \mathfrak{sl}_2(\mathbb C)
\to\mathbb C.
$$
Namely, for $j=1,\dotsc, k$ we denote by
$$
(\cdot,\cdot)_j\colon H^1( \partial_j \mathcal O^3,
\mathfrak{sl}_2(\mathbb C))\times
H^1( \partial_j \mathcal O^3,
\mathfrak{sl}_2(\mathbb C))\to\mathbb C
$$
the skew-symmetric bilinear form obtained by composing the cup product
and the $\mathbb C$-Killing form. Its direct sum is denoted by
$$
\Psi=\sum (\cdot,\cdot)_j\colon H^1( \partial \mathcal O^3,
\mathfrak{sl}_2(\mathbb C))\times
H^1( \partial \mathcal O^3,
\mathfrak{sl}_2(\mathbb C))\to\mathbb C.
$$
The very same argument as in  the proof of Lemma~\ref{Lemma:resiso}
yields that
the image of $\mathrm{res}_*$
is a maximal isotropic
subspace of $\Psi$.
Since $\Psi$ is a component-wise sum of non-degenerate
forms, this implies that
 the image of each morphism
$(\mathrm{res}_i)_*\colon
 H^1(\mathcal O^3,\mathfrak{sl}_2(\mathbb C) )
\to
H^1(\partial_i \mathcal O^3,\mathfrak{sl}_2(\mathbb C) )$
is nonzero, provided that
$H^1(\partial_i \mathcal O^3,\mathfrak{sl}_2(\mathbb C) )\neq 0$, eg
provided that $\partial_i \mathcal O^3$ is not a turnover.
The last assertion is equivalent to the fact that
$\ker (\mathrm{res}_i)_*$ is not the whole space  $H^1(\mathcal O^3,
\mathfrak{sl}_2(\mathbb C) )$  (whenever $\partial_i \mathcal O^3$ is not a
turnover). Thus the union of those kernels is not the whole
$H^1(\mathcal O^3,
\mathfrak{sl}_2(\mathbb C) )$  and the lemma follows.
\end{proof}

We continue the proof of Proposition~\ref{Proposition:NFD}. Assume the component
$\partial_i \mathcal O^3$ is Fuchsian, namely
 $\rho_0(\pi_1(\partial_i \mathcal O^3))$
 preserves a totally geodesic $\mathbb H^2\subset \mathbb H^3$.  Then after conjugation
 it is contained in $\mathrm{PSL}(2,\mathbb R)$, and we have a decomposition
 $$
 H^1(\partial_i\mathcal{O}^3, \mathfrak{sl}_2(\mathbb C) )=
 H^1(\partial_i\mathcal{O}^3, \mathfrak{sl}_2(\mathbb R) )
 \oplus
\sqrt{-1}\, H^1(\partial_i\mathcal{O}^3, \mathfrak{sl}_2(\mathbb R) ).
 $$
 After multiplying by a generic complex number, we may assume that,
 for every Fuchsian component $\partial_i\mathcal O^3$
different from a turnover, the vector
 $v\in  H^1(\mathcal O^3,\mathfrak{sl}_2(\mathbb C) )$
of Lemma~\ref{Lemma:resi} satisfies
 \begin{equation}
  \label{eqn:resv}
   (\mathrm{res}_i)_*(v) \not\in  H^1(\partial_i\mathcal{O}^3, \mathfrak{sl}_2(\mathbb R) ).
 \end{equation}
We consider
 a deformation $\{\rho_t\}_{t\in [0,\varepsilon)}$ of the holonomy whose
 path of characters
$\{\chi_{\rho_t}\}_{t\in [0,\varepsilon)}$ in
$X( \mathcal O^3,\mathrm{PSL}_2(\mathbb C))$ is tangent to~$v$.

We claim that
for $t>0$ sufficiently small $\rho_t$
satisfies the conclusion of the proposition. Not being Fuchsian is an open property, so we just need to deal
with components $\partial_i\mathcal O^3$ different from a turnover that are Fuchsian, namely
 ${\rho_0}(\pi_1(\partial_i\mathcal O^3))$
is conjugate to a subgroup of $\mathrm{PSL}_2(\mathbb R)$.
If $\partial_i\mathcal O^3$ is Fuchsian, but not a turnover,
then condition \eqref{eqn:resv} means that the path $\rho_t$  restricted to
$\pi_1(\partial_i\mathcal O^3)$ is transverse to the orbit by conjugation
of $\hom (\pi_1(\partial_i \mathcal O^3), \mathrm{PSL}_2(\mathbb R))$ inside
$\hom (\pi_1(\partial_i \mathcal O^3), \mathrm{PSL}_2(\mathbb C))$.
Thus
$\rho_t$  restricted to
$\pi_1(\partial_i\mathcal O^3)$ does not preserve a totally geodesic
hyperbolic plane for $t>0$.
\end{proof}

\subsection{Computing Zariski tangent spaces}
\label{Subsection:ZT}

In Proposition~\ref{Proposition:FuchsianSingular} we prove one more piece of Theorem~\ref{Theorem:3dim}.
The argument is based on
a computation of Zariski tangent spaces (Lemma~\ref{Lemma:dimZTF} below) and
Proposition~\ref{Proposition:NFD} in the last subsection.

\begin{Lemma}
\label{Lemma:dimZTF}
Let $\mathcal O^3$ be a 3-orbifold as in the hypothesis of
Theorem~\ref{Theorem:3dim}.
There exists a neighborhood $U\subset  \hom(\pi_1(\mathcal O^3),
\mathrm{SO}(3,1))$
of the  hyperbolic holonomy
such that for every $\rho\in U$
$$
\dim T^{\mathrm{Zar}}_{[\rho]}
X( \mathcal O^3,\mathrm{SL}_4(\mathbb R))=\frac12
\sum_{i=1}^n
\dim
X(\partial_i \mathcal O^3, \mathrm{SL}_4(\mathbb R))
+ \mathsf{f},
$$
where $\mathsf{f}$ denotes the number of
components $\partial_i\mathcal O^3$ of $\partial \mathcal O^3$
 other than turnovers such that the image
 $\mathcal \rho(\pi_1( \partial_i\mathcal O^3))$
 is contained in a conjugate subgroup of
$\mathrm{SO}(2,1)$.
\end{Lemma}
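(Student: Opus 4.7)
The plan is to identify $T^{\mathrm{Zar}}_{[\rho]}X(\mathcal O^3,\mathrm{SL}_4(\mathbb R))$ with $H^1(\mathcal O^3,\mathfrak{sl}_4^\rho)$, extend Lemma~\ref{Lemma:resiso}(i) to the whole neighborhood $U$, and then invoke Theorem~\ref{Theorem:orientable} boundary component by boundary component. First I would shrink $U$ so that every $\rho\in U$ is non-elementary and non-Fuchsian in $\mathrm{SO}(3,1)$: both are open conditions, since preserving a point of $\mathbb H^3\cup\partial_\infty\mathbb H^3$ or a totally geodesic plane are closed conditions. By Lemma~\ref{Lemma:03irr} such a $\rho$ is $\mathbb C$-irreducible in $\mathrm{SL}_4(\mathbb R)$; by Lemma~\ref{lemma:stabirr} its stabilizer is the center of $G$, so the slice theorem (Theorem~\ref{Theorem:Slice}) together with Weil's construction yields
$$T^{\mathrm{Zar}}_{[\rho]}X(\mathcal O^3,\mathrm{SL}_4(\mathbb R))\cong H^1(\mathcal O^3,\mathfrak{sl}_4^\rho).$$

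Second, I would argue that the half-dimensional identity of Lemma~\ref{Lemma:resiso}(i) persists on $U$ (shrinking if needed), so
$$\dim H^1(\mathcal O^3,\mathfrak{sl}_4^\rho)=\frac12\sum_{i}\dim H^1(\partial_i\mathcal O^3,\mathfrak{sl}_4^\rho).$$
For any $\rho$, the image of $i^*_\rho\colon H^1(\mathcal O^3,\mathfrak{sl}_4^\rho)\to H^1(\partial\mathcal O^3,\mathfrak{sl}_4^\rho)$ is isotropic for the Poincar\'e duality skew-symmetric pairing $\Psi$, by the exactness and cup-product compatibility \eqref{eqn:compatibility} used in the proof of Lemma~\ref{Lemma:resiso}; at $\rho_0$ the image is maximal isotropic. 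Standard semi-continuity of the cochain complex then forces the same sharp identity for $\rho$ close to $\rho_0$, since $\dim H^1$ is upper semi-continuous while the isotropy caps the image dimension at half of the boundary's $H^1$-dimension. I would then compare $\dim H^1(\partial_i,\mathfrak{sl}_4^\rho)$ with $\dim X(\partial_i,\mathrm{SL}_4(\mathbb R))$ for each boundary component: when $\rho|_{\partial_i}$ is $\mathbb C$-irreducible, Proposition~\ref{Proposition:Goldman} gives local smoothness with equality of the two dimensions (and \cite[Lemma~3.4]{BDL} provides the same at finite-volume parabolic ends); when $\rho|_{\partial_i}$ is Fuchsian and $\partial_i$ is not a turnover, Theorem~\ref{Theorem:orientable} with $n+1=4$ gives a local model $\mathbb R^{p_i+b_i}\times\mathrm{Cone}(\mathrm{UT}(S^{d_i-1}))$ of topological dimension $p_i+b_i+2d_i-2$, whereas $\dim H^1(\partial_i,\mathfrak{sl}_4^\rho)=p_i+b_i+2d_i$, for a difference of exactly $2$; and when $\partial_i$ is a turnover, $d_i=b_i=0$ and both dimensions equal $p_i$ by Corollary~\ref{Corollary:turnover}. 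Summing, $\sum_i\dim H^1(\partial_i)-\sum_i\dim X(\partial_i)=2\mathsf f$, which combined with the identity above yields the claimed formula.

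The main obstacle is the persistence of Lemma~\ref{Lemma:resiso} across $U$: a priori the cohomology dimensions can drop and $i^*_\rho$ could fail to be injective away from $\rho_0$. The crucial input is the isotropy of the image of $i^*_\rho$ for the non-degenerate skew-symmetric form $\Psi$, which bounds it by half the (upper semi-continuous) dimension of $H^1(\partial\mathcal O^3,\mathfrak{sl}_4^\rho)$; combined with the sharp equality at $\rho_0$ this forces the sharp identity throughout a suitable neighborhood.
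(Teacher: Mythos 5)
Your approach mirrors the paper's: identify $T^{\mathrm{Zar}}_{[\rho]}X(\mathcal O^3,\mathrm{SL}_4(\mathbb R))$ with $H^1(\mathcal O^3,\mathfrak{sl}_4(\mathbb R))$ via $\mathbb C$-irreducibility (Lemma~\ref{Lemma:03irr}, Theorem~\ref{Theorem:Slice}), extend the half-dimensional formula of Lemma~\ref{Lemma:resiso}(i) over a neighborhood~$U$, then compare $\dim H^1(\partial_i\mathcal O^3)$ with $\dim X(\partial_i\mathcal O^3,\mathrm{SL}_4(\mathbb R))$ component by component. Your boundary case analysis and the difference of exactly~$2$ for each Fuchsian non-turnover end are correct (you read it off the topological model $\mathrm{Cone}(\mathrm{UT}(S^{d_i-1}))$ of Theorem~\ref{Theorem:orientable}, whereas the paper counts it as~$1$ from the quadratic cone cutting the slice plus~$1$ from the one-dimensional stabilizer; same answer).

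The gap is your semicontinuity argument for persistence of the half-dimensional identity. Note that $\operatorname{im}(i^*_\rho)\subset H^1(\partial\mathcal O^3,\mathfrak{sl}_4^\rho)$ is a \emph{maximal} isotropic subspace of $\Psi$ for \emph{every} $\rho$, not just for $\rho_0$: the argument in the proof of Lemma~\ref{Lemma:resiso} uses only exactness of the pair, compatibility \eqref{eqn:compatibility}, and nondegeneracy from Poincar\'e duality, none of which invoke rigidity. Hence $\dim\operatorname{im}(i^*_\rho)=\tfrac12\dim H^1(\partial\mathcal O^3,\rho)$ always, and the half-dimensional identity is \emph{equivalent} to $\ker(i^*_\rho)=0$, i.e.\ to infinitesimal projective rigidity at~$\rho$. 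Your inequalities then read
$$
\tfrac12\dim H^1(\partial\mathcal O^3,\rho)\le\dim H^1(\mathcal O^3,\rho)\le\dim H^1(\mathcal O^3,\rho_0)=\tfrac12\dim H^1(\partial\mathcal O^3,\rho_0),
$$
which pin down the middle term only when $\dim H^1(\partial\mathcal O^3,\rho)=\dim H^1(\partial\mathcal O^3,\rho_0)$. But in exactly the regime the lemma is designed for---$\rho$ close to $\rho_0$ with some Fuchsian end turned non-Fuchsian, as produced by Proposition~\ref{Proposition:NFD}---the boundary cohomology \emph{does} drop (by~$2$ per such end), and your chain leaves $\dim\ker(i^*_\rho)$ bounded only by~$1$ per such end rather than forced to vanish. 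The paper resolves this by citing the openness of infinitesimal projective rigidity, \cite[Lemma~3.2]{HeusenerPorti11}, which is a genuinely finer statement than upper semicontinuity of $\dim H^1$; some substitute for that input is required to close your proof.
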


\begin{proof}
 Infinitesimal projective rigidity is an open condition
 in the variety of representations $\hom(\pi_1(\mathcal O^3),\mathrm{SO}(3,1))$. This follows from semi-continuity,
 see for instance \cite[Lemma 3.2]{HeusenerPorti11}. So we may assume that
 infinitesimal projective rigidity holds true for any $\rho\in U$ and that,
 by Lemma~\ref{Lemma:resiso},
 $$
 \dim T^{\mathrm{Zar}}_{[\rho]}
X( \mathcal O^3,\mathrm{SL}_4(\mathbb R))= \dim H^1(\mathcal O^3,
\mathfrak{sl}_4(\mathbb R))=\frac12\sum_i \dim H^1(\partial_i\mathcal O^3,\mathfrak{sl}_4(\mathbb R)).
 $$
 Let $\rho_i$ denote the restriction of $\rho$ to $\pi_1( \partial_i\mathcal O^3  )$, we aim to relate
 the dimensions of $H^1(\partial_i\mathcal O^3,\mathfrak{sl}_4(\mathbb R))$ and of
 $X(\partial_i \mathcal O^3, \mathrm{SL}_4(\mathbb R))$. We distinguish several possibilities, again according to the topology of $\partial_i\mathcal O^3 $.

When  $\partial_i\mathcal O^3 $ is Euclidean,  by \cite[Theorem~4.11]{PortiDim}
$$
  \dim H^1(\partial_i\mathcal O^3,\mathfrak{sl}_4(\mathbb R)) =
\dim X(\partial_i \mathcal O^3, \mathrm{SL}_4(\mathbb R))
.$$

Assume next that $\partial_i\mathcal O^3 $ is hyperbolic and the image of
$\rho_i$ is not conjugate to a subgroup of $\mathrm{SO}(2,1)$.
Then by Lemma~\ref{Lemma:03irr} $\rho_i$ is irreducible as
a representation in  $\mathrm{SL}_4(\mathbb C)$. In fact  Lemma~\ref{Lemma:03irr}
is stated for hyperbolic holonomies, but the argument is algebraic. We just need to care
about being non-elementary, but this is an open property and we may chose $U$ so that $\rho_i$ is not elementary
(if  $\partial_i\mathcal O^3 $ is hyperbolic).
Then by Proposition~\ref{Proposition:Goldman}
$$
H^1(\partial_i\mathcal O^3,\mathfrak{sl}_4(\mathbb R))\cong
T^{\mathrm{Zar}}_{[\rho_i]}
X(\partial_i \mathcal O^3, \mathrm{SL}_4(\mathbb R))
=\dim
X(\partial_i \mathcal O^3, \mathrm{SL}_4(\mathbb R))
.$$

Finally, assume  that $\partial_i\mathcal O^3 $ is hyperbolic and the image of
$\rho_i$ is conjugate to a subgroup of $\mathrm{SO}(2,1)$. By taking
$U$ small enough, $\rho_i$ is irreducible in  $\mathrm{SL}_3(\mathbb R)$
and by the results of Section~\ref{Section:DefSpace} we have:
$$
\dim H^1(\partial_i\mathcal O^3,\mathfrak{sl}_4(\mathbb R))=
\dim
X(\partial_i \mathcal O^3, \mathrm{SL}_4(\mathbb R))
+ 2.
$$
More precisely, in Theorem~\ref{Theorem:Slice} we construct an
analytic subset
$$
\mathcal S\subset H^1(\partial_i\mathcal O^3,\mathfrak{sl}_4(\mathbb R))
$$
with
$T^{Zar}_0\mathcal S= H^1(\partial_i\mathcal O^3,\mathfrak{sl}_4(\mathbb R))$,
that is an slice for the action by conjugation in the variety of representations.
This set is defined by the vanishing of an obstruction (Theorem~\ref{Theorem:Quadratic}),
so $\dim \mathcal S= \dim H^1(\partial_i\mathcal O^3,\mathfrak{sl}_4(\mathbb R))-1$. Furthermore,
the stabilizer of $\rho_i$ has dimension $1$, so by Corollary~\ref{Corollary:quotient},
$\dim X(\partial_i \mathcal O^3, \mathrm{SL}_4(\mathbb R))=\dim\mathcal S-1$.
\end{proof}

The following is another of the pieces in the proof of Theorem~\ref{Theorem:3dim}.

\begin{Proposition}
\label{Proposition:FuchsianSingular}
 Under the hypothesis of  Theorem~\ref{Theorem:3dim},
if some end of
$\mathrm{int}( \mathcal O^3 )$
is Fuchsian and   not a turnover,
then the character of $\rho$ is a singular point
of $X(\mathcal O^3, \mathrm{SL}_4(\mathbb R))$.
\end{Proposition}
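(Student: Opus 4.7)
}
The plan is to combine Proposition~\ref{Proposition:NFD} with the dimension formula of Lemma~\ref{Lemma:dimZTF}, show that the Zariski tangent dimension of $X(\mathcal O^3,\mathrm{SL}_4(\mathbb R))$ strictly drops at nearby representations, and conclude that $[\rho_0]$ cannot be a smooth point of the variety.

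First, by hypothesis some end of $\mathrm{int}(\mathcal O^3)$ is Fuchsian and not a turnover, so the integer $\mathsf f$ appearing in Lemma~\ref{Lemma:dimZTF} at the hyperbolic holonomy $\rho_0$ satisfies $\mathsf f_0\geq 1$, and the lemma gives
$$
\dim T^{\mathrm{Zar}}_{[\rho_0]}\, X(\mathcal O^3,\mathrm{SL}_4(\mathbb R))=\tfrac12\sum_i \dim X(\partial_i\mathcal O^3,\mathrm{SL}_4(\mathbb R))\big|_{(\rho_0)_i}+\mathsf f_0.
$$
Next, I would invoke Proposition~\ref{Proposition:NFD} to produce a representation $\rho\in\hom(\pi_1(\mathcal O^3),\mathrm{SO}_0(3,1))$ arbitrarily close to $\rho_0$ — in particular inside the neighborhood $U$ provided by Lemma~\ref{Lemma:dimZTF} — such that for every non-turnover boundary component the image $\rho(\pi_1(\partial_i\mathcal O^3))$ is not conjugate to a subgroup of $\mathrm{SO}(2,1)$. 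For such a $\rho$ the integer $\mathsf f_\rho$ vanishes, so Lemma~\ref{Lemma:dimZTF} yields
$$
\dim T^{\mathrm{Zar}}_{[\rho]}\, X(\mathcal O^3,\mathrm{SL}_4(\mathbb R))=\tfrac12\sum_i \dim X(\partial_i\mathcal O^3,\mathrm{SL}_4(\mathbb R))\big|_{\rho_i}.
$$

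The main technical step is to check that the terms $\dim X(\partial_i\mathcal O^3,\mathrm{SL}_4(\mathbb R))$ at $(\rho_0)_i$ and at $\rho_i$ coincide for each $i$. For Euclidean components the local dimension is constant, by the twisted Euler characteristic computation of \cite{PortiDim} used in Lemma~\ref{Lemma:dimZTF}. For components that are hyperbolic and already non-Fuchsian at $\rho_0$, the restriction $(\rho_0)_i$ is $\mathbb C$-irreducible by Lemma~\ref{Lemma:03irr}, so $X(\partial_i,\mathrm{SL}_4(\mathbb R))$ is smooth there by Proposition~\ref{Proposition:Goldman} and the local dimension is constant in a neighborhood. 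For the remaining components — hyperbolic, Fuchsian at $\rho_0$, not turnovers — Theorem~\ref{Theorem:orientable} provides a local model of $X(\partial_i,\mathrm{SL}_4(\mathbb R))$ near $[(\rho_0)_i]$ of the form $\mathbb R^{p+b}\times\mathrm{Cone}(\mathrm{UT}(S^{d-1}))$ (or its antipodal quotient when $n+1$ is odd); the cone with its apex removed is a smooth manifold of the same real dimension $p+b+2d-2$ as the full model, so any nearby non-Fuchsian character, which sits away from the apex, has the same local dimension as the Fuchsian one. Subtracting the two formulas then gives
$$
\dim T^{\mathrm{Zar}}_{[\rho_0]}\, X-\dim T^{\mathrm{Zar}}_{[\rho]}\, X=\mathsf f_0\geq 1.
$$

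Finally, if $[\rho_0]$ were a smooth point of $X(\mathcal O^3,\mathrm{SL}_4(\mathbb R))$, a neighborhood of $[\rho_0]$ would be an analytic manifold of dimension $\dim T^{\mathrm{Zar}}_{[\rho_0]}X$, and the Zariski tangent dimension would be locally constant on it. Since $[\rho]$ can be taken arbitrarily close to $[\rho_0]$ while its Zariski tangent dimension is strictly smaller, this is impossible, so $[\rho_0]$ is singular. The hardest part of this plan is the dimension identification for the formerly-Fuchsian non-turnover boundary components; once Theorem~\ref{Theorem:orientable} is at hand it reduces to the elementary observation that the cone on a closed manifold, with the apex removed, is a manifold of the same dimension as the cone itself.
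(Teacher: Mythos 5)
Your proof is correct and follows the same route as the paper: deform $\rho_0$ using Proposition~\ref{Proposition:NFD} into the neighborhood $U$ of Lemma~\ref{Lemma:dimZTF}, and conclude that the Zariski tangent dimension strictly drops because $\mathsf f$ drops. The paper's proof is a one-liner citing exactly these two results; the substantive work you add is the check that the summands $\dim X(\partial_i\mathcal O^3,\mathrm{SL}_4(\mathbb R))$ do not change under the deformation, which the paper leaves implicit. Your case analysis is sound: for Euclidean and already non-Fuchsian components the local dimension is constant, and for Fuchsian non-turnover components the local model from Theorem~\ref{Theorem:orientable} shows that nearby off-apex points of the cone have the same local dimension as the apex (note $d=t\geq 2$ for a closed orientable non-turnover hyperbolic 2-orbifold, so the cone is nondegenerate). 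An alternative way to see the same invariance, closer to the bookkeeping already in the proof of Lemma~\ref{Lemma:dimZTF}, is via the twisted Euler characteristic: passing from Fuchsian to nearby non-Fuchsian, $\dim H^0$ and $\dim H^2$ each drop by $1$, so $\dim H^1(\partial_i\mathcal O^3,\mathfrak{sl}_4(\mathbb R))$ drops by exactly $2$, cancelling the $+2$ correction in the Fuchsian case.
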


\begin{proof}
 By Proposition~\ref{Proposition:NFD} and Lemma~\ref{Lemma:dimZTF},
 $\rho$ can be deformed so that the dimension of the
 Zariski tangent space strictly decreases.
\end{proof}

\subsection{Quadratic singularities}
\label{Section:Quadratic}

The following concludes the proof of  Theorem~\ref{Theorem:3dim}.

\begin{Proposition}
\label{Proposition:Quadratic}
 The singularity is   quadratic.
\end{Proposition}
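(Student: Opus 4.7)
The plan is to upgrade the proof of Proposition~\ref{Proposition:Smoothness} by allowing the first (quadratic) obstruction to be nontrivial while showing that every higher-order obstruction still vanishes, by transferring the boundary results of Theorem~\ref{Theorem:Quadratic} and Lemma~\ref{Lemma:rho_iNF} via the isomorphism on $H^2$ established in Lemma~\ref{Lemma:resiso}(ii).

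First I would reduce to the variety of representations. Since $\rho$ is $\mathbb C$-irreducible by Lemma~\ref{Lemma:03irr}, Lemma~\ref{lemma:stabirr} gives $G_\rho=\mathcal Z(\mathrm{SL}_4(\mathbb R))=\{\pm I\}$, which is finite and acts trivially on cocycles. Hence Theorem~\ref{Theorem:Slice} and Corollary~\ref{Corollary:quotient} identify the germ of $X(\mathcal O^3,\mathrm{SL}_4(\mathbb R))$ at $[\rho]$ with the germ of the slice, and it suffices to show that the germ of $\hom(\pi_1(\mathcal O^3),\mathrm{SL}_4(\mathbb R))$ at $\rho$ is analytically equivalent to the quadratic cone
\[
\mathcal Q=\{\,z\in Z^1(\pi_1(\mathcal O^3),\mathfrak{sl}_4(\mathbb R))\mid [z\cup z]\sim 0\ \text{in}\ H^2(\mathcal O^3,\mathfrak{sl}_4(\mathbb R))\,\}.
\]

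The core step is an induction on $n\geq 1$: every $n$-jet $J^n_{0,\rho}$ whose leading term $z$ lies in $\mathcal Q$ extends to an $(n+1)$-jet. The case $n=1$ is the definition of $\mathcal Q$. For $n\geq 2$, Lemma~\ref{Lemma:GoldmanObstructions} produces an obstruction class $o_n\in H^2(\mathcal O^3,\mathfrak{sl}_4(\mathbb R))$. By naturality of Goldman's obstructions under the inclusions $\pi_1(\partial_i\mathcal O^3)\hookrightarrow\pi_1(\mathcal O^3)$, the restriction of $o_n$ to $H^2(\partial_i\mathcal O^3,\mathfrak{sl}_4(\mathbb R))$ is the obstruction of the restricted $n$-jet of $\rho_i$. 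Lemma~\ref{Lemma:resiso}(ii) asserts that the sum of restrictions is an isomorphism $H^2(\mathcal O^3,\mathfrak{sl}_4(\mathbb R))\xrightarrow{\cong} H^2(\partial\mathcal O^3,\mathfrak{sl}_4(\mathbb R))$, so vanishing of $o_n$ reduces to vanishing on each boundary component. For Euclidean ends, non-Fuchsian ends, and turnover ends, $\hom(\pi_1(\partial_i\mathcal O^3),\mathrm{SL}_4(\mathbb R))$ is smooth at $\rho_i$ (Lemma~\ref{Lemma:rho_iNF}), so every obstruction vanishes. For a Fuchsian end different from a turnover, Theorem~\ref{Theorem:Quadratic} applies: the germ of $\hom$ at $\rho_i$ is the quadratic cone, and functoriality of the cup product together with the isomorphism in Lemma~\ref{Lemma:resiso}(ii) ensure that $z\in\mathcal Q$ implies $[z|_{\partial_i}\cup z|_{\partial_i}]\sim 0$; hence the $n$-jet on the boundary lies in the quadratic cone and all its higher obstructions vanish too. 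Thus $o_n=0$ and the induction closes.

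Having shown that every $z\in\mathcal Q$ is tangent to a formal power series deformation of $\rho$, Artin's approximation theorem (used in the same way as in the proof of Proposition~\ref{Proposition:Smoothness}) replaces the formal deformation by a convergent analytic one with the same tangent cocycle. Combined with the fact that $\mathcal Q$ contains the Zariski tangent cone at $\rho$, this identifies the analytic germ of $\hom(\pi_1(\mathcal O^3),\mathrm{SL}_4(\mathbb R))$ at $\rho$ with $\mathcal Q$, and hence the germ of $X(\mathcal O^3,\mathrm{SL}_4(\mathbb R))$ at $[\rho]$ with its quotient by the finite center, so the singularity is quadratic. The delicate point to verify carefully is the naturality of the higher Goldman obstructions under restriction to the peripheral subgroups; once this is in hand, Lemma~\ref{Lemma:resiso}(ii) does all the work of transporting the Goldman--Millson quadraticity on each boundary component to the whole orbifold.
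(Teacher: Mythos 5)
The overall strategy you propose — reduce to the boundary using Lemma~\ref{Lemma:resiso}(ii), invoke naturality of Goldman's obstructions, and feed in quadraticity of the boundary varieties — is the right one, and indeed matches the paper's approach in spirit. However, there is a genuine gap in the core inductive step, precisely for the Fuchsian boundary components that are not turnovers.

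You claim: ``hence the $n$-jet on the boundary lies in the quadratic cone and all its higher obstructions vanish too.'' This does not follow from Theorem~\ref{Theorem:Quadratic}. Knowing that the analytic germ of $\hom(\pi_1(\partial_i\mathcal{O}^3),\mathrm{SL}_4(\mathbb{R}))$ at $\rho_i$ is equivalent to the quadratic cone $\{w\mid [w\cup w]\sim 0\}$ does \emph{not} imply that every $n$-jet of that germ extends to an $(n+1)$-jet. A basic counterexample makes this clear: for the quadratic cone $\{xy=0\}\subset\mathbb{R}^2$ the $2$-jet $\gamma(t)=(t,t^2)$ satisfies $\gamma_x(t)\gamma_y(t)=t^3\equiv 0\pmod{t^3}$, so it is a valid $2$-jet of the cone, but the coefficient of $t^3$ in $\gamma_x\gamma_y$ equals $1$ regardless of what third-order term is appended, so the $2$-jet does not extend. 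Thus the statement ``every $n$-jet whose leading term $z$ lies in $\mathcal{Q}$ extends to an $(n+1)$-jet'' is false in general, and your induction cannot close without more information. The naturality of higher obstructions under restriction, which you flag as ``the delicate point,'' is in fact the standard and unproblematic part; the genuine difficulty you did not resolve is showing that the obstruction of the \emph{boundary} $n$-jet actually vanishes.

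The paper's proof avoids this gap by proving a different statement. Instead of lifting jets with an arbitrary leading term in $\mathcal{Q}$, it builds an analytic immersion $\phi\colon M\to\prod_i M_i$ of a smooth model of the slice into smooth models of the boundary representation varieties, and then proves by contradiction that $\phi(\mathcal{S})=\phi(M)\cap\prod_i\hom(\pi_1(\partial_i\mathcal{O}^3),\mathrm{SL}_4(\mathbb{C}))$. In the jet-level step of that argument, the $(k+1)$-jet $c$ on the boundary \emph{is assumed to exist} (it is a jet of the right-hand side), so the vanishing of the obstruction to extending the boundary $k$-truncation is immediate — not because the boundary germ is a quadratic cone, but because one already has the extension in hand. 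Lemma~\ref{Lemma:resiso}(ii) then transports that vanishing back to $H^2(\mathcal{O}^3,\mathfrak{sl}_4(\mathbb{C}))$ and allows the interior $k$-jet to be extended, after a final conjugation to bring it back into the slice. Quadraticity then follows because the intersection with $\prod_i\hom$, which on each factor is either smooth or quadratic by Goldman--Millson, is cut out of the smooth manifold $\phi(M)$ by quadratic equations. If you want to repair your argument in the spirit you proposed, you would need to replace ``all higher obstructions vanish on the boundary quadratic cone'' by an argument that uses the \emph{existence} of a boundary extension (as the paper does), or alternatively invoke a genuinely stronger form of Goldman--Millson (formality of the controlling DGLA together with a formality statement for the restriction map) rather than the mere quadraticity of the analytic germ stated in Theorem~\ref{Theorem:Quadratic}.
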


\begin{proof}
To avoid technicalities, we work with the complexification,
namely the variety or representations of $\pi_1(\mathcal O^3)$ in
$\mathrm{SL}_4(\mathbb C)$ instead of
$\mathrm{SL}_4(\mathbb R)$. This is sufficient
to get quadratic singularities, by \cite[\S 3.3]{GoldmanMillson}.

As before, let
$$
\partial \mathcal O^3= \partial_1\mathcal O^3_1\cup \cdots
\cup  \partial_k\mathcal O^3
$$
denote the decomposition into connected components of the
boundary, and let
$\rho \colon \pi_1( \mathcal O^3)\to \mathrm{SO}(3,1)
\subset
\mathrm{SL}_4(\mathbb R)
$
denote the holonomy of the hyperbolic structure of  $\mathcal O^3$ and
$\rho_i\colon \pi_1(\partial_i \mathcal O^3)\to \mathrm{SL}_4(\mathbb R)$,
its restriction to $\partial_i \mathcal O^3$.

By Lemma~\ref{Lemma:03irr} $\rho$ is $\mathbb C$-irreducible,
and by Theorem~\ref{Theorem:slices}
there exists an analytic slice $\mathcal S$ at $\rho$ that satisfies:
$$T_\rho^{\mathrm{Zar}} \mathcal S\oplus B^1(\pi_1(\mathcal O^3),
\mathfrak{sl}_4(\mathbb C))=
Z^1(\pi_1(\mathcal O^3),
\mathfrak{sl}_4(\mathbb C)),
$$
hence $
T_\rho^{\mathrm{Zar}} \mathcal S\cong H^1(\pi_1(\mathcal O^3),
\mathfrak{sl}_4(\mathbb C))
$.
By \cite[Theorem~V.A.14]{GunningRossi}
any analytic subvariety of $\mathbb C^N$ is equivalent to a subvariety of a (non-singular)
$\mathbb C$-submanifold that has the same dimension  as the Zariski tangent space
of the analytic germ. So
there exist germs of \emph{non-singular}
$\mathbb C$-manifolds  $M$, $M_1,\ldots,M_k$, such that, after taking
a  small enough slice $\rho\in \mathcal S$ and choosing neighborhoods 
$ U_i\subset  \hom(\pi_1(\partial_ i\mathcal O^3),\mathrm{SL}_4(\mathbb C))$ 
of $\rho_i$
we have:
\begin{align*}
\mathcal S\subset M,  & \qquad T_\rho^{\mathrm{Zar}} \mathcal S=T_\rho M,  \\
  U_i\subset M_i, & \qquad T_{\rho\vert _{\pi_1\partial_ i\mathcal O^3} } \hom(\pi_1(\partial_ i\mathcal O^3),\mathrm{SL}_4(\mathbb C)) =T_{\rho\vert _{\pi_1\partial_ i\mathcal O^3}} M_i.
\end{align*}
The restriction map
$$
\mathcal
S\to \prod_i U_i\subset\prod_i \hom(\pi_1(\partial_ i\mathcal O^3),\mathrm{SL}_4(\mathbb C))
$$
is analytic, hence it extends to 
\begin{equation}
\label{eqn:phi}
\phi\colon M\to \prod_i M_i
\end{equation}
and since the restriction induces an injection
$$
0\to H^1(\mathcal O^3,\mathfrak{sl}_4(\mathbb C))
\to
\bigoplus_i
H^1(\partial_ i\mathcal O^3,\mathfrak{sl}_4(\mathbb C) )
$$
the map $\phi \colon M\to \prod_i M_i$  in \eqref{eqn:phi} is an
analytic immersion. The  following lemma implies that the singularity of $\mathcal S$ at $\rho$ is quadratic,
as each  $\hom(\pi_1(\partial_ i\mathcal O^3),\mathrm{SL}_4(\mathbb C))$ (or $U_i$)
is quadratic in $M_i$:

\begin{Lemma}
  $\phi(\mathcal S)=\phi(M)\cap \prod_i \hom(\pi_1(\partial_ i\mathcal O^3),\mathrm{SL}_4(\mathbb C))$.
\end{Lemma}

%
%

\begin{proof}[Proof of the lemma]
By construction we have the inclusion
 $$\phi(\mathcal S)\subseteq\phi(M)\cap \prod_i \hom(\pi_1(\partial_ i\mathcal O^3),\mathrm{SL}_4(\mathbb C))$$
and we prove equality
by contradiction. Assume
$$\phi(\mathcal S)\subsetneq\phi(M)\cap \prod_i \hom(\pi_1(\partial_ i\mathcal O^3),\mathrm{SL}_4(\mathbb C)).$$
Then for some $k\in\mathbb N$ there exists a $(k+1)$-jet in 
$$
c=c_0+c_1 t+\cdots + c_{k+1} t^{k+1}
\in J^{k+1}_{0,\phi(\rho)}\big(\mathbb R,   \phi(M)\cap \prod_i \hom(\pi_1(\partial_ i\mathcal O^3),\mathrm{SL}_4(\mathbb C) )\big)
$$
such that
$c$ is not a $(k+1)$-jet of $\phi(\mathcal{S})$,
$c \not\in
 J^{k+1}_{0,\phi(\rho)}(\mathbb R,   \phi(\mathcal{S}))$,
but
its $k$-th truncation is:
$$
[c]_k=c_0+c_1 t+\cdots + c_{k} t^{k}
\in J^{k}_{0,\phi(\rho)}(\mathbb R,   \phi(\mathcal{S})).
$$
Here we use local coordinates of the analytic variety $\prod_iM_i$ in an open
subset of $\mathbb{C}^N$, so that $c_0,\ldots,c_{k+1}\in\mathbb C^N$.

Since we assume that $[c]_k\in  J^{k}_{0,\phi(\rho)}(\mathbb R,   \phi(\mathcal{S}))$,  
we may write $[c]_k=\phi( \rho_k)$ 
for some $k$-jet $\rho_k\in J^k_{0, \rho}(\mathbb R, \mathcal S)$, in  $\mathcal S$,
that in its turn we write as
$$
\rho_k=\exp(a_1 t+\cdots a_k t^k)\rho \in J^{k+1}_{0,\rho} (\mathbb R, \mathcal S)
$$
for some maps (or 1-cochains) $a_i\colon\pi_1(\mathcal O^3) \to \mathfrak{sl}_4(\mathbb C)$.
By Lemma~\ref{Lemma:GoldmanObstructions},
the obstruction to extend $\rho_k$ to a $(k+1)$-jet
is a cohomology class in $H^2(\mathcal O^3,\mathfrak{sl}_4(\mathbb C))$.
By  Lemma~\ref{Lemma:resiso} the restriction map induces an isomorphism
$H^2(\mathcal O^3,\mathfrak{sl}_4(\mathbb C))\cong H^2(\partial\mathcal O^3,\mathfrak{sl}_4(\mathbb C))$.
Thus, by naturality of the obstruction and since $[c]_k$ is the truncation of the $(k+1)$-jet $c$
(in the product of varieties of representations of the components $\partial_i\mathcal O^3$), this obstruction vanishes
in $H^2(\mathcal{O}^3,\mathfrak{sl}(4,\mathbb C))$.
Therefore $\rho_k$ is the $k$-truncation of a $(k+1)$-jet
in the variety of representations:
$$
\rho_{k+1}=\exp(a_1 t+\cdots a_k t^k+a_{k+1} t^{k+1} ) \rho \in J^{k+1}_{0,\rho}
\big(\mathbb R, \hom(\pi_1 \mathcal O^3, \mathrm{SL}_4(\mathbb C) )\big).
$$
Next we want to conjugate $\rho_{k+1}$ so that the result is not only a jet in
$ \hom(\pi_1 \mathcal O^3, \mathrm{SL}_4(\mathbb C) )$ but in
the slice $\mathcal S$. For that purpose we use
Theorem~\ref{Theorem:slices}~(ii):
a neighborhood of $\rho$ in $ \hom(\pi_1 \mathcal O^3, \mathrm{SL}_4(\mathbb C) )$
is obtained by conjugating
the slice $\mathcal S$ by a neighborhood of the origin in $\mathrm{SL}_4(\mathbb C) $. Thus we write
$$
\rho_{k+1}=\operatorname{Ad}_{\exp (t^{k+1} b)} \rho_{k+1}'
$$
with $b\in\mathfrak{sl}_4(\mathbb C)$ and $\rho_{k+1}'   $  a jet in the slice $\mathcal S$
(whose $k$ truncation is $\rho_k$):
$$
\rho_{k+1}'=
\exp(a_1 t+\cdots a_k t^k+a_{k+1}' t^{k+1} ) \rho \in J^{k+1}_{0,\rho} (\mathcal{S}) .
$$
%
Notice that $\phi(\rho_{k+1}')$ and $c$ are two $(k+1)$-jets
in $\phi(M)$
whose $k$-truncations
are the same, hence their difference is $t^{k+1} v$ for some $v\in T_ {\phi(\rho)}\phi(M)$, namely
$v=\phi_*(u)$ for some $u\in T_\rho M$. As $T_\rho M=T_\rho S$, we use $u$   to modify the
$(k+1)$-the term of $\rho_{k+1}'$ and define:
$$
\rho_{k+1}''=\exp(t^{k+1} u)\rho_{k+1}'=\exp(a_1 t+\cdots a_k t^k+(a_{k+1}'+u) t^{k+1})\rho \in J^{k+1}_{0,\rho} (\mathcal{S}) .
$$
This $(k+1)$-jet satisfies $\phi( \rho_{k+1}'')= c$, hence a contradiction.
\end{proof}

By the properties of the slice in Theorem~\ref{Theorem:slices}, the singularity at the varieties of characters is quadratic.
\end{proof}

\begin{small}
\noindent \textsc{Departament de Matem\`atiques,  Universitat Aut\`onoma de Barcelona,
and Centre de Recerca Matem\`atica (UAB-CRM)\\
08193 Cerdanyola del Vall\`es, Spain }

\noindent \textsf{joan.porti@uab.cat}
\end{small}

\end{document}